\documentclass[final,onefignum,onetabnum]{siamart220329}
\usepackage{times}
\usepackage{graphicx}
\usepackage{amssymb}
\usepackage{amsmath}
\usepackage{mathtools}
\usepackage{stanli}
\usepackage{float}
\usepackage{subcaption}
\usepackage{pgfplots}
\usepgfplotslibrary{fillbetween}
\usepackage{color}
\usepackage{bm}
\usepackage{hyperref}
\usepackage[nameinlink,capitalize]{cleveref}
\usepackage{ulem}
\usepackage[mathlines]{lineno}

\headers{Global weight opt. of frames under free vibrations}{M. Tyburec, M. Ko\v{c}vara, M. Handa, and J. Zeman}

\title{Global weight optimization of frame structures under free-vibration eigenvalue constraints\thanks{Submitted to the editors \today.
\funding{This work was funded by the Czech Science Foundation~(project No. 22-15524S) {and, from January 2025, co-funded by the European Union under the ROBOPROX project~(reg. no. CZ.02.01.01/00/22 008/000459)}.}}}
\author{Marek Tyburec\thanks{Faculty of Civil Engineering, Czech Technical University in Prague, Prague, CZ and Institute of Information Theory and Automation, Czech Academy of Sciences, Prague, CZ (\email{marek.tyburec@cvut.cz}, \url{http://mech.fsv.cvut.cz/\string~tyburec/}).} \and Michal Ko\v{c}vara\thanks{Institute of Information Theory and Automation, Czech Academy of Sciences, Prague, CZ and School of Mathematics, University of Birmingham, Birmingham, UK (\email{m.kocvara@bham.ac.uk}).} \and Marouan Handa\thanks{Institute of Information Theory and Automation, Czech Academy of Sciences, Prague, CZ 
  (\email{handa@utia.cas.cz}).} \and Jan Zeman\thanks{Faculty of Civil Engineering, Czech Technical University in Prague, Prague, CZ {and Nečas Center for Mathematical Modeling, Prague, CZ} (\email{jan.zeman@cvut.cz}).} }

\DeclareMathOperator*{\argmin}{arg\,min}

\makeatletter
\@ifundefined{proof}{%
  \providecommand{\qedsymbol}{\ensuremath{\square}}
  \theorembodyfont{\normalfont}
  \theoremsymbol{\qedsymbol}
  \newtheorem*{proof}{Proof}%
}{}
\makeatother

\newcommand*\squared[1]{\tikz[baseline=(char.base)]{
		\node[shape=rectangle,draw,inner sep=0pt, minimum size=4mm] (char) {#1};}}

\newtheorem{assumption}{Assumption}

\crefname{theorem}{Theorem}{Theorems}
\crefname{lemma}{Lemma}{Lemmas}
\crefname{proposition}{Proposition}{Propositions}
\crefname{corollary}{Corollary}{Corollaries}
\crefname{definition}{Definition}{Definitions}
\crefname{assumption}{Assumption}{Assumptions}
\crefname{remark}{Remark}{Remarks}

\begin{document}

	
\maketitle

 
\begin{abstract}
Topology optimization of frame structures under free-vibration eigenvalue constraints constitutes a challenging nonconvex polynomial optimization problem with disconnected feasible sets. In this article, we first formulate it as a polynomial semidefinite programming problem (SDP) of minimizing a linear function over a basic semi-algebraic feasible set. We then propose to solve this problem by Lasserre hierarchy of linear semidefinite relaxations providing a sequence of increasing lower bounds. To obtain also a sequence of upper bounds and thus conditions on global $\varepsilon$-optimality, we provide a bilevel reformulation that exhibits a special structure: The lower level is quasiconvex univariate and {it has a non-empty interior if} the constraints of the upper-level problem {are satisfied}. After deriving the conditions for the solvability of the lower-level problem, we thus provide a way to construct feasible points to the original SDP. Using such a feasible point, we modify the original nonlinear SDP to satisfy the conditions for the deployment of the Lasserre hierarchy. Solving arbitrary degree relaxation of the hierarchy, we prove that scaled first-order moments associated with the problem variables satisfy feasibility conditions for the lower-level problem and thus provide guaranteed upper and lower bounds on the objective function. Using these bounds, we develop a simple sufficient condition for global $\varepsilon$-optimality and prove that the optimality gap $\varepsilon$ converges to zero if the set of global minimizers is convex. Finally, we illustrate these results with {four} representative problems for which the hierarchy converges in at most {five} relaxation degrees.
\end{abstract}

\begin{keywords}
	Frame structures, fundamental free-vibration eigenvalue constraint, semidefinite programming, polynomial optimization, Lasserre hierarchy, global optimization
\end{keywords}

\begin{MSCcodes}
	74P05, 90C22, 90C23, 90C46
\end{MSCcodes}
	
\section{Introduction}

Topology optimization of frame structures is a classic problem in structural design. It generalizes the optimal design of truss structures, for which various convex reformulations have been developed \cite{Achtziger2008,Kocvara2017} by involving a polynomial dependence of the equilibrium equation on the design variables. Consequently, optimization of frame structures is a challenging nonconvex problem for which mostly local optimization algorithms have been used so far \cite{Yu2004,Yamada2015}.

A common goal of structural designers is to minimize structural weight while meeting performance requirements, which in this article include upper bounds on work performed by external forces (compliance), and lower bounds on the lowest nonzero free-vibration eigenfrequency.  These constraints are industrially significant and are among the most common constraints in topology optimization \cite{Bendsoe2004,Tyburec2019}. This optimization problem can be formulated as a semidefinite program \cite{Achtziger2007}
\vspace{-2mm}\begin{subequations}\label{eq:opt1}
\begin{align}
	\min_{\mathbf{a}\in \mathbb{R}^{n_{\mathrm{e}}}}\; & \sum_{e=1}^{n_\mathrm{e} } \rho_e \ell_e a_e\label{eq:opt_objective1}\\
	\text{s.t.}\; & \mathbf{K} (\mathbf{a}) - \underline{\lambda} \mathbf{M}(\mathbf{a}) \succeq 0,\label{eq:opt_fv1}\\
	& \begin{pmatrix}
		\overline{c} & -\mathbf{f}^\mathrm{T}\\
		-\mathbf{f} & \mathbf{K}(\mathbf{a})
	\end{pmatrix} \succeq 0,\label{eq:opt_static1}\\
	& \mathbf{a} \ge \mathbf{0},\label{eq:opt_nonneg1}
\end{align}
\end{subequations}
in which $\mathbf{a} \in \mathbb{R}^{n_\mathrm{e}}_{\ge 0}$ are the to-be-optimized cross-section areas of the $n_\mathrm{e} \in \mathbb{N}$ elements, $\bm{\ell} \in \mathbb{R}^{n_\mathrm{e}}_{>0}$ and $\bm{\rho} \in \mathbb{R}^{n_\mathrm{e}}_{>0}$ are the element lengths and material densities, respectively; hence the objective function \eqref{eq:opt_objective1} represents the weight of the structure. Furthermore, $\mathbf{K}(\mathbf{a}) \in \mathbb{S}^{n_\mathrm{dof}}_{\succeq0}$ with $\mathbf{M}(\mathbf{a}) \in \mathbb{S}^{n_\mathrm{dof}}_{\succeq 0}$ denote symmetric positive semidefinite stiffness and mass matrices with the latter matrix being a linear function of $\mathbf{a}$, and $n_\mathrm{dof} \in \mathbb{N}$ is the number of degrees of freedom. Thus, \eqref{eq:opt_static1} expresses the static equilibrium together with the upper bound on compliance and \eqref{eq:opt_fv1} with $\underline{\lambda} \in \mathbb{R}_{>0}$ {denoting} the lower bound on the lowest eigenfrequency.

This problem setting has already been thoroughly investigated for truss structures. In this case, the structural response incorporates only membrane stiffness effects, so that the stiffness matrix $\mathbf{K}(\mathbf{a})$ is a linear function of the cross-section areas. Therefore, \eqref{eq:opt1} is a {smooth and} convex linear semidefinite program \cite{Achtziger2007,Achtziger2008}, which overcomes the difficulty of nondifferentiability associated with multiple eigenvalues and rank drops of the stiffness matrix when some of the elements vanish through a zero cross-sectional area.

In optimal design of frames, we have to also consider the bending effects of frame elements, such as the Euler-Bernoulli elements used in this work. Then $\mathbf{K}(\mathbf{a})$ becomes a second- or third-degree polynomial function of the cross-section areas {(see Appendix~\ref{app:polydep})}. Due to this, the frame optimization problem becomes a {\it nonlinear non-convex semidefinite optimization problem that needs to be solved to global optimality.}

In this article, we propose to solve the problem by techniques of global polynomial optimization, namely the moment-sum-of-squares hierarchy due to Lasserre~\cite{Lasserre2001} and Parrilo~\cite{Parrilo2003} and its extension to polynomial matrix inequalities \cite{Henrion2006}. This hierarchy of Lasserre relaxations provides a monotonic sequence of lower bounds that eventually converge to the optimum in a finite number of steps. The lower bounds are solutions to linear semidefinite optimization problems (SDP relaxations) of quickly growing dimension. The dimension makes higher-order relaxation (typically of order higher than two or three) highly impractical to be solved numerically. Moreover, it is well known that, from the solution of the relaxation alone, it is impossible to know how close it is to the global solution of the original problem. We therefore propose a novel technique to obtain, in line with the lower bounds, also upper bounds to the optimal objective value. This allows us to guarantee global $\varepsilon$-optimality of the obtained solutions.

{This idea has been already investigated for compliance optimization problems under weight constraints in \cite{Tyburec2021}, where the feasible upper bounds were constructed by evaluating structural compliance with the first-order moments serving as the expected cross-sectional areas. The same procedure, however, cannot be used for weight optimization problems.}

The principal idea is as follows. We solve a (low-order) Lasserre relaxation of the problem \eqref{eq:opt1} to get a solution $\mathbf{a}$ that is, by nature, infeasible in \eqref{eq:opt1}, unless the relaxation is already optimal. We show that there is a multiplier $\delta\in\mathbb{R}$ such that $\delta\mathbf{a}$ is feasible in \eqref{eq:opt1}. We then solve a univariate problem of minimizing $\delta$ to remain feasible in \eqref{eq:opt1}. This feasible point will then give us the requested upper bound. For this purpose, we formally rewrite \eqref{eq:opt1} as a bilevel optimization problem with a lower-level variable $\delta$. We stress that we never solve this bilevel problem as such, we only use this construction to obtain the upper bounds.

This approach has been investigated in~\cite{Tyburec2023} for frame optimization problem without vibration constraint. {However, without further research it is not clear whether and how the method can be extended to the problem with free-vibration eigenvalue constraints \eqref{eq:opt_fv1}.} 

The new constraint makes the problem much more challenging. It is not only non-convex, but its feasible set is generally disconnected due to the singularity phenomenon of resonating thin elements \cite{Ni2014}, which generally results in the feasible space of the cross-section areas being the union of zero and a closed positive interval (see Section \ref{sec:illustrative} for a visual illustration). Because of this, common local techniques for topology optimization that rely on invertibility of a (dynamic) stiffness matrix fail due to the cut-off of the disconnected domains. 
The formulation \eqref{eq:opt1} was already exploited in the work of Yamada and Kanno \cite{Yamada2015}, who proposed a sequence of semidefinite programming relaxations that connected the feasible set. With an increasing value of the relaxation parameter, the method converges to locally-optimal solutions of high quality. However, we are not aware of any published results that solved {instances of \eqref{eq:opt1}} globally.




The paper is organized as follows.
Starting by recalling the necessary background on the moment-sum-of-squares hierarchy in Section \ref{sec:background}, we first formalize the optimization problem as a nonlinear semidefinite program in Section \ref{sec:formulation}. We further establish its bilevel reformulation, as explained above: on the lower level is a quasiconvex problem optimizing the scaling $\delta$ of fixed cross-section areas $\mathbf{a}$ under the free-vibration and compliance constraints, while the upper-level problem takes care of the feasibility of the scaled areas $\delta\mathbf{a}$.
This structure secures that all feasible points for the lower-level problem are also feasible points for the original nonlinear semidefinite programming problem. Next, we focus on the lower-level problem and establish the conditions on the fixed cross-section areas $\mathbf{a}$ under which the problem is solvable; Section \ref{sec:scalar}. It turns out that these conditions are semidefinite representable, and thus a feasible point for the lower-level, and consequently to the original nonlinear semidefinite problem, is provided at the cost of solving a single linear semidefinite program.  

Section \ref{sec:msos} deals with the solution to the main (single-level) nonlinear semidefinite programming problem using the moment-sum-of-squares hierarchy. From the existence of a feasible solution we obtain bounds on the optimization variables and show that the algebraic compactness condition needed for the convergence of the hierarchy is satisfied. We then solve the hierarchy of semidefinite relaxations to obtain a monotonic sequence of lower bounds to the original problem. Furthermore, we prove that the optimal solutions of the relaxations also satisfy the conditions on the cross-section areas $\mathbf{a}$ for the existence of a solution to the lower-level problem in the bilevel formulation. Consequently, for each relaxation we can construct a feasible upper bound to the original problem by globally solving the univarate quasi-convex lower-level problem that searches for the minimal scaling $\delta$. By comparing the lower and upper bounds, we obtain a certificate of global $\varepsilon$-optimality for the feasible upper bounds. Finally, we prove that $\varepsilon\rightarrow0$ for optimization problems with a convex set of global minimizers. With this, we obtain a sufficient condition of global optimality that is computationally inexpensive, assesses the quality of relaxations, and complements the Curto-Fialkow flat extension theorem \cite{Curto1996} for the optimization problem being considered.

We illustrate these theoretical developments on a set of {four} numerical examples, which highlight the challenges present in the weight optimization problems of frame structures under vibration constraints together with the benefits of our method: finite convergence, feasible designs at any relaxation degree, and a guarantee of global $\varepsilon$-optimality.

\newpage\section{Background: Moment-sum-of-squares hierarchy}\label{sec:background}

In this section, we summarize the basic results related to the optimization of polynomial functions over a~basic semialgebraic feasible set using the Lasserre moment-sum-of-squares hierarchy. Such optimization problems are formalized as
\begin{subequations}\label{eq:opt_general}
\begin{align}
	\min_{\mathbf{x} \in \mathbb{R}^n}\; & f(\mathbf{x})\\
	\mathrm{s.t.}\; & \mathbf{G}(\mathbf{x}) \succeq 0,\label{eq:pmi}
\end{align}
\end{subequations}
with $\mathbf{G}(\mathbf{x}) \in \mathbb{S}^m_{\succeq 0}$, where $\mathbb{S}^{m}$ denotes the space of $m\times m$ real symmetric matrices and $\mathbb{S}^{m}_{\succeq 0} := \left\{\mathbf{G}\in\mathbb{S}^{m} : \mathbf{G}\succeq 0\right\}$ the set of positive semidefinite matrices (with $\mathbf{G}\succeq 0$ meaning that $\mathbf{G}$ is positive semidefinite). Furthermore, $f(\mathbf{x}): \mathbb{R}^n \mapsto \mathbb{R}$ and $\mathbf{G}(\mathbf{x}): \mathbb{R}^{n}\mapsto \mathbb{S}^{m}$ are real polynomial mappings, so that \eqref{eq:pmi} is a polynomial matrix inequality. In what follows, we denote the feasible set of \eqref{eq:pmi} by $\mathcal{K}(\mathbf{G}(\mathbf{x}))$.

{In general, the optimization problem \eqref{eq:opt_general} is nonconvex and difficult to solve, covering $\mathcal{NP}$-hard problems such as binary programming and nonconvex quadratic programming. Its main challenge lies in certifying that $f(\mathbf{x}) - \lambda \geq 0$ for all $\mathbf{x} \in \mathcal{K}(\mathbf{G}(\mathbf{x}))$ for the largest possible $\lambda$. This requires showing that $(f(\mathbf{x})-\lambda)$ belongs to the cone of polynomials that are nonnegative on $\mathcal{K}(\mathbf{G}(\mathbf{x}))$.}

Consider a polynomial $f(\mathbf{x})$ of a degree $d$, which can be written as $f(\mathbf{x}) = \mathbf{p}^\mathrm{T}\mathbf{b}_d(\mathbf{x})$, where
\begin{equation}
\begin{aligned}
\mathbf{b}_d(\mathbf{x})=&\Big( 1 \quad x_1 \quad x_2 \quad \cdots \quad x_1^2 \quad x_1x_2 \quad  \cdots  \quad x_1x_n \\ &  x_2^2 \quad x_2x_3 \quad  \cdots \quad x_n^2 \quad \cdots \quad x_1^d \quad \cdots \quad x_n^d \Big)^\mathrm{T}
\end{aligned}
\label{strandard_basis}
\end{equation}
denotes the canonical monomial basis of degree up to $d$ and $\mathbf{p} \in \mathbb{R}^{|\mathbf{b}_d(\mathbf{x})|}$ is the corresponding coefficient vector. To certify polynomial nonnegativity, we rely on the Sum-Of-Squares (SOS) representation
\begin{definition}\label{def:sos}
    A matrix polynomial $\mathbf{\Sigma}(\mathbf{x}): \mathbb{R}^n\mapsto \mathbb{S}^m$ is SOS if there exist matrices $\mathbf{H}_i \in \mathbb{R}^{m\times |\mathbf{b}_d(\mathbf{x})|}$, $i = \{1,\ldots,\ell\}$, such that 
    $
        \mathbf{\Sigma}(\mathbf{x}) = \sum_{i=1}^{\ell} \mathbf{H}_i \mathbf{b}_d(\mathbf{x}) \mathbf{b}_d(\mathbf{x})^\mathrm{T} \mathbf{H}_i^\mathrm{T}.
    $
\end{definition}
We note that this reduces to the scalar case by setting $m=1$.

If the polynomial $(f{(\mathbf{x})}-\lambda)$ can be written as SOS on $\mathcal{K}(\mathbf{G}(\mathbf{x}))$, it is automatically nonnegative, {ensuring $f(\mathbf{x}) \geq \lambda$ for all $\mathbf{x}$. However, not every nonnegative polynomial can be written as SOS and we need to take into account the feasible set.}

{
The question of when SOS can characterize polynomial nonnegativity on basic semi-algebraic sets has been answered by Putinar \cite{putinar1993positive}. If $\mathcal{K}(\mathbf{G}(\mathbf{x}))$ satisfies the following assumption, then SOS representations can characterize polynomial nonnegativity:
\begin{assumption}[Archimedean assumption]\label{as:archimedean}
    There exist SOS polynomials $p_0(\mathbf{x})$ and $\bm{\Sigma}(\mathbf{x})$ such that the superlevel set 
    $\bigl\{\mathbf{x}\in\mathbb{R}^{n}\;\vert\; p_0(\mathbf{x}) + \langle \bm{\Sigma}(\mathbf{x}), \mathbf{G}(\mathbf{x})\rangle\ge 0\bigr\}$ 
    is compact, where $\langle \cdot, \cdot\rangle$ denotes the Frobenius inner product.
\end{assumption}
Under this assumption, polynomial nonnegativity on $\mathcal{K}(\mathbf{G}(\mathbf{x}))$ can be certified by expressing $(f(\mathbf{x})-\lambda)$ as $p_0(\mathbf{x}) + \langle \bm{\Sigma}(\mathbf{x}), \mathbf{G}(\mathbf{x})\rangle$ with SOS polynomials $p_0(\mathbf{x})$ and $\bm{\Sigma}(\mathbf{x})$ of \textit{a priori} unknown degrees. However, finding such SOS representations does not provide us with the actual minimizers $\mathbf{x}^*$. To avoid this, we work with the dual formulation using moments, which transforms the infinite-dimensional SOS problem into a hierarchy of finite-dimensional semidefinite programs.}

{To construct these truncations, we introduce moment variables $\mathbf{y} \in \mathbb{R}^{|\mathbf{b}_{2r}(\mathbf{x})|}$ indexed by the monomial basis $\mathbf{b}_{2r}(\mathbf{x})$. Further, let us define} the Riesz functional $L_\mathbf{y} (\mathbf{Z}(\mathbf{x}))$ that linearizes {any} polynomial matrix $\mathbf{Z}(\mathbf{x})$ using $\mathbf{y}$. Specifically, the operator is applied entry-wise and for the $(i,j)$-th component of $\mathbf{Z}$ we write
\begin{equation}
    Z_{i,j}(\mathbf{x}) = \mathbf{p}_{{{i,j}}}^\mathrm{T} \mathbf{b}_{2r}(\mathbf{x})\quad \Longleftrightarrow \quad L_\mathbf{y}(Z_{i,j}) = \mathbf{p}_{{{i,j}}}^\mathrm{T} \mathbf{y}\; \text{ with }\mathbf{y}\text{ indexed in }\mathbf{b}_{2r}(\mathbf{x})
\end{equation}
with a coefficient vector $\mathbf{p}_{{{i,j}}}$. In this manuscript, we further adopt the notation $y_{x_i^d}$ to express the component of $\mathbf{y}$ belonging to $x_i^d \in \mathbf{b}_{2r}(\mathbf{x})$.

{
Under the Archimedean assumption, polynomial optimization can be solved via the moment-sum-of-squares hierarchy:
\begin{subequations}\label{eq:general_moment}
\begin{align}
    \min_{\mathbf{y}} \; & L_\mathbf{y}(f(\mathbf{x}))\\
    \mathrm{s.t.}\; & L_{\mathbf{y}}\bigl(\mathbf{b}_{r-d_\mathrm{G}}(\mathbf{x}) \mathbf{b}_{r-d_\mathrm{G}}(\mathbf{x})^\mathrm{T} \otimes \mathbf{G}(\mathbf{x})\bigr) \succeq 0, & \text{(localizing matrix)}\label{eq:localizing}\\
    & L_{\mathbf{y}}\bigl(\mathbf{b}_r(\mathbf{x}) \mathbf{b}_r(\mathbf{x})^\mathrm{T}\bigr) \succeq 0, & \text{(moment matrix)}\label{eq:moment}
\end{align}
\end{subequations}
where $d_\mathrm{G}= \lceil \mathrm{deg}(\mathbf{G}(\mathbf{x}))/2 \rceil$ (with $\lceil \bullet \rceil$ denoting the ceiling function), and $\otimes$ denotes the Kronecker product. The constraints define moment matrices $\mathbf{M}_r(\mathbf{y})$ and localizing matrices $\mathbf{M}_{r-d_\mathrm{G}} (\mathbf{G}\mathbf{y})$ that correspond to the SOS conditions in the dual formulation.}

{Although \eqref{eq:general_moment} is convex in the moments $\mathbf{y}$, it is computationally intractable for $r\rightarrow \infty$. Instead, we solve a hierarchy of finite-dimensional truncations, starting with relaxation order $r = \max\left\{d_\mathrm{G}; \left\lceil\mathrm{deg}(f(\mathbf{x}))/2\right\rceil\right\}$ and increasing until convergence occurs. Since constraints of relaxation $r$ are included in relaxation $r+1$, we obtain a monotonic sequence of lower bounds: $f^{(r)} \le f^{(r+1)} \le f^* = f^{(\infty)}$. Remarkably, the hierarchy also exhibits generic finite convergence \cite{Nie2013}, though the convergence degree is not known \textit{a priori}.}

\begin{theorem}[{\cite[Theorem 2.2]{Henrion2006}}]\label{th:convergence}
    Let Assumption \ref{as:archimedean} be satisfied. Then, $f^{(r)}\nearrow f^*$ as $r\rightarrow \infty$.
\end{theorem}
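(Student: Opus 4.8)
The statement is precisely the convergence result of Henrion and Lasserre for polynomial matrix inequalities, so the plan is to verify that the hypotheses of their theorem are met and then invoke it. First I would recall the duality picture already set up in the excerpt: the hierarchy \eqref{eq:general_moment} is the moment relaxation, and its dual is the SOS (Putinar-type) strengthening in which one searches for a representation $f(\mathbf{x})-\lambda = p_0(\mathbf{x}) + \langle \bm{\Sigma}(\mathbf{x}),\mathbf{G}(\mathbf{x})\rangle$ with $p_0,\bm{\Sigma}$ SOS of degree bounded by the relaxation order $r$. Monotonicity $f^{(r)}\le f^{(r+1)}\le f^*$ is immediate because the feasible set of the moment problem at level $r+1$ projects into that at level $r$ (every admissible moment sequence of order $2(r+1)$ truncates to an admissible one of order $2r$), and each $f^{(r)}$ is a lower bound since any genuine minimizer $\mathbf{x}^*$ induces the moment vector $\mathbf{y}=\mathbf{b}_{2r}(\mathbf{x}^*)$, which is feasible and attains objective $f(\mathbf{x}^*)=f^*$.

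The substance is the asymptotic tightness $f^{(r)}\to f^*$. Here I would argue via the SOS side: fix $\varepsilon>0$; then $f(\mathbf{x})-(f^*-\varepsilon)$ is strictly positive on the feasible set $\mathcal{K}(\mathbf{G}(\mathbf{x}))$, which is compact by Assumption~\ref{as:archimedean}. The matrix-valued Putinar Positivstellensatz (this is the content of \cite[Theorem~2.2]{Henrion2006}, resting on the Archimedean property) then guarantees a representation $f(\mathbf{x})-(f^*-\varepsilon) = p_0(\mathbf{x})+\langle\bm{\Sigma}(\mathbf{x}),\mathbf{G}(\mathbf{x})\rangle$ with SOS $p_0$ and SOS matrix $\bm{\Sigma}$ of some finite degree; once $r$ is large enough to accommodate those degrees, $f^*-\varepsilon$ is a feasible value for the dual at level $r$, hence $f^{(r)}\ge f^*-\varepsilon$ (using weak duality between the moment and SOS programs). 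Combined with $f^{(r)}\le f^*$ and monotonicity, letting $\varepsilon\downarrow 0$ yields $f^{(r)}\nearrow f^*$.

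The one genuine obstacle is the Positivstellensatz step for the \emph{matrix} inequality \eqref{eq:pmi}: unlike the scalar case, one must invoke the version of Putinar's theorem for quadratic modules generated by a symmetric matrix polynomial, together with the verification that our Archimedean hypothesis (Assumption~\ref{as:archimedean}, phrased through compactness of a superlevel set of $p_0(\mathbf{x})+\langle\bm{\Sigma}(\mathbf{x}),\mathbf{G}(\mathbf{x})\rangle$) is exactly the condition under which that theorem applies. Since this is furnished wholesale by \cite[Theorem~2.2]{Henrion2006}, the proof reduces to citing it after checking that our problem \eqref{eq:opt_general} with its localizing and moment matrices \eqref{eq:localizing}--\eqref{eq:moment} is an instance of their framework and that Assumption~\ref{as:archimedean} is their Archimedean condition. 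I would also remark that weak duality (no duality gap is needed for the lower-bound direction) suffices, so no constraint qualification on the relaxations has to be established.
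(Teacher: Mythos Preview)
Your proposal is correct in content, but note that the paper does not prove this theorem at all: it is stated purely as a citation of \cite[Theorem~2.2]{Henrion2006} with no accompanying argument. So in that sense your approach ``differs'' from the paper's only in that you actually sketch the underlying Putinar--Lasserre argument (monotonicity from nested feasible sets, tightness from the matrix Positivstellensatz applied to $f-(f^*-\varepsilon)$, then weak duality), whereas the paper treats the result as a black box imported from the literature. Your sketch is accurate and would be appropriate if a self-contained justification were required; for the purposes of this paper, a bare citation suffices.
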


{Due to finite convergence, global optimality $f^{(r)}=f^*$ can be recognized when the moment matrix becomes ``flat.'' This flatness property, formalized by the Curto-Fialkow theorem \cite{Curto1996}, occurs when the rank of the moment matrix stabilizes:
\begin{equation}\label{eq:flatextension}
    s = \mathrm{Rank}(\mathbf{M}_r(\mathbf{y})) = \mathrm{Rank}(\mathbf{M}_{r-d_{\mathrm{G}}}(\mathbf{y})),
\end{equation}
where $\mathbf{M}_{r-d_{\mathrm{G}}}(\mathbf{y})$ is a submatrix of $\mathbf{M}_r(\mathbf{y})$. This indicates that the moments correspond to a finite number of atoms, implying at least $s$ distinct global minimizers that can be extracted \cite{Henrion2005}. However, this condition fails when there are infinitely many global minimizers, and comparing ranks based on approximate solutions can be numerically challenging.}

\section{Methods}

Following the background in the moment sum-of-squares hierarchy, this section presents the optimization problem of designing minimum-weight frame structures under compliance and free-vibration eigenvalue constraints (in Section \ref{sec:formulation}), its bilevel programming reformulation (Section \ref{sec:scalar}), and states the conditions for its solvability. In Section \ref{sec:msos}, we modify the single-level formulation from Section \ref{sec:formulation} to satisfy the algebraic compactness condition for convergence of the moment-sum-of-squares hierarchy. After showing that the first-order moments obtained by solving relaxations of the modified formulation satisfy conditions for solvability of the lower level problem in the bilevel formulation, we construct feasible upper bounds to the original problem. The section concludes with a convergence proof of the optimality gap between the upper and lower bounds and with implementation remarks.

\subsection{Optimization problem formulation}\label{sec:formulation}

{Let $\mathcal{L}_\mathrm{s}$ and $\mathcal{L}_\mathrm{fv}$ denote the sets of indices corresponding to static and free-vibration load cases, respectively.} We investigate the global solution to topology optimization of least-weight frame structures \eqref{eq:opt_orig_objective}, while accounting for upper bounds $\overline{c}_j \in \mathbb{R}_{>0}$, $j \in {\mathcal{L}_\mathrm{s}}$, on structural compliances \eqref{eq:opt_orig_compliance} of {$\lvert \mathcal{L}_\mathrm{s} \rvert$ static} load cases and lower bounds $\underline{\lambda}_j \in \mathbb{R}_{>0}${, $j \in \mathcal{L}_\mathrm{fv}$,} on the lowest nonzero free-vibration eigenvalues {of the $\lvert \mathcal{L}_\mathrm{fv} \rvert$ free-vibration load cases} expressed using the Rayleigh quotient \eqref{eq:opt_orig_fv}. Such optimization problems are naturally formalized in terms of cross-section areas $\mathbf{a}$, displacements $\mathcal{U} = {\{\mathbf{u}_j : j \in \mathcal{L}_\mathrm{s}\}}$ with $\mathbf{u}_j \in \mathbb{R}^{n_{\mathrm{dof},j}}$, and normalized eigenmodes $\mathcal{V} = {\{\mathbf{v}_j : j \in \mathcal{L}_\mathrm{fv}\}}$ with $\mathbf{v}_j \in \mathbb{R}^{n_{\mathrm{dof},j}}$ as
\begin{subequations}\label{eq:opt_orig}
	\begin{align}
		\min_{\mathbf{a}\in \mathbb{R}^{n_{\mathrm{e}}}\!,\;
        \mathcal{U},\, 
        \mathcal{V}} 
        & \sum_{e=1}^{n_\mathrm{e} } \rho_e \ell_e a_e\label{eq:opt_orig_objective}\\
		\text{s.t.}\; & \inf_{\mathbf{v}_j \in \mathbb{R}^{n_{\mathrm{dof},j}} \setminus \mathrm{Ker}\left( \mathbf{M}(\mathbf{a}) \right)} \frac{\mathbf{v}_j^\mathrm{T} \mathbf{K}_j(\mathbf{a}) \mathbf{v}_j}{\mathbf{v}_j^\mathrm{T} \mathbf{M}_j(\mathbf{a}) \mathbf{v}_j} \ge \underline{\lambda}_j,\quad j \in {\mathcal{L}_\mathrm{fv}}\label{eq:opt_orig_fv}\\
		& \mathbf{K}_j(\mathbf{a}) \mathbf{u}_j = \mathbf{f}_j,\quad j \in {\mathcal{L}_\mathrm{s}}\label{eq:opt_orig_static}\\
		& \mathbf{f}_j^\mathrm{T} \mathbf{u}_j \le \overline{c}_j,\quad j \in {\mathcal{L}_\mathrm{s}}\label{eq:opt_orig_compliance}\\
		& \mathbf{a} \ge \mathbf{0},\label{eq:opt_orig_nonneg}
	\end{align}
\end{subequations}
where $\mathbf{K}_j(\mathbf{a}) \in \mathbb{S}^{n_{\mathrm{dof},j}}_{\succeq 0}$ and $\mathbf{M}_j(\mathbf{a}) \in \mathbb{S}^{n_\mathrm{dof},j}_{\succeq 0}$ stand for the stiffness and mass matrices, $n_{\mathrm{dof},j}$ is the number of degrees of freedom in the $j$-th load case, and $\mathbf{f}_j \in \mathbb{R}^{n_{\mathrm{dof},j}}$ stands for the generalized force vector of the $j$-th load case, which includes the nodal forces and moments. In addition, $\bm{\ell} \in \mathbb{R}^{n_\mathrm{e}}_{>0}$ and $\bm{\rho} \in \mathbb{R}^{n_\mathrm{e}}_{>0}$ are vectors of lengths and densities of the $n_\mathrm{e}$ elements. {Because the stiffness and mass matrices may vary across the load cases, the setting of \eqref{eq:opt_orig} covers varying kinematic boundary conditions.}     {Notice that \eqref{eq:opt_orig} is a non-smooth problem, as the free-vibration eigenvalue constraints \eqref{eq:opt_orig_fv} are nondifferentiable for repeated eigenvalues.}

In bending-resistant structures such as frames, the stiffness matrix is usually a {low-degree} polynomial function of the design variables {(see Appendix \ref{app:polydep} for more details)}, and follows from the assembly
\begin{equation}\label{eq:stiffness}
	\mathbf{K}_j (\mathbf{a}) = \sum_{e=1}^{n_\mathrm{e}} \left[a_e \mathbf{K}_{j,e}^{\langle 1\rangle} + a_e^2 \mathbf{K}_{j,e}^{\langle 2\rangle} + a_e^3 \mathbf{K}_{j,e}^{\langle 3\rangle}\right],
\end{equation}
where $\mathbf{K}_{j,e}^{\langle i\rangle} \succeq 0$, $e \in \{1, \dots, n_\mathrm{e}\}$, $i \in \{1,2,3\}$, are auxiliary element stiffness matrices. In \eqref{eq:stiffness}, the terms $ a_e \mathbf{K}_{j,e}^{\langle 1\rangle}$ {typically} cover the membrane behavior, whereas the higher-order terms $a_e^2 \mathbf{K}_{j,e}^{\langle 2\rangle}$ and $a_e^3 \mathbf{K}_{j,e}^{\langle 3\rangle}$ capture the bending effects.

For the mass matrix $\mathbf{M}_j(\mathbf{a})$, we assume the form {(again, see Appendix \ref{app:polydep})}
\begin{equation}\label{eq:mass}
	\mathbf{M}_j (\mathbf{a}) = \mathbf{M}_j^{\langle 0\rangle} + \sum_{e=1}^{n_\mathrm{e}} a_e\mathbf{M}_{j,e}^{\langle 1\rangle},
\end{equation}
in which $a_e \mathbf{M}_{j,e}^{\langle 1\rangle} \succeq 0$, $e \in \{1, \dots, n_\mathrm{e}\}$, are the element mass matrices and $\mathbf{M}_j^{\langle 0\rangle} \succeq 0$ constitutes a non-structural mass. In this paper, we adopt stiffness-consistent mass matrices, but any other form can be used assuming it maintains a linear dependence on the cross-section areas. An extension to nonlinear terms in $\mathbf{M}_j(\mathbf{a})$ might be possible but is not covered in this manuscript.

One of the difficulties of the optimization problem \eqref{eq:opt_orig} lies in the fact that \eqref{eq:opt_orig_fv} and \eqref{eq:opt_orig_static} include products of state variables $\mathbf{u}_j$ and $\mathbf{v}_j$ with design variables $\mathbf{a}$. In \cite[Proposition 2.3]{Achtziger2008}, it has been shown that the state variables $\mathbf{u}_j$ and $\mathbf{v}_j$ can be eliminated from the formulation, leading to an equivalent semidefinite programming reformulation in the design variables only:
\begin{subequations}\label{eq:opt}
\begin{align}
	\min_{\mathbf{a}\in \mathbb{R}^{n_{\mathrm{e}}}}\; & \sum_{e=1}^{n_\mathrm{e} } \rho_e \ell_e a_e\label{eq:opt_objective}\\
	\text{s.t.}\; & \mathbf{K}_j(\mathbf{a}) - \underline{\lambda}_j \mathbf{M}_j(\mathbf{a}) \succeq 0,\quad j \in {\mathcal{L}_\mathrm{fv}}\label{eq:opt_fv}\\
	& \begin{pmatrix}
		\overline{c}_j & -\mathbf{f}_j^\mathrm{T}\\
		-\mathbf{f}_j & \mathbf{K}_j(\mathbf{a})
	\end{pmatrix} \succeq 0,\quad j \in {\mathcal{L}_\mathrm{s}}\label{eq:opt_static}\\
	& \mathbf{a} \ge \mathbf{0},\label{eq:opt_nonneg}
\end{align}
\end{subequations}
where the constraints \eqref{eq:opt_fv} and \eqref{eq:opt_static} are matrix inequalities. These inequalities are generally nonconvex due to their polynomial dependence on $\mathbf{a}$, and thus obtaining a (global) solution to \eqref{eq:opt} remains challenging. {However, the non-differentiability of the free-vibration eigenvalue constraints is avoided, as the Rayleigh quotient \eqref{eq:opt_orig_fv} is replaced by a polynomial matrix inequality \eqref{eq:opt_fv}.}

From the formulation \eqref{eq:opt}, we can state the following basic properties of the feasible designs $\hat{\mathbf{a}}$:
\begin{proposition}[Static admissibility {\cite[Proposition 1]{Tyburec2021}}]\label{lemma:statical_admissibility}
	Let $$\mathbf{\hat{a}} \in \left\{\mathbf{a} \; \vert \; \begin{pmatrix}
		\overline{c}_j & -\mathbf{f}_j^\mathrm{T}\\
		-\mathbf{f}_j & \mathbf{K}_j(\mathbf{a})
	\end{pmatrix} \succeq 0, \mathbf{a}\ge \mathbf{0}\right\}$$ hold true. Then, $\mathbf{f}_j \in \mathrm{Im}\left(\mathbf{K}_j(\hat{\mathbf{a}})\right)$.
\end{proposition}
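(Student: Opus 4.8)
The plan is to use the standard Schur complement characterization of positive semidefiniteness for block matrices. Recall that for a symmetric block matrix
\[
\begin{pmatrix} A & B^\mathrm{T} \\ B & C \end{pmatrix} \succeq 0
\]
with $C \succeq 0$, a necessary condition is $B^\mathrm{T} \in \mathrm{Im}(C)$ (equivalently, $\mathrm{Ker}(C) \subseteq \mathrm{Ker}(B^\mathrm{T})$). Applied to \eqref{eq:opt_static} with $A = \overline{c}_j$, $B = -\mathbf{f}_j$, and $C = \mathbf{K}_j(\hat{\mathbf{a}})$, this immediately yields $-\mathbf{f}_j \in \mathrm{Im}(\mathbf{K}_j(\hat{\mathbf{a}}))$, hence $\mathbf{f}_j \in \mathrm{Im}(\mathbf{K}_j(\hat{\mathbf{a}}))$.

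To carry this out in detail, first I would note that $\mathbf{K}_j(\hat{\mathbf{a}}) \succeq 0$ holds automatically: it is a principal submatrix of the positive semidefinite block matrix in the hypothesis, so it inherits positive semidefiniteness (alternatively, it follows from \eqref{eq:stiffness} together with $\hat{\mathbf{a}} \ge \mathbf{0}$ and $\mathbf{K}_{j,e}^{\langle i\rangle} \succeq 0$). Next I would argue by contradiction or directly: suppose $\mathbf{w} \in \mathrm{Ker}(\mathbf{K}_j(\hat{\mathbf{a}}))$ but $\mathbf{f}_j^\mathrm{T}\mathbf{w} \neq 0$. Testing the block matrix against the vector $(t, \mathbf{w}^\mathrm{T})^\mathrm{T}$ for $t \in \mathbb{R}$ gives the quadratic form
\[
\overline{c}_j t^2 - 2t\,\mathbf{f}_j^\mathrm{T}\mathbf{w} + \mathbf{w}^\mathrm{T}\mathbf{K}_j(\hat{\mathbf{a}})\mathbf{w} = \overline{c}_j t^2 - 2t\,\mathbf{f}_j^\mathrm{T}\mathbf{w},
\]
which is linear-plus-quadratic in $t$ with a nonzero linear coefficient; choosing $t$ with the appropriate sign and small magnitude makes this expression negative, contradicting positive semidefiniteness. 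Therefore $\mathbf{f}_j^\mathrm{T}\mathbf{w} = 0$ for every $\mathbf{w} \in \mathrm{Ker}(\mathbf{K}_j(\hat{\mathbf{a}}))$, i.e. $\mathbf{f}_j \perp \mathrm{Ker}(\mathbf{K}_j(\hat{\mathbf{a}}))$. Since $\mathbf{K}_j(\hat{\mathbf{a}})$ is symmetric, $\mathrm{Im}(\mathbf{K}_j(\hat{\mathbf{a}})) = \mathrm{Ker}(\mathbf{K}_j(\hat{\mathbf{a}}))^{\perp}$, which gives $\mathbf{f}_j \in \mathrm{Im}(\mathbf{K}_j(\hat{\mathbf{a}}))$.

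There is no real obstacle here; the only point requiring a little care is making sure the positive semidefiniteness of the Schur complement block (or, equivalently, the range condition) is invoked correctly, and that one remembers symmetry of $\mathbf{K}_j(\hat{\mathbf{a}})$ to pass between the orthogonal-complement statement and the image statement. The argument is entirely elementary linear algebra and does not use the polynomial structure of $\mathbf{K}_j$ beyond the fact that it produces a symmetric positive semidefinite matrix at feasible $\hat{\mathbf{a}}$. This result is exactly \cite[Proposition 1]{Tyburec2021}, so one could alternatively just cite it, but the self-contained argument above is short enough to include.
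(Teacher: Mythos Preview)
Your argument is correct; this is the standard Schur complement range condition, and your contradiction with the test vector $(t,\mathbf{w}^\mathrm{T})^\mathrm{T}$ is clean. The paper does not give its own proof here but simply cites \cite[Proposition~1]{Tyburec2021}, so your self-contained derivation is entirely in line with (and more explicit than) what the paper provides.
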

\begin{proposition}[Mass admissibility]\label{lemma:dynamic_admissibility}
	Let $$\mathbf{\hat{a}}\in\left\{\mathbf{a} \; \vert \; \mathbf{K}_j(\mathbf{a}) - \underline{\lambda}_j \mathbf{M}_j(\mathbf{a}) \succeq 0, \mathbf{a}\ge \mathbf{0}\right\}.$$ Then, $\mathrm{Im}\left(\mathbf{M}_j(\hat{\mathbf{a}})\right) \subseteq \mathrm{Im}\left(\mathbf{K}_j(\hat{\mathbf{a}})\right)$.
\end{proposition}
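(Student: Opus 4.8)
The plan is to work with kernels rather than images and exploit the symmetry and positive semidefiniteness of the matrices involved. Since $\mathbf{K}_j(\hat{\mathbf{a}})$ and $\mathbf{M}_j(\hat{\mathbf{a}})$ are symmetric, we have the orthogonal decompositions $\mathrm{Im}(\mathbf{K}_j(\hat{\mathbf{a}})) = \mathrm{Ker}(\mathbf{K}_j(\hat{\mathbf{a}}))^\perp$ and likewise for $\mathbf{M}_j(\hat{\mathbf{a}})$. Hence the desired inclusion $\mathrm{Im}(\mathbf{M}_j(\hat{\mathbf{a}})) \subseteq \mathrm{Im}(\mathbf{K}_j(\hat{\mathbf{a}}))$ is equivalent to the reverse inclusion of kernels, $\mathrm{Ker}(\mathbf{K}_j(\hat{\mathbf{a}})) \subseteq \mathrm{Ker}(\mathbf{M}_j(\hat{\mathbf{a}}))$, and it is this kernel inclusion that I would establish.

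Next I would take an arbitrary $\mathbf{x} \in \mathrm{Ker}(\mathbf{K}_j(\hat{\mathbf{a}}))$, so that in particular $\mathbf{x}^\mathrm{T}\mathbf{K}_j(\hat{\mathbf{a}})\mathbf{x} = 0$. Feasibility of $\hat{\mathbf{a}}$ in \eqref{eq:opt_fv} gives $\mathbf{x}^\mathrm{T}\bigl(\mathbf{K}_j(\hat{\mathbf{a}}) - \underline{\lambda}_j\mathbf{M}_j(\hat{\mathbf{a}})\bigr)\mathbf{x} \ge 0$, which combined with the vanishing of the first quadratic form yields $-\underline{\lambda}_j\,\mathbf{x}^\mathrm{T}\mathbf{M}_j(\hat{\mathbf{a}})\mathbf{x} \ge 0$. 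Since $\underline{\lambda}_j > 0$, this forces $\mathbf{x}^\mathrm{T}\mathbf{M}_j(\hat{\mathbf{a}})\mathbf{x} \le 0$; but $\mathbf{M}_j(\hat{\mathbf{a}}) \succeq 0$ (a linear combination with nonnegative coefficients $a_e$ of the PSD matrices $\mathbf{M}_j^{\langle 0\rangle}$ and $\mathbf{M}_{j,e}^{\langle 1\rangle}$), so $\mathbf{x}^\mathrm{T}\mathbf{M}_j(\hat{\mathbf{a}})\mathbf{x} \ge 0$ as well. Therefore $\mathbf{x}^\mathrm{T}\mathbf{M}_j(\hat{\mathbf{a}})\mathbf{x} = 0$.

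The final step is the standard fact that for a symmetric positive semidefinite matrix $\mathbf{A}$, $\mathbf{x}^\mathrm{T}\mathbf{A}\mathbf{x} = 0$ implies $\mathbf{A}\mathbf{x} = \mathbf{0}$; this follows by writing $\mathbf{A} = \mathbf{B}^\mathrm{T}\mathbf{B}$ so that $\|\mathbf{B}\mathbf{x}\|^2 = 0$, hence $\mathbf{B}\mathbf{x} = \mathbf{0}$ and $\mathbf{A}\mathbf{x} = \mathbf{0}$. Applying this to $\mathbf{A} = \mathbf{M}_j(\hat{\mathbf{a}})$ gives $\mathbf{x} \in \mathrm{Ker}(\mathbf{M}_j(\hat{\mathbf{a}}))$, completing the kernel inclusion and hence the proof. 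There is no real obstacle here: the argument is elementary once one dualizes to kernels; the only points requiring care are recording that $\underline{\lambda}_j > 0$ is essential for the sign deduction and that $\mathbf{M}_j(\hat{\mathbf{a}}) \succeq 0$ relies on $\hat{\mathbf{a}} \ge \mathbf{0}$ together with the PSD structure of the element mass matrices in \eqref{eq:mass}.
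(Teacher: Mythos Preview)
Your proof is correct and follows essentially the same approach as the paper: both exploit the quadratic-form characterization of $\mathbf{K}_j(\hat{\mathbf{a}}) - \underline{\lambda}_j \mathbf{M}_j(\hat{\mathbf{a}}) \succeq 0$ together with $\underline{\lambda}_j > 0$ and $\mathbf{M}_j(\hat{\mathbf{a}}) \succeq 0$. The only cosmetic difference is that the paper argues by contradiction on the image inclusion, whereas you dualize to kernels and give a direct proof; your version is slightly cleaner in that it makes the step ``$\mathbf{x}^\mathrm{T}\mathbf{M}_j(\hat{\mathbf{a}})\mathbf{x} = 0 \Rightarrow \mathbf{M}_j(\hat{\mathbf{a}})\mathbf{x} = \mathbf{0}$'' explicit.
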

\begin{proof}
	The matrix inequality \eqref{eq:opt_fv} is equivalent to
	\begin{equation}\label{eq:inimage}
		\mathbf{v}_j^\mathrm{T}\mathbf{K}_j(\mathbf{a})\mathbf{v}_j - \underline{\lambda}_j \mathbf{v}_j^\mathrm{T}\mathbf{M}_j(\mathbf{a})\mathbf{v}_j\ge 0\quad \forall \mathbf{v}_j \in \mathbb{R}^{n_{\mathrm{dof},j}}.
	\end{equation}
	Let us now proceed by contradiction and assume that $\mathrm{Im}\left(\mathbf{M}_j(\mathbf{a})\right) \nsubseteq \mathrm{Im}\left(\mathbf{K}_j(\mathbf{a})\right)$. Then, there must exist $\hat{\mathbf{v}}_j\in\left\{\mathbf{v}_j\;\vert\; \mathbf{v}_j\in \mathrm{Im}\left(\mathbf{M}_j(\mathbf{a})\right), \mathbf{v}_j\notin \mathrm{Im}\left(\mathbf{K}_j(\mathbf{a})\right)\right\}$, for which the left-hand side of \eqref{eq:inimage} evaluates as
	\begin{equation}
		-\underline{\lambda}_j \hat{\mathbf{v}}_j^\mathrm{T}\mathbf{M}_j(\mathbf{a})\hat{\mathbf{v}}_j < 0,
	\end{equation}
	which is strictly negative because $\underline{\lambda}_j>0$, $\mathbf{M}_j(\mathbf{a}) \succeq 0$, and $\hat{\mathbf{v}}_j\in \mathrm{Im}\left(\mathbf{M}_j(\mathbf{a})\right)$, contradicting~\eqref{eq:inimage}.
\end{proof}
Propositions \ref{lemma:statical_admissibility} and \ref{lemma:dynamic_admissibility} ensure that any feasible point to the problem \eqref{eq:opt} {with positive compliance} is statically admissible, i.e., can carry the design loads, {and with a positive fundamental free-vibration eigenvalue is} mass admissible, carrying the (non-)structural masses.

{In the opposite direction}, we have {that if the design is able to carry the (non)structural masses, then it exhibits a finite positive fundamental free-vibration eigenvalue.}

%
%
%
\begin{proposition}[{Positive finite eigenvalue}]\label{prop:exists_gamma_fv}
	Let $\mathrm{Im}(\mathbf{M}_j(\mathbf{a})) \subseteq \mathrm{Im}(\mathbf{K}_j(\mathbf{a}))$. Then, there exists $\hat{\lambda}>0$ such that
	$\mathbf{K}_j(\mathbf{a}) - \hat{\lambda} \mathbf{M}_j(\mathbf{a})\succeq 0$.
\end{proposition}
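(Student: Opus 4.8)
The plan is to reduce the statement to the range of $\mathbf{K}_j(\mathbf{a})$, where $\mathbf{K}_j(\mathbf{a})$ acts as a positive definite operator, and then apply the elementary fact that a positive definite quadratic form dominates any fixed positive semidefinite form up to a positive multiplicative constant.

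First I would record the orthogonal decomposition $\mathbb{R}^{n_{\mathrm{dof},j}} = \mathrm{Ker}(\mathbf{K}_j(\mathbf{a})) \oplus \mathrm{Im}(\mathbf{K}_j(\mathbf{a}))$, which is valid because $\mathbf{K}_j(\mathbf{a})$ is symmetric. Taking orthogonal complements in the hypothesis $\mathrm{Im}(\mathbf{M}_j(\mathbf{a})) \subseteq \mathrm{Im}(\mathbf{K}_j(\mathbf{a}))$ and using $\mathrm{Ker}(\mathbf{A}) = \mathrm{Im}(\mathbf{A})^{\perp}$ for symmetric $\mathbf{A}$ yields $\mathrm{Ker}(\mathbf{K}_j(\mathbf{a})) \subseteq \mathrm{Ker}(\mathbf{M}_j(\mathbf{a}))$. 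Hence for any $\mathbf{v} = \mathbf{v}_0 + \mathbf{v}_1$ with $\mathbf{v}_0 \in \mathrm{Ker}(\mathbf{K}_j(\mathbf{a}))$ and $\mathbf{v}_1 \in \mathrm{Im}(\mathbf{K}_j(\mathbf{a}))$, both quadratic forms collapse onto $W := \mathrm{Im}(\mathbf{K}_j(\mathbf{a}))$: since $\mathbf{K}_j(\mathbf{a})\mathbf{v}_0 = \mathbf{M}_j(\mathbf{a})\mathbf{v}_0 = \mathbf{0}$, we get $\mathbf{v}^{\mathrm{T}}\mathbf{K}_j(\mathbf{a})\mathbf{v} = \mathbf{v}_1^{\mathrm{T}}\mathbf{K}_j(\mathbf{a})\mathbf{v}_1$ and $\mathbf{v}^{\mathrm{T}}\mathbf{M}_j(\mathbf{a})\mathbf{v} = \mathbf{v}_1^{\mathrm{T}}\mathbf{M}_j(\mathbf{a})\mathbf{v}_1$.

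Next I would work on $W$, where the restriction $\mathbf{K}_j(\mathbf{a})|_W$ is positive definite. If $\mathbf{M}_j(\mathbf{a}) = \mathbf{0}$ any $\hat{\lambda} > 0$ works; otherwise let $\beta > 0$ be the largest eigenvalue of the positive semidefinite operator $(\mathbf{K}_j(\mathbf{a})|_W)^{-1/2}\,\mathbf{M}_j(\mathbf{a})|_W\,(\mathbf{K}_j(\mathbf{a})|_W)^{-1/2}$, which is positive because $\mathbf{M}_j(\mathbf{a})|_W \neq \mathbf{0}$ (as $\mathrm{Im}(\mathbf{M}_j(\mathbf{a})) \subseteq W$). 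Conjugating the bound $(\mathbf{K}_j(\mathbf{a})|_W)^{-1/2}\mathbf{M}_j(\mathbf{a})|_W(\mathbf{K}_j(\mathbf{a})|_W)^{-1/2} \preceq \beta\,\mathbf{I}_W$ by $(\mathbf{K}_j(\mathbf{a})|_W)^{1/2}$ gives $\mathbf{M}_j(\mathbf{a})|_W \preceq \beta\,\mathbf{K}_j(\mathbf{a})|_W$, so $\hat{\lambda} := 1/\beta > 0$ satisfies $\mathbf{v}_1^{\mathrm{T}}\bigl(\mathbf{K}_j(\mathbf{a}) - \hat{\lambda}\mathbf{M}_j(\mathbf{a})\bigr)\mathbf{v}_1 \geq 0$ for all $\mathbf{v}_1 \in W$. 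Combining this with the decomposition above, $\mathbf{v}^{\mathrm{T}}\bigl(\mathbf{K}_j(\mathbf{a}) - \hat{\lambda}\mathbf{M}_j(\mathbf{a})\bigr)\mathbf{v} \geq 0$ for every $\mathbf{v} \in \mathbb{R}^{n_{\mathrm{dof},j}}$, i.e., $\mathbf{K}_j(\mathbf{a}) - \hat{\lambda}\mathbf{M}_j(\mathbf{a}) \succeq 0$.

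Alternatively, and closer to the Rayleigh-quotient language of \eqref{eq:opt_orig_fv}, one could take $\hat{\lambda}$ to be the minimum of $\mathbf{v}^{\mathrm{T}}\mathbf{K}_j(\mathbf{a})\mathbf{v}\,/\,\mathbf{v}^{\mathrm{T}}\mathbf{M}_j(\mathbf{a})\mathbf{v}$ over the compact set $\{\mathbf{v}\in W : \lVert\mathbf{v}\rVert = 1,\ \mathbf{v}^{\mathrm{T}}\mathbf{M}_j(\mathbf{a})\mathbf{v} \geq \epsilon\}$ for small $\epsilon>0$ and argue this minimum is strictly positive. Either way, the only real obstacle is the bookkeeping with kernels: one must establish $\mathrm{Ker}(\mathbf{K}_j(\mathbf{a})) \subseteq \mathrm{Ker}(\mathbf{M}_j(\mathbf{a}))$ so that directions annihilated by $\mathbf{K}_j(\mathbf{a})$ do not force the Rayleigh quotient to blow up, and handle the degenerate case $\mathbf{M}_j(\mathbf{a}) = \mathbf{0}$ separately; once the problem is confined to $W$ everything is routine.
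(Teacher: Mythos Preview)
Your proof is correct and takes essentially the same spectral route as the paper: both set $\hat{\lambda}$ equal to the smallest positive generalized eigenvalue of the pencil $(\mathbf{K}_j(\mathbf{a}),\mathbf{M}_j(\mathbf{a}))$, the paper by invoking the generalized eigenvalue problem directly and you via the equivalent characterization through the largest eigenvalue of $(\mathbf{K}_j|_W)^{-1/2}\mathbf{M}_j|_W(\mathbf{K}_j|_W)^{-1/2}$. Your explicit handling of the kernel inclusion $\mathrm{Ker}(\mathbf{K}_j(\mathbf{a}))\subseteq\mathrm{Ker}(\mathbf{M}_j(\mathbf{a}))$ and the reduction to $W=\mathrm{Im}(\mathbf{K}_j(\mathbf{a}))$ is more careful than the paper's two-sentence argument, which leaves these details implicit.
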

\begin{proof}
Since $\mathbf{K}_j(\mathbf{a}) \in \mathbb{S}^{{n_\mathrm{dof},j}}_{\succeq 0}$ and $\mathbf{M}_j(\mathbf{a}) \in \mathbb{S}^{{n_\mathrm{dof},j}}_{\succeq 0}$, the nonzero eigenvalues of the generalized eigenvalue problem $\mathbf{K}_j(\mathbf{a})\mathbf{v}_j - \lambda \mathbf{M}_j(\mathbf{a}) \mathbf{v}_j = \mathbf{0}$, with $\mathbf{v}_j \in \mathrm{Im}\left(\mathbf{M}_j(\mathbf{a})\right)$, must be real and positive. Thus, setting $0<\hat{\lambda}= \lambda_1$ with $\lambda_1$ being the lowest nonzero eigenvalue, the eigenvalue constraint is satisfied.
\end{proof}

\subsection{Properties of scalarized problem}\label{sec:scalar}
In this section, we introduce a bilevel programming reformulation of the problem \eqref{eq:opt_orig} and provide conditions for its solvability. This reformulation will further enable us to construct feasible solutions to \eqref{eq:opt_orig}.

Setting $\mathbf{a}(\delta) := \delta \tilde{\mathbf{a}}$ with $\tilde{\mathbf{a}}\ge \mathbf{0}$, the optimization problem \eqref{eq:opt} becomes equivalent to a bilevel optimization problem, with the upper level seeking \textit{admissible} ratios of the cross-section areas $\tilde{\mathbf{a}}$ and the lower level searching for the optimal scaling factor $\delta$ of the fixed cross-section ratios:
\begin{subequations}\label{eq:bilevel}
	\begin{align}
		\min_{\tilde{\mathbf{a}}\in \mathbb{R}^{n_{\mathrm{e}}}\!,\; \delta^*\in\mathbb{R}}\; & \delta^* \sum_{e=1}^{n_\mathrm{e}} \rho_e \ell_e \tilde{a}_e\\
		\mathrm{s.t.}\ & \tilde{\mathbf{a}} \ge \mathbf{0},\label{eq:bilevel-atilde}\\
		& \mathbf{f}_j \in \mathrm{Im}\left(\mathbf{K}_j(\tilde{\mathbf{a}})\right), \quad j \in {\mathcal{L}_\mathrm{s}}\label{eq:bilevel_static}\\
		& \mathrm{Im}\left(\mathbf{M}_{j}(\tilde{\mathbf{a}})\right) \subseteq \mathrm{Im}\left(\mathbf{K}_{j}(\tilde{\mathbf{a}})\right),\quad j \in {\mathcal{L}_\mathrm{fv}}\label{eq:bilevel_mass}\\
		& \mathbf{W}_j^\mathrm{T}\left[ \sum_{e=1}^{n_\mathrm{e}}\tilde{a}_e\mathbf{K}_{j,e}^{\langle 1\rangle} - \underline{\lambda}_j \sum_{e=1}^{n_\mathrm{e}} \tilde{a}_e \mathbf{M}_{j,e}^{\langle 1\rangle} \right]\mathbf{W}_j \succeq 0, \quad j \in {\mathcal{L}_\mathrm{fv}}\label{eq:bilevel-membraine}\\
		& \begin{aligned}\label{eq:bilevel-lower}
			\delta^* \in \argmin_{\delta\in\mathbb{R}}\; & \delta\\
			\mathrm{s.t.}\; & \begin{pmatrix}
				\overline{c}_j & -\mathbf{f}_j^\mathrm{T}\\
				-\mathbf{f}_j & \mathbf{K}_j (\delta \tilde{\mathbf{a}})
			\end{pmatrix} \succeq 0, \quad j \in {\mathcal{L}_\mathrm{s}}\\
			& \mathbf{K}_j(\delta \tilde{\mathbf{a}}) - \underline{\lambda}_j \mathbf{M}_j(\delta \tilde{\mathbf{a}}) \succeq 0, \quad j \in {\mathcal{L}_\mathrm{fv}}\\
			& \delta \ge 0,
		\end{aligned}
	\end{align}
\end{subequations}
where $\mathbf{W}_j$ is an orthonormal basis of $\mathrm{Ker}\left(\sum_{e=1}^{n_\mathrm{e}}\left[\mathbf{K}_{j,e}^{\langle 2\rangle} + \mathbf{K}_{j,e}^{\langle 3\rangle}\right]\right)$ and the constraints \eqref{eq:bilevel-atilde}-\eqref{eq:bilevel-membraine} ensure that the lower-level problem \eqref{eq:bilevel-lower} {has a non-empty interior}. In particular, these {constraints} include nonnegative components of the cross-sectional ratios \eqref{eq:bilevel-atilde}, static \eqref{eq:bilevel_static} and mass \eqref{eq:bilevel_mass} admissibility of the ratios, and the pure membrane eigenvalues to be at least $\underline{\lambda}_j$ \eqref{eq:bilevel-membraine}, which we will prove in Proposition \ref{th:feasibility_cond} (that uses the notation set up in \eqref{eq:hatnotation}). Moreover, the lower-level problem \eqref{eq:bilevel-lower} is quasi-convex in $\delta$, allowing one to obtain a globally optimal scaling $\delta^*$ for fixed ratios $\tilde{\mathbf{a}}$ using bisection. Proving the quasi-convexity property and showing that the conditions \eqref{eq:bilevel-atilde}--\eqref{eq:bilevel-membraine} are sufficient and necessary for solvability of the lower-level problem \eqref{eq:bilevel-lower} constitutes the main goal of this section.

Let us first focus on the lower-level problem \eqref{eq:bilevel-lower}. We start by expressing the lowest nonzero
free-vibration eigenvalue function $\lambda_{\min,j}(\delta \tilde{\mathbf{a}})$ using the Raleigh quotient \cite[Definition 2.2 and Appendix A.1]{Achtziger2008} as
\begin{equation}\label{eq:RQ}
	\lambda_{\min,j}(\delta \tilde{\mathbf{a}}) = \min_{\mathbf{v}_j \in \mathbb{R}^{n_{\mathrm{dof},j}}\setminus \mathrm{Ker}\left(\hat{\mathbf{M}}_j^{\langle 0\rangle} + \hat{\mathbf{M}}_j^{\langle 1\rangle}\right)} \frac{\mathbf{v}_j^\mathrm{T} \left[\delta\hat{\mathbf{K}}_j^{\langle 1\rangle} + \delta^2\hat{\mathbf{K}}_j^{\langle 2\rangle} + \delta^3\hat{\mathbf{K}}_j^{\langle 3\rangle} \right]  \mathbf{v}_j}{\mathbf{v}_j^\mathrm{T} \hat{\mathbf{M}}_j^{\langle 0\rangle} \mathbf{v}_j + \mathbf{v}_j^\mathrm{T} \delta\hat{\mathbf{M}}_j^{\langle 1\rangle} \mathbf{v}_j},
\end{equation}
in which we adopt the simplified notation
\begin{equation}\label{eq:hatnotation}
	\hat{\mathbf{K}}_j^{\langle i\rangle}(\tilde{\mathbf{a}}) = \sum_{e=1}^{n_\mathrm{e}}\tilde{a}_e^i\mathbf{K}_{j,e}^{\langle i\rangle}, \qquad 
	\hat{\mathbf{M}}_j^{\langle i\rangle}(\tilde{\mathbf{a}}) = \sum_{e=1}^{n_\mathrm{e}}\tilde{a}_e^i \mathbf{M}_{j,e}^{\langle i\rangle}.
\end{equation}
and omit the argument of $\hat{\mathbf{K}}_j^{\langle i\rangle}$ and $\hat{\mathbf{M}}_j^{\langle i\rangle}$ 
when it is clear from the context, to shorten the equations.

{Because of} $\mathrm{Im}\left(\mathbf{M}_{j}(\tilde{\mathbf{a}})\right) \subseteq \mathrm{Im}\left(\mathbf{K}_{j}(\tilde{\mathbf{a}})\right)$ (see Proposition~\ref{prop:exists_gamma_fv}), \eqref{eq:RQ} renders that $\lambda_{\min,j}>0$. Thus, to prove feasibility of the eigenvalue constraint in \eqref{eq:bilevel-lower} with respect to $\delta$, it remains to find the condition under which $\sup_{\delta} \lambda_{\min,j} \ge \underline{\lambda}_j$. 

Because of $\delta > 0$, equation~\eqref{eq:RQ} can further be rewritten to
\begin{equation}\label{eq:RQs}
	\begin{multlined}
		\lambda_{\min,j}(\delta \tilde{\mathbf{a}}) =\\
		\inf_{\mathbf{v}_j \in \mathbb{R}^{n_{\mathrm{dof},j}}\setminus \mathrm{Ker}\left(\hat{\mathbf{M}}_j^{\langle 0\rangle}+\hat{\mathbf{M}}_j^{\langle 1\rangle}\right)} \frac{\mathbf{v}_j^\mathrm{T} \hat{\mathbf{K}}_j^{\langle 1\rangle}\mathbf{v}_j + \delta \mathbf{v}_j^\mathrm{T}\hat{\mathbf{K}}_j^{\langle 2\rangle} \mathbf{v}_j + \delta^2 \mathbf{v}_j^\mathrm{T}\hat{\mathbf{K}}_j^{\langle 3\rangle} \mathbf{v}_j}{\frac{1}{\delta}\mathbf{v}_j^\mathrm{T} \hat{\mathbf{M}}_j^{\langle 0\rangle} \mathbf{v}_j + \mathbf{v}_j^\mathrm{T} \hat{\mathbf{M}}_j^{\langle 1\rangle} \mathbf{v}_j}.
	\end{multlined}
\end{equation}
From \eqref{eq:RQs} it follows that if $\mathbf{v}_j \in \mathrm{Ker}\left(\hat{\mathbf{K}}_j^{\langle 2\rangle} + \hat{\mathbf{K}}_j^{\langle 3\rangle}\right)$ and $\hat{\mathbf{M}}_j^{\langle 0\rangle} = \mathbf{0}$, then $\lambda_{\min,j}(\delta \tilde{\mathbf{a}})$ is a constant function. 

The next proposition shows that, in the general case, $\lambda_{\min,j}(\delta \tilde{\mathbf{a}})$ is a non-decreasing function of $\delta$:

\begin{proposition}\label{prop:monotonic}
Let $\lambda_{\min,j}(\delta \tilde{\mathbf{a}})$ be as defined in \eqref{eq:RQs}. Then, $\lambda_{\min,j}(\delta \tilde{\mathbf{a}})$ is a non-decreasing function.
\begin{proof}
Let $\Delta > 0$; we need to show that $\lambda_{\min,j}((\delta+\Delta) \tilde{\mathbf{a}}) - \lambda_{\min,j}(\delta \tilde{\mathbf{a}})\ge 0$. Using the definition \eqref{eq:RQs}, we have 
\begin{multline}
	\lambda_{\min,j}((\delta + \Delta)\tilde{\mathbf{a}}) - \lambda_{\min,j}(\delta \tilde{\mathbf{a}})=\\
	\inf_{\mathbf{v}_j \in \mathbb{R}^{n_{\mathrm{dof},j}}\setminus \mathrm{Ker}\left(\hat{\mathbf{M}}_j^{\langle 0\rangle}+\hat{\mathbf{M}}_j^{\langle 1\rangle}\right)} 
	\frac{\mathbf{v}_j^\mathrm{T} \hat{\mathbf{K}}_j^{\langle 1\rangle}\mathbf{v}_j + (\delta+\Delta) \mathbf{v}_j^\mathrm{T}\hat{\mathbf{K}}_j^{\langle 2\rangle} \mathbf{v}_j + (\delta+\Delta)^2 \mathbf{v}_j^\mathrm{T}\hat{\mathbf{K}}_j^{\langle 3\rangle} \mathbf{v}_j}{\frac{1}{\delta + \Delta }\mathbf{v}_j^\mathrm{T} \hat{\mathbf{M}}_j^{\langle 0\rangle} \mathbf{v}_j + \mathbf{v}_j^\mathrm{T} \hat{\mathbf{M}}_j^{\langle 1\rangle} \mathbf{v}_j} \\
	- \inf_{\mathbf{v}_j \in \mathbb{R}^{n_{\mathrm{dof},j}}\setminus \mathrm{Ker}\left(\hat{\mathbf{M}}_j^{\langle 0\rangle}+\hat{\mathbf{M}}_j^{\langle 1\rangle}\right)} \frac{\mathbf{v}_j^\mathrm{T} \hat{\mathbf{K}}_j^{\langle 1\rangle}\mathbf{v}_j + \delta \mathbf{v}_j^\mathrm{T}\hat{\mathbf{K}}_j^{\langle 2\rangle} \mathbf{v}_j + \delta^2 \mathbf{v}_j^\mathrm{T}\hat{\mathbf{K}}_j^{\langle 3\rangle} \mathbf{v}_j}{\frac{1}{\delta}\mathbf{v}_j^\mathrm{T} \hat{\mathbf{M}}_j^{\langle 0\rangle} \mathbf{v}_j + \mathbf{v}_j^\mathrm{T} \hat{\mathbf{M}}_j^{\langle 1\rangle} \mathbf{v}_j}.
\end{multline}
Splitting the first infimum into two parts using the triangle inequality, we obtain 
\begin{multline}\label{eq:lamdiff}
	\lambda_{\min,j}((\delta + \Delta)\tilde{\mathbf{a}}) - \lambda_{\min,j}(\delta \tilde{\mathbf{a}})\ge\\
	\inf_{\mathbf{v}_j \in \mathbb{R}^{n_{\mathrm{dof},j}}\setminus \mathrm{Ker}\left(\hat{\mathbf{M}}_j^{\langle 0\rangle}+\hat{\mathbf{M}}_j^{\langle 1\rangle}\right)} 
	\frac{\mathbf{v}_j^\mathrm{T} \hat{\mathbf{K}}_j^{\langle 1\rangle}\mathbf{v}_j + \delta \mathbf{v}_j^\mathrm{T}\hat{\mathbf{K}}_j^{\langle 2\rangle} \mathbf{v}_j + \delta^2 \mathbf{v}_j^\mathrm{T}\hat{\mathbf{K}}_j^{\langle 3\rangle} \mathbf{v}_j}{\frac{1}{\delta + \Delta}\mathbf{v}_j^\mathrm{T} \hat{\mathbf{M}}_j^{\langle 0\rangle} \mathbf{v}_j + \mathbf{v}_j^\mathrm{T} \hat{\mathbf{M}}_j^{\langle 1\rangle} \mathbf{v}_j} \\
	+ \inf_{\mathbf{v}_j \in \mathbb{R}^{n_{\mathrm{dof},j}}\setminus \mathrm{Ker}\left(\hat{\mathbf{M}}_j^{\langle 0\rangle}+\hat{\mathbf{M}}_j^{\langle 1\rangle}\right)} 
	\frac{\Delta \mathbf{v}_j^\mathrm{T}\hat{\mathbf{K}}_j^{\langle 2\rangle} \mathbf{v}_j + (2\delta \Delta + \Delta^2) \mathbf{v}_j^\mathrm{T}\hat{\mathbf{K}}_j^{\langle 3\rangle} \mathbf{v}_j}{\frac{1}{\delta + \Delta}\mathbf{v}_j^\mathrm{T} \hat{\mathbf{M}}_j^{\langle 0\rangle} \mathbf{v}_j + \mathbf{v}_j^\mathrm{T} \hat{\mathbf{M}}_j^{\langle 1\rangle} \mathbf{v}_j}\\
	- \inf_{\mathbf{v}_j \in \mathbb{R}^{n_{\mathrm{dof},j}}\setminus \mathrm{Ker}\left(\hat{\mathbf{M}}_j^{\langle 0\rangle}+\hat{\mathbf{M}}_j^{\langle 1\rangle}\right)} \frac{\mathbf{v}_j^\mathrm{T} \hat{\mathbf{K}}_j^{\langle 1\rangle}\mathbf{v}_j + \delta \mathbf{v}_j^\mathrm{T}\hat{\mathbf{K}}_j^{\langle 2\rangle} \mathbf{v}_j + \delta^2 \mathbf{v}_j^\mathrm{T}\hat{\mathbf{K}}_j^{\langle 3\rangle} \mathbf{v}_j}{\frac{1}{\delta}\mathbf{v}_j^\mathrm{T} \hat{\mathbf{M}}_j^{\langle 0\rangle} \mathbf{v}_j + \mathbf{v}_j^\mathrm{T} \hat{\mathbf{M}}_j^{\langle 1\rangle} \mathbf{v}_j}.
\end{multline}
Because $\forall \delta,\Delta >0: \frac{1}{\delta} \ge \frac{1}{\delta + \Delta}$, it holds for all $\mathbf{v}_j$, including the minimizing ones, that
\begin{equation}
	\frac{1}{\delta}\mathbf{v}_j^\mathrm{T} \hat{\mathbf{M}}_j^{\langle 0\rangle} \mathbf{v}_j \ge \frac{1}{\delta + \Delta}\mathbf{v}_j^\mathrm{T} \hat{\mathbf{M}}_j^{\langle 0\rangle} \mathbf{v}_j.
\end{equation}
Consequently,
\begin{multline}
	\inf_{\mathbf{v}_j \in \mathbb{R}^{n_{\mathrm{dof},j}}\setminus \mathrm{Ker}\left(\hat{\mathbf{M}}_j^{\langle 0\rangle}+\hat{\mathbf{M}}_j^{\langle 1\rangle}\right)} 
	\frac{\mathbf{v}_j^\mathrm{T} \hat{\mathbf{K}}_j^{\langle 1\rangle}\mathbf{v}_j + \delta \mathbf{v}_j^\mathrm{T}\hat{\mathbf{K}}_j^{\langle 2\rangle} \mathbf{v}_j + \delta^2 \mathbf{v}_j^\mathrm{T}\hat{\mathbf{K}}_j^{\langle 3\rangle} \mathbf{v}_j}{\frac{1}{\delta + \Delta}\mathbf{v}_j^\mathrm{T} \hat{\mathbf{M}}_j^{\langle 0\rangle} \mathbf{v}_j + \mathbf{v}_j^\mathrm{T} \hat{\mathbf{M}}_j^{\langle 1\rangle} \mathbf{v}_j} \\
	- \inf_{\mathbf{v}_j \in \mathbb{R}^{n_{\mathrm{dof},j}}\setminus \mathrm{Ker}\left(\hat{\mathbf{M}}_j^{\langle 0\rangle}+\hat{\mathbf{M}}_j^{\langle 1\rangle}\right)} \frac{\mathbf{v}_j^\mathrm{T} \hat{\mathbf{K}}_j^{\langle 1\rangle}\mathbf{v}_j + \delta \mathbf{v}_j^\mathrm{T}\hat{\mathbf{K}}_j^{\langle 2\rangle} \mathbf{v}_j + \delta^2 \mathbf{v}_j^\mathrm{T}\hat{\mathbf{K}}_j^{\langle 3\rangle} \mathbf{v}_j}{\frac{1}{\delta}\mathbf{v}_j^\mathrm{T} \hat{\mathbf{M}}_j^{\langle 0\rangle} \mathbf{v}_j + \mathbf{v}_j^\mathrm{T} \hat{\mathbf{M}}_j^{\langle 1\rangle} \mathbf{v}_j} \ge 0,
\end{multline}
which allows us to simplify \eqref{eq:lamdiff} to
\begin{equation}\label{eq:monotoneous}
\begin{multlined}
\lambda_{\min,j}((\delta + \Delta)\tilde{\mathbf{a}}) - \lambda_{\min,j}(\delta \tilde{\mathbf{a}}) \ge\\
 \inf_{\mathbf{v}_j \in \mathbb{R}^{n_{\mathrm{dof},j}}\setminus \mathrm{Ker}\left(\hat{\mathbf{M}}_j^{\langle 0\rangle}+\hat{\mathbf{M}}_j^{\langle 1\rangle}\right)} 
\frac{\Delta \mathbf{v}_j^\mathrm{T}\hat{\mathbf{K}}_j^{\langle 2\rangle} \mathbf{v}_j + (2\delta \Delta + \Delta^2) \mathbf{v}_j^\mathrm{T}\hat{\mathbf{K}}_j^{\langle 3\rangle} \mathbf{v}_j}{\frac{1}{\delta + \Delta}\mathbf{v}_j^\mathrm{T} \hat{\mathbf{M}}_j^{\langle 0\rangle} \mathbf{v}_j + \mathbf{v}_j^\mathrm{T} \hat{\mathbf{M}}_j^{\langle 1\rangle} \mathbf{v}_j} \ge 0.
\end{multlined}
\end{equation}
In \eqref{eq:monotoneous}, the nonnegativity follows from $\Delta \mathbf{v}_j^\mathrm{T}\hat{\mathbf{K}}_j^{\langle 2\rangle} \mathbf{v}_j \ge 0$ and $\left(2\delta \Delta + \Delta^2\right) \mathbf{v}_j^\mathrm{T}\hat{\mathbf{K}}_j^{\langle 3\rangle} \mathbf{v}_j\ge 0$, which is because of $\hat{\mathbf{K}}_j^{\langle 2\rangle}, \hat{\mathbf{K}}_j^{\langle 3\rangle} \succeq 0$ and $\delta\ge 0, \Delta > 0$.
\end{proof}
\end{proposition}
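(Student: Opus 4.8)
The plan is to reduce the statement to a one-variable computation: for each \emph{fixed} admissible test vector $\mathbf{v}_j$, the Rayleigh-type ratio inside the infimum in \eqref{eq:RQ}--\eqref{eq:RQs} is itself a non-decreasing function of $\delta$, and a pointwise infimum of non-decreasing functions is non-decreasing.

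Concretely, fix $\mathbf{v}_j \in \mathbb{R}^{n_{\mathrm{dof},j}} \setminus \mathrm{Ker}(\hat{\mathbf{M}}_j^{\langle 0\rangle} + \hat{\mathbf{M}}_j^{\langle 1\rangle})$ and abbreviate the five scalars $A = \mathbf{v}_j^\mathrm{T}\hat{\mathbf{K}}_j^{\langle 1\rangle}\mathbf{v}_j$, $B = \mathbf{v}_j^\mathrm{T}\hat{\mathbf{K}}_j^{\langle 2\rangle}\mathbf{v}_j$, $C = \mathbf{v}_j^\mathrm{T}\hat{\mathbf{K}}_j^{\langle 3\rangle}\mathbf{v}_j$, $P = \mathbf{v}_j^\mathrm{T}\hat{\mathbf{M}}_j^{\langle 0\rangle}\mathbf{v}_j$, $Q = \mathbf{v}_j^\mathrm{T}\hat{\mathbf{M}}_j^{\langle 1\rangle}\mathbf{v}_j$; all five are nonnegative by the assumed positive semidefiniteness of $\hat{\mathbf{K}}_j^{\langle i\rangle}$, $\hat{\mathbf{M}}_j^{\langle 0\rangle}$ and $\hat{\mathbf{M}}_j^{\langle 1\rangle}$ (the last two because $\mathbf{M}_j^{\langle 0\rangle}\succeq 0$, $a_e\mathbf{M}_{j,e}^{\langle 1\rangle}\succeq 0$ and $\tilde{a}_e\ge 0$). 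Working from form \eqref{eq:RQ}, the ratio for this $\mathbf{v}_j$ is $R(\delta) = (\delta A + \delta^2 B + \delta^3 C)/(P + \delta Q)$; it is well defined and nonnegative for $\delta > 0$ because $P + Q > 0$ on the admissible set forces $P + \delta Q > 0$. Differentiating,
\[ R'(\delta) \;=\; \frac{AP + 2BP\,\delta + (BQ + 3CP)\,\delta^2 + 2CQ\,\delta^3}{(P + \delta Q)^2} \;\ge\; 0, \]
so $R$ is non-decreasing on $(0,\infty)$. Equivalently, multiplying out $R(\delta_2) - R(\delta_1)$ for $0 < \delta_1 < \delta_2$ leaves a polynomial in $\delta_1,\delta_2$ with nonnegative coefficients, which avoids differentiation altogether.

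The second ingredient is that the infimum in \eqref{eq:RQ} runs over the \emph{same} index set for every $\delta > 0$: since $\hat{\mathbf{M}}_j^{\langle 0\rangle}, \hat{\mathbf{M}}_j^{\langle 1\rangle} \succeq 0$, we have $\mathrm{Ker}(\hat{\mathbf{M}}_j^{\langle 0\rangle} + \delta\hat{\mathbf{M}}_j^{\langle 1\rangle}) = \mathrm{Ker}(\hat{\mathbf{M}}_j^{\langle 0\rangle}) \cap \mathrm{Ker}(\hat{\mathbf{M}}_j^{\langle 1\rangle}) = \mathrm{Ker}(\hat{\mathbf{M}}_j^{\langle 0\rangle} + \hat{\mathbf{M}}_j^{\langle 1\rangle})$ for all $\delta > 0$, so the domain $\mathbb{R}^{n_{\mathrm{dof},j}} \setminus \mathrm{Ker}(\hat{\mathbf{M}}_j^{\langle 0\rangle} + \hat{\mathbf{M}}_j^{\langle 1\rangle})$ does not depend on $\delta$. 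Given $0 < \delta_1 < \delta_2$, for every admissible $\mathbf{v}_j$ the first step gives $\lambda_{\min,j}(\delta_1\tilde{\mathbf{a}}) \le R(\delta_1) \le R(\delta_2)$; taking the infimum over $\mathbf{v}_j$ on the right-hand side yields $\lambda_{\min,j}(\delta_1\tilde{\mathbf{a}}) \le \lambda_{\min,j}(\delta_2\tilde{\mathbf{a}})$, which is the claimed monotonicity.

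I expect the only genuine subtlety to be this last bookkeeping step: the "infimum of a monotone family is monotone" argument is legitimate only because the set of admissible test vectors is $\delta$-independent, which itself rests on the two mass blocks being positive semidefinite; the remainder is routine single-variable calculus. An alternative route that avoids the explicit derivative is to split each infimum in $\lambda_{\min,j}((\delta+\Delta)\tilde{\mathbf{a}})$ by the triangle inequality and bound the resulting pieces using $\hat{\mathbf{K}}_j^{\langle 2\rangle}, \hat{\mathbf{K}}_j^{\langle 3\rangle} \succeq 0$ and $1/\delta \ge 1/(\delta+\Delta)$; this is the same monotonicity at the level of individual Rayleigh quotients, reorganized so that the nonnegative $\Delta$-terms are isolated.
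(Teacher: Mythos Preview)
Your proof is correct and takes a genuinely different, more direct route than the paper's. The paper works at the level of the infimum itself: it writes $\lambda_{\min,j}((\delta+\Delta)\tilde{\mathbf{a}}) - \lambda_{\min,j}(\delta\tilde{\mathbf{a}})$ as a difference of two infima, applies the super-additivity $\inf(f+g)\ge\inf f+\inf g$ to split the first infimum, then uses $1/\delta\ge 1/(\delta+\Delta)$ to cancel two of the three resulting terms, leaving a single nonnegative infimum. You instead argue pointwise: for each fixed admissible $\mathbf{v}_j$ the Rayleigh quotient is a smooth one-variable function whose derivative has only nonnegative coefficients, and then invoke that a pointwise infimum of non-decreasing functions over a $\delta$-independent index set is non-decreasing. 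Your approach is cleaner and more transparent---the monotonicity is visible in a single derivative line---and it makes explicit the one structural fact the paper uses only implicitly, namely that the admissible set $\mathbb{R}^{n_{\mathrm{dof},j}}\setminus\mathrm{Ker}(\hat{\mathbf{M}}_j^{\langle 0\rangle}+\hat{\mathbf{M}}_j^{\langle 1\rangle})$ does not vary with $\delta$. The paper's splitting argument, which you in fact sketch at the end of your proposal as an alternative, buys nothing extra here; it is a reorganisation of the same inequalities that avoids writing down $R'(\delta)$ but at the cost of three separate infimum estimates.
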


With this result, we have shown that by increasing $\delta$ we do not decrease $\lambda_{\min,j}(\delta \tilde{\mathbf{a}})$. {Notice that this holds up to $\delta\rightarrow \infty$, as we can choose $\Delta\rightarrow \infty$ in the above proof.} For the solvability of the lower-level problem \eqref{eq:bilevel-lower}, it is crucial to guarantee that there exits a $\delta$ such that $\lambda_{\min,j}(\delta \tilde{\mathbf{a}}) \ge \underline{\lambda}_j$. In other words, we require that $\sup_{\delta} \lambda_{\min,j}(\delta \tilde{\mathbf{a}}) \ge \underline{\lambda}_j$. To show this, let $\mathbf{v}_j := \mathbf{t}_j + \mathbf{w}_j$, where $\mathbf{t}_j \in \mathrm{Im}\left(\hat{\mathbf{K}}_j^{\langle 2\rangle} + \hat{\mathbf{K}}_j^{\langle 3\rangle}\right) + \mathrm{Ker}\left(\hat{\mathbf{M}}_j^{\langle 0\rangle} + \hat{\mathbf{M}}_j^{\langle 1\rangle}\right) $ and $\mathbf{w}_j \in \mathrm{Ker}\left(\hat{\mathbf{K}}_j^{\langle 2\rangle} + \hat{\mathbf{K}}_j^{\langle 3\rangle}\right) \cap \mathrm{Im}\left(\hat{\mathbf{M}}_j^{\langle 0\rangle} + \hat{\mathbf{M}}_j^{\langle 1\rangle}\right)$.
Then, we can write the expression for the supremum.

\begin{proposition}\label{prop:supremum}
Let $\lambda_{\min,j}(\delta \tilde{\mathbf{a}})$ be as defined in \eqref{eq:RQs}, and let $\mathbf{v}_j := \mathbf{t}_j + \mathbf{w}_j$ as defined above. Then, 
\begin{equation}\label{eq:supremum}
	\sup_{\delta} \lambda_{\min,j}(\delta \tilde{\mathbf{a}}) = \inf_{\mathbf{w}_j \in \mathrm{Ker}\left(\hat{\mathbf{K}}_j^{\langle 2\rangle} + \hat{\mathbf{K}}_j^{\langle 3\rangle}\right) \cap \mathrm{Im}\left(\hat{\mathbf{M}}_j^{\langle 0\rangle} + \hat{\mathbf{M}}_j^{\langle 1\rangle}\right)} \frac{\mathbf{w}_j^\mathrm{T} \hat{\mathbf{K}}_j^{\langle 1\rangle}\mathbf{w}_j}{\mathbf{w}_j^\mathrm{T} \hat{\mathbf{M}}_j^{\langle 1\rangle} \mathbf{w}_j}.
\end{equation}
\begin{proof}
We prove the statement by splitting $\mathbf{v}_j$ into $\mathbf{t}_j$ and $\mathbf{w}_j$ and showing that it must hold that $\mathbf{t}_j=\mathbf{0}$.  For this splitting, the supremum is expressed as
\begin{equation}\label{eq:splitting}
	\begin{multlined}
		\sup_\delta \lambda_{\min,j}(\delta \tilde{\mathbf{a}})= \\
		\sup_\delta \inf\limits_{\mathbf{t}_j, \mathbf{w}_j}
		 \frac{\left(\mathbf{t}_j+\mathbf{w}_j\right)^\mathrm{T} \hat{\mathbf{K}}_j^{\langle 1\rangle}\left(\mathbf{t}_j+\mathbf{w}_j\right) + \delta\mathbf{t}_j^\mathrm{T} \hat{\mathbf{K}}_j^{\langle 2\rangle}\mathbf{t}_j + \delta^2\mathbf{t}_j^\mathrm{T} \hat{\mathbf{K}}_j^{\langle 3\rangle}\mathbf{t}_j }{\frac{1}{\delta}\left(\mathbf{t}_j+\mathbf{w}_j\right)^\mathrm{T} \hat{\mathbf{M}}_j^{\langle 0\rangle} \left(\mathbf{t}_j+\mathbf{w}_j\right) + \left(\mathbf{t}_j+\mathbf{w}_j\right)^\mathrm{T} \hat{\mathbf{M}}_j^{\langle 1\rangle} \left(\mathbf{t}_j+\mathbf{w}_j\right)}.
	\end{multlined}
\end{equation}
Because of the monotonicity (recall Proposition~\ref{prop:monotonic}), $\sup_\delta \lambda_{\min,j}(\delta \tilde{\mathbf{a}}) = \lim\limits_{\delta\rightarrow \infty}\lambda_{\min,j}(\delta \tilde{\mathbf{a}})$. Further, suppose by {contradiction} that $\mathbf{t}_j \neq \mathbf{0}$ holds at the infimum. Then, $\lambda_{\min,j}(\delta \tilde{\mathbf{a}}) \rightarrow \infty$ as $\delta \rightarrow \infty$, due to $\mathbf{w}_j \in \mathrm{Im}\left(\hat{\mathbf{M}}_j^{\langle 0\rangle} + \hat{\mathbf{M}}_j^{\langle 1\rangle}\right)$ and because $\mathbf{t}_j$ cannot be in $\mathrm{Ker}\left(\hat{\mathbf{K}}_j^{\langle 2\rangle} + \hat{\mathbf{K}}_j^{\langle 3\rangle}\right)$. On the other hand, for $\mathbf{t}_j = \mathbf{0}$, $\lambda_{\min,j}(\delta \tilde{\mathbf{a}})$ is finite for all $\delta$ due to $\mathbf{w}_j \in \mathrm{Im}\left(\hat{\mathbf{M}}_j^{\langle 0\rangle} + \hat{\mathbf{M}}_j^{\langle 1\rangle}\right)$. Consequently, $\mathbf{t}_j^*=\mathbf{0}$ must hold at the infimum. After inserting $\mathbf{t}_j^*=\mathbf{0}$ and $\delta^*\rightarrow \infty$ into \eqref{eq:splitting}, we receive \eqref{eq:supremum}, which completes the proof.
\end{proof}
\end{proposition}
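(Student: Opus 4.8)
The plan is to first replace the supremum by a limit using monotonicity, then exploit the orthogonal splitting $\mathbf{v}_j = \mathbf{t}_j + \mathbf{w}_j$ already set up, and finally argue that the limiting minimizer must satisfy $\mathbf{t}_j = \mathbf{0}$, at which point the claimed Rayleigh-type formula can simply be read off. Proposition~\ref{prop:monotonic} (whose argument applies for increments $\Delta$ of any size, hence up to $\delta\to\infty$) shows $\lambda_{\min,j}(\delta\tilde{\mathbf{a}})$ is non-decreasing, so $\sup_\delta\lambda_{\min,j}(\delta\tilde{\mathbf{a}}) = \lim_{\delta\to\infty}\lambda_{\min,j}(\delta\tilde{\mathbf{a}})$. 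Substituting $\mathbf{v}_j = \mathbf{t}_j + \mathbf{w}_j$ into~\eqref{eq:RQs} and using $\hat{\mathbf{K}}_j^{\langle 2\rangle}\mathbf{w}_j = \hat{\mathbf{K}}_j^{\langle 3\rangle}\mathbf{w}_j = \mathbf{0}$ — a consequence of $\mathbf{w}_j \in \mathrm{Ker}(\hat{\mathbf{K}}_j^{\langle 2\rangle}+\hat{\mathbf{K}}_j^{\langle 3\rangle})$ together with $\hat{\mathbf{K}}_j^{\langle 2\rangle},\hat{\mathbf{K}}_j^{\langle 3\rangle}\succeq 0$ — eliminates every mixed $\mathbf{t}_j$--$\mathbf{w}_j$ contribution to the $\delta$- and $\delta^2$-terms of the numerator, which is precisely how one obtains the rewriting~\eqref{eq:splitting}.

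For the inequality ``$\le$'' I would test~\eqref{eq:splitting} with $\mathbf{t}_j = \mathbf{0}$, i.e.\ with an arbitrary $\mathbf{w}_0 \in \mathrm{Ker}(\hat{\mathbf{K}}_j^{\langle 2\rangle}+\hat{\mathbf{K}}_j^{\langle 3\rangle})\cap\mathrm{Im}(\hat{\mathbf{M}}_j^{\langle 0\rangle}+\hat{\mathbf{M}}_j^{\langle 1\rangle})$: the numerator then loses its $\delta$-dependence, so passing $\delta\to\infty$ drops the $\tfrac1\delta\hat{\mathbf{M}}_j^{\langle 0\rangle}$ term and yields $\sup_\delta\lambda_{\min,j}(\delta\tilde{\mathbf{a}}) \le \mathbf{w}_0^{\mathrm{T}}\hat{\mathbf{K}}_j^{\langle 1\rangle}\mathbf{w}_0 / \mathbf{w}_0^{\mathrm{T}}\hat{\mathbf{M}}_j^{\langle 1\rangle}\mathbf{w}_0$ (a zero denominator being read as $+\infty$); infimizing over such $\mathbf{w}_0$ gives ``$\le$'' in~\eqref{eq:supremum}. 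For ``$\ge$'' I would show that a minimizing $\mathbf{v}_j$ cannot keep a nonzero component outside $\mathrm{Ker}(\hat{\mathbf{K}}_j^{\langle 2\rangle}+\hat{\mathbf{K}}_j^{\langle 3\rangle})$ as $\delta\to\infty$: whenever $\mathbf{t}_j^{\mathrm{T}}(\hat{\mathbf{K}}_j^{\langle 2\rangle}+\hat{\mathbf{K}}_j^{\langle 3\rangle})\mathbf{t}_j>0$, the numerator of~\eqref{eq:splitting} grows like $\delta$ or $\delta^2$ while its denominator stays bounded, forcing $\lambda_{\min,j}(\delta\tilde{\mathbf{a}})\to\infty$, which is incompatible with the upper bound just derived (the case in which that bound equals $+\infty$ being trivial, since both sides of~\eqref{eq:supremum} then agree). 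Hence $\mathbf{t}_j^{*}=\mathbf{0}$, and inserting $\mathbf{t}_j = \mathbf{0}$ into~\eqref{eq:splitting} and letting $\delta\to\infty$ leaves exactly the right-hand side of~\eqref{eq:supremum}.

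The step I expect to be the main obstacle is making the ``$\ge$'' direction fully rigorous — it is essentially a minimax interchange $\sup_\delta\inf_{\mathbf{v}_j} = \inf_{\mathbf{v}_j}\sup_\delta$ — i.e.\ proving the infimum of the limiting quotient is genuinely attained within $\{\mathbf{t}_j = \mathbf{0}\}$ and not merely along sequences drifting toward it. Because the quotient in~\eqref{eq:splitting} is non-decreasing in $\delta$ (numerator up, denominator down), the half $\sup_\delta\inf_{\mathbf{v}_j} = \lim_{\delta\to\infty}\inf_{\mathbf{v}_j}$ is automatic; the substance is a compactness argument — normalize $\|\mathbf{v}_j\|=1$, extract a convergent minimizing sequence for $\delta_k\to\infty$, and exclude limits outside $\mathrm{Ker}(\hat{\mathbf{K}}_j^{\langle 2\rangle}+\hat{\mathbf{K}}_j^{\langle 3\rangle})$. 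The delicate case is a limit vector lying in $\mathrm{Ker}(\hat{\mathbf{K}}_j^{\langle 2\rangle}+\hat{\mathbf{K}}_j^{\langle 3\rangle})$ (so the numerator does not blow up) but with degenerating mass $\mathbf{v}_j^{\mathrm{T}}\hat{\mathbf{M}}_j^{\langle 1\rangle}\mathbf{v}_j\to 0$: there the mass-admissibility condition $\mathrm{Im}(\mathbf{M}_j(\tilde{\mathbf{a}}))\subseteq\mathrm{Im}(\mathbf{K}_j(\tilde{\mathbf{a}}))$ (already available here via Proposition~\ref{prop:exists_gamma_fv}) must be combined with the $\delta$-monotonicity to rule out the quotient dipping below the right-hand side of~\eqref{eq:supremum}.
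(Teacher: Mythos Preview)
Your approach is essentially the same as the paper's: both use Proposition~\ref{prop:monotonic} to replace $\sup_\delta$ by $\lim_{\delta\to\infty}$, insert the $\mathbf{t}_j+\mathbf{w}_j$ splitting into~\eqref{eq:RQs}, and argue that the infimizing direction must have $\mathbf{t}_j=\mathbf{0}$ because a nonzero $\mathbf{t}_j$ forces the quotient to diverge while $\mathbf{t}_j=\mathbf{0}$ keeps it finite. Your explicit separation into ``$\le$'' and ``$\ge$'' and your flagging of the minimax interchange are actually more careful than the paper's argument, which treats that step at precisely the informal level you identify as the main obstacle and does not spell out the compactness or degenerate-denominator analysis.
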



Using Proposition \ref{prop:supremum}, it is now possible to state the conditions for $\tilde{\mathbf{a}}$ under which there exists a $\delta$ such that the free-vibration constraint in the lower-level problem \eqref{eq:bilevel-lower} is feasible:
\begin{proposition}\label{prop:scaling}
	There exists $\delta$ such that $\mathbf{K}_j(\delta \tilde{\mathbf{a}}) - \underline{\lambda}_j \mathbf{M}_j(\delta \tilde{\mathbf{a}}) \succeq 0$ if and only if $\tilde{\mathbf{a}} \in \left\{\mathbf{a} \;\vert\; \mathbf{a}\ge \mathbf{0}, \mathrm{Im}\left(\mathbf{M}_j(\mathbf{a})\right) \subseteq \mathrm{Im}\left(\mathbf{K}_j(\mathbf{a})\right),\mathbf{W}_j^\mathrm{T}\left[\hat{\mathbf{K}}_{j}^{\langle 1\rangle} - \underline{\lambda}_j \hat{\mathbf{M}}_{j}^{\langle 1\rangle} \right]\mathbf{W}_j \succeq 0\right\}$, where $\mathbf{W}_j$ denotes an orthonormal basis of $\mathrm{Ker}\left(\hat{\mathbf{K}}_j^{\langle 2\rangle} + \hat{\mathbf{K}}_j^{\langle 3\rangle}\right) \cap \mathrm{Im}\left(\hat{\mathbf{M}}_j^{\langle 0\rangle} + \hat{\mathbf{M}}_j^{\langle 1\rangle}\right)$.
\end{proposition}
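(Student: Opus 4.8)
The plan is to reduce the matrix inequality $\mathbf{K}_j(\delta\tilde{\mathbf{a}})-\underline{\lambda}_j\mathbf{M}_j(\delta\tilde{\mathbf{a}})\succeq 0$ to the scalar bound $\lambda_{\min,j}(\delta\tilde{\mathbf{a}})\ge\underline{\lambda}_j$ and then combine the monotonicity of \cref{prop:monotonic} with the closed-form supremum of \cref{prop:supremum}. Three elementary observations are needed first. (i)~For $\delta>0$ and a positive semidefinite $\mathbf{M}$, $\mathbf{v}^{\mathrm{T}}\mathbf{M}\mathbf{v}=0\iff\mathbf{M}\mathbf{v}=\mathbf{0}$; applied to $\mathbf{M}_j(\delta\tilde{\mathbf{a}})=\hat{\mathbf{M}}_j^{\langle 0\rangle}+\delta\hat{\mathbf{M}}_j^{\langle 1\rangle}$ this shows $\mathbf{K}_j(\delta\tilde{\mathbf{a}})-\underline{\lambda}_j\mathbf{M}_j(\delta\tilde{\mathbf{a}})\succeq 0\iff\lambda_{\min,j}(\delta\tilde{\mathbf{a}})\ge\underline{\lambda}_j$ with $\lambda_{\min,j}$ as in \eqref{eq:RQ}, since on the directions excluded by the $\mathrm{Ker}$-restriction the mass form vanishes and the stiffness form is automatically nonnegative. (ii)~Because the summands $\mathbf{K}_{j,e}^{\langle i\rangle},\mathbf{M}_j^{\langle 0\rangle},\mathbf{M}_{j,e}^{\langle 1\rangle}$ are positive semidefinite, for any $\delta>0$ one has $\mathrm{Im}(\mathbf{K}_j(\delta\tilde{\mathbf{a}}))=\sum_{i=1}^{3}\mathrm{Im}(\hat{\mathbf{K}}_j^{\langle i\rangle})=\mathrm{Im}(\mathbf{K}_j(\tilde{\mathbf{a}}))$ and likewise $\mathrm{Im}(\mathbf{M}_j(\delta\tilde{\mathbf{a}}))=\mathrm{Im}(\mathbf{M}_j(\tilde{\mathbf{a}}))$, i.e.\ the images are $\delta$-independent. (iii)~With $\mathbf{W}_j$ an orthonormal basis of $\mathcal{W}_j:=\mathrm{Ker}(\hat{\mathbf{K}}_j^{\langle 2\rangle}+\hat{\mathbf{K}}_j^{\langle 3\rangle})\cap\mathrm{Im}(\hat{\mathbf{M}}_j^{\langle 0\rangle}+\hat{\mathbf{M}}_j^{\langle 1\rangle})$, the inequality $\mathbf{W}_j^{\mathrm{T}}[\hat{\mathbf{K}}_j^{\langle 1\rangle}-\underline{\lambda}_j\hat{\mathbf{M}}_j^{\langle 1\rangle}]\mathbf{W}_j\succeq 0$ is equivalent to $\inf_{\mathbf{w}\in\mathcal{W}_j}\mathbf{w}^{\mathrm{T}}\hat{\mathbf{K}}_j^{\langle 1\rangle}\mathbf{w}/(\mathbf{w}^{\mathrm{T}}\hat{\mathbf{M}}_j^{\langle 1\rangle}\mathbf{w})\ge\underline{\lambda}_j$, because the matrix form merely tests, in addition, directions $\mathbf{w}\in\mathcal{W}_j$ with $\mathbf{w}^{\mathrm{T}}\hat{\mathbf{M}}_j^{\langle 1\rangle}\mathbf{w}=0$, for which $\mathbf{w}^{\mathrm{T}}\hat{\mathbf{K}}_j^{\langle 1\rangle}\mathbf{w}\ge 0$ holds trivially.

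For the ``only if'' direction, let $\delta>0$ satisfy $\mathbf{K}_j(\delta\tilde{\mathbf{a}})-\underline{\lambda}_j\mathbf{M}_j(\delta\tilde{\mathbf{a}})\succeq 0$; nonnegativity $\tilde{\mathbf{a}}\ge\mathbf{0}$ is part of the standing setup of this section. \Cref{lemma:dynamic_admissibility} applied at $\hat{\mathbf{a}}=\delta\tilde{\mathbf{a}}$ gives $\mathrm{Im}(\mathbf{M}_j(\delta\tilde{\mathbf{a}}))\subseteq\mathrm{Im}(\mathbf{K}_j(\delta\tilde{\mathbf{a}}))$, hence $\mathrm{Im}(\mathbf{M}_j(\tilde{\mathbf{a}}))\subseteq\mathrm{Im}(\mathbf{K}_j(\tilde{\mathbf{a}}))$ by~(ii). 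By~(i) the hypothesis reads $\lambda_{\min,j}(\delta\tilde{\mathbf{a}})\ge\underline{\lambda}_j$, so \cref{prop:monotonic} yields $\sup_{\delta'>0}\lambda_{\min,j}(\delta'\tilde{\mathbf{a}})\ge\underline{\lambda}_j$, and \cref{prop:supremum} together with~(iii) delivers $\mathbf{W}_j^{\mathrm{T}}[\hat{\mathbf{K}}_j^{\langle 1\rangle}-\underline{\lambda}_j\hat{\mathbf{M}}_j^{\langle 1\rangle}]\mathbf{W}_j\succeq 0$.

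For the ``if'' direction, assume the three conditions hold. The image inclusion guarantees, through \eqref{eq:RQ} (cf.\ \cref{prop:exists_gamma_fv} and the remark following~\eqref{eq:RQ}), that $\lambda_{\min,j}(\delta\tilde{\mathbf{a}})$ is well defined, finite, and positive for all $\delta>0$; it is non-decreasing in $\delta$ by \cref{prop:monotonic} and continuous on $(0,\infty)$. Recasting the third condition via~(iii) and invoking \cref{prop:supremum} gives $\sup_{\delta>0}\lambda_{\min,j}(\delta\tilde{\mathbf{a}})\ge\underline{\lambda}_j$. Since $\delta\mapsto\lambda_{\min,j}(\delta\tilde{\mathbf{a}})$ is non-decreasing and continuous with $\lim_{\delta\to\infty}\lambda_{\min,j}(\delta\tilde{\mathbf{a}})=\sup_{\delta>0}\lambda_{\min,j}(\delta\tilde{\mathbf{a}})$, some finite $\delta>0$ satisfies $\lambda_{\min,j}(\delta\tilde{\mathbf{a}})\ge\underline{\lambda}_j$, which by~(i) is the claimed matrix inequality.

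The step I expect to be the crux is this last one: the supremum in \cref{prop:supremum} is a limit as $\delta\to\infty$ and need not be attained at a finite $\delta$, so the passage ``$\sup_\delta\lambda_{\min,j}(\delta\tilde{\mathbf{a}})\ge\underline{\lambda}_j\ \Rightarrow\ \exists$ finite feasible $\delta$'' is immediate when the inequality is strict but delicate in the borderline case where $\mathbf{W}_j^{\mathrm{T}}[\hat{\mathbf{K}}_j^{\langle 1\rangle}-\underline{\lambda}_j\hat{\mathbf{M}}_j^{\langle 1\rangle}]\mathbf{W}_j$ is singular. Closing that case requires either showing that no infimizing direction in \eqref{eq:supremum} carries non-structural mass (so that the supremum is attained), or working with the strict version of the third condition, which is precisely the one that secures a non-empty interior of the lower-level problem~\eqref{eq:bilevel-lower}. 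A secondary, routine bookkeeping point is the subspace-inequality versus Rayleigh-quotient dictionary~(iii) when $\hat{\mathbf{M}}_j^{\langle 1\rangle}$ is singular on $\mathcal{W}_j$, settled by the nonnegativity of $\mathbf{w}^{\mathrm{T}}\hat{\mathbf{K}}_j^{\langle 1\rangle}\mathbf{w}$.
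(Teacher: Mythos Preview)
Your argument is correct and, for the ``if'' direction, essentially identical to the paper's: both convert the matrix condition on $\mathbf{W}_j$ into the Rayleigh-quotient bound via \cref{prop:supremum} and then conclude. The borderline concern you flag---that $\sup_\delta\lambda_{\min,j}(\delta\tilde{\mathbf{a}})=\underline{\lambda}_j$ might not be attained at finite $\delta$---is genuine, and the paper's proof glosses over it in exactly the same way; neither proof closes this edge case, so you are not missing anything relative to the paper.

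For the ``only if'' direction, however, the paper takes a shorter route that avoids \cref{prop:monotonic} and \cref{prop:supremum} altogether. Since the columns of $\mathbf{W}_j$ lie in $\mathrm{Ker}(\hat{\mathbf{K}}_j^{\langle 2\rangle}+\hat{\mathbf{K}}_j^{\langle 3\rangle})$, conjugating the assumed inequality $\mathbf{K}_j(\delta\tilde{\mathbf{a}})-\underline{\lambda}_j\mathbf{M}_j(\delta\tilde{\mathbf{a}})\succeq 0$ by $\mathbf{W}_j$ kills the higher-order stiffness terms and leaves
\[
\delta\,\mathbf{W}_j^{\mathrm{T}}\hat{\mathbf{K}}_j^{\langle 1\rangle}\mathbf{W}_j-\underline{\lambda}_j\delta\,\mathbf{W}_j^{\mathrm{T}}\hat{\mathbf{M}}_j^{\langle 1\rangle}\mathbf{W}_j-\underline{\lambda}_j\mathbf{W}_j^{\mathrm{T}}\hat{\mathbf{M}}_j^{\langle 0\rangle}\mathbf{W}_j\succeq 0;
\]
dividing by $\delta>0$ gives $\mathbf{W}_j^{\mathrm{T}}[\hat{\mathbf{K}}_j^{\langle 1\rangle}-\underline{\lambda}_j\hat{\mathbf{M}}_j^{\langle 1\rangle}]\mathbf{W}_j\succeq\tfrac{1}{\delta}\underline{\lambda}_j\mathbf{W}_j^{\mathrm{T}}\hat{\mathbf{M}}_j^{\langle 0\rangle}\mathbf{W}_j\succeq 0$. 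Your detour through the Rayleigh quotient works but is unnecessary here; the projection argument is a one-liner and does not require observation~(iii) or its bookkeeping about directions with $\mathbf{w}^{\mathrm{T}}\hat{\mathbf{M}}_j^{\langle 1\rangle}\mathbf{w}=0$.
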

\begin{proof}
\fbox{$\Rightarrow$} Let $\mathbf{K}_j(\delta \tilde{\mathbf{a}}) - \underline{\lambda}_j \mathbf{M}_j(\delta \tilde{\mathbf{a}}) \succeq 0$ hold. Then, $\mathrm{Im}\left(\mathbf{M}_j(\tilde{\mathbf{a}})\right) \subseteq \mathrm{Im}\left(\mathbf{K}_j(\tilde{\mathbf{a}})\right)$ based on Proposition \ref{lemma:dynamic_admissibility}. Next, we project the matrix inequality using $\mathbf{W}_j$, yielding
\begin{equation}
	\delta \mathbf{W}_{{j}}^\mathrm{T} \hat{\mathbf{K}}_j^{\langle 1\rangle} \mathbf{W}_j- \underline{\lambda}_j \delta \mathbf{W}_{{j}}^\mathrm{T} \hat{\mathbf{M}}_j^{\langle 1\rangle} \mathbf{W}_j - \underline{\lambda}_j\mathbf{W}_{{j}}^\mathrm{T} \hat{\mathbf{M}}_j^{\langle 0\rangle} \mathbf{W}_j \succeq 0.
\end{equation}
For $\delta>0$, this is further equivalent to
\begin{equation}
	\mathbf{W}_{{j}}^\mathrm{T} \hat{\mathbf{K}}_j^{\langle 1\rangle} \mathbf{W}_j- \underline{\lambda}_j \mathbf{W}_{{j}}^\mathrm{T} \hat{\mathbf{M}}_j^{\langle 1\rangle} \mathbf{W}_j \succeq \frac{1}{\delta}\underline{\lambda}_j\mathbf{W}_{{j}}^\mathrm{T} \hat{\mathbf{M}}_j^{\langle 0\rangle} \mathbf{W}_j \succeq 0.
\end{equation}
\fbox{$\Leftarrow$} Let $\mathrm{Im}\left(\mathbf{M}_j(\tilde{\mathbf{a}})\right) \subseteq \mathrm{Im}\left(\mathbf{K}_j(\tilde{\mathbf{a}})\right)$ and $\mathbf{W}_j^\mathrm{T}\left[\hat{\mathbf{K}}_{j}^{\langle 1\rangle} - \underline{\lambda}_j \hat{\mathbf{M}}_{j}^{\langle 1\rangle} \right]\mathbf{W}_j \succeq 0$ hold. Because of the first assumption and Proposition \ref{prop:exists_gamma_fv}, it holds that
\begin{equation}
\forall \mathbf{v}_j \in \mathrm{Im}\left(\mathbf{K}_j(\tilde{\mathbf{a}})\right) \exists \gamma>0: \mathbf{v}_j^\mathrm{T}\mathbf{K}_j(\tilde{\mathbf{a}})\mathbf{v}_j - \gamma\mathbf{v}_j^\mathrm{T}\mathbf{M}_j(\tilde{\mathbf{a}})\mathbf{v}_j \ge 0,
\end{equation}
and thus also $\mathbf{K}_j(\tilde{\mathbf{a}}) - \gamma\mathbf{M}_j(\tilde{\mathbf{a}}) \succeq 0$ for $\gamma > 0$. Consequently, it remains to show that $\gamma\ge \underline{\lambda}_j$.

Because of $\mathbf{W}_j^\mathrm{T}\left[\hat{\mathbf{K}}_{j}^{\langle 1\rangle} - \underline{\lambda}_j \hat{\mathbf{M}}_{j}^{\langle 1\rangle} \right]\mathbf{W}_j \succeq 0$, we also have
\begin{equation}\label{eq:inequality}
	\forall \mathbf{w}_j \in \mathrm{Ker}\left(\hat{\mathbf{K}}_j^{\langle 2\rangle} + \hat{\mathbf{K}}_j^{\langle 3\rangle}\right) \cap \mathrm{Im}\left(\hat{\mathbf{M}}_j^{\langle 0\rangle} + \hat{\mathbf{M}}_j^{\langle 1\rangle}\right):  \mathbf{w}_j^\mathrm{T} \left[\hat{\mathbf{K}}_{j}^{\langle 1\rangle} - \underline{\lambda}_j \hat{\mathbf{M}}_{j}^{\langle 1\rangle} \right] \mathbf{w}_j \ge 0.
\end{equation}
In particular, the above holds for $w_j$ minimizing the left-hand side of \eqref{eq:inequality}, yielding
\begin{equation}
	\underline{\lambda}_j \le \inf_{\mathbf{w}_j \in \mathrm{Ker}\left(\hat{\mathbf{K}}_j^{\langle 2\rangle} + \hat{\mathbf{K}}_j^{\langle 3\rangle}\right) \cap \mathrm{Im}\left(\hat{\mathbf{M}}_j^{\langle 0\rangle} + \hat{\mathbf{M}}_j^{\langle 1\rangle}\right)} \frac{\mathbf{w}_j^\mathrm{T} \hat{\mathbf{K}}_{j}^{\langle 1\rangle} \mathbf{w}_j}{\mathbf{w}_j^\mathrm{T} \hat{\mathbf{M}}_{j}^{\langle 1\rangle} \mathbf{w}_j} = \sup_{\delta} \lambda_{\min,j}(\mathbf{a}(\delta))
\end{equation}
based on Proposition \ref{prop:supremum}.
\end{proof}

Having stated the conditions for the feasibility of the free-vibration constraint, we extend this result to the setting of the lower-level program \eqref{eq:bilevel-lower} to account for compliance constraints as well.

\begin{proposition}\label{th:feasibility_cond}
    The optimization problem \eqref{eq:bilevel-lower} has a non-empty interior if and only if $\tilde{\mathbf{a}} \in \mathbb{R}^{n_{\mathrm{e}}}$ satisfies {the following feasibility conditions:}
    {\begin{itemize}
    \item $\tilde{\mathbf{a}} \ge \mathbf{0}$,
    \item $\mathbf{f}_j \in \mathrm{Im}\left(\mathbf{K}_j (\tilde{\mathbf{a}})\right)$ for all $j \in \mathcal{L}_\mathrm{s}$,
    \item $\mathrm{Im}\left(\mathbf{M}_j(\tilde{\mathbf{a}})\right) \subseteq \mathrm{Im}\left(\mathbf{K}_j(\tilde{\mathbf{a}})\right)$ for all $j \in \mathcal{L}_\mathrm{fv}$,
    \item $\mathbf{W}_j^\mathrm{T}\left[\hat{\mathbf{K}}_{j}^{\langle 1\rangle} - \underline{\lambda}_j \hat{\mathbf{M}}_{j}^{\langle 1\rangle} \right]\mathbf{W}_j \succeq 0$ for all $j \in \mathcal{L}_\mathrm{fv}$,
    \end{itemize}}%
    \noindent where $\mathbf{W}_j$ is an orthonormal basis of $\mathrm{Ker}\left(\hat{\mathbf{K}}_j^{\langle 2\rangle} + \hat{\mathbf{K}}_j^{\langle 3\rangle}\right) \cap \mathrm{Im}\left(\hat{\mathbf{M}}_j^{\langle 0\rangle} + \hat{\mathbf{M}}_j^{\langle 1\rangle}\right)$.
\end{proposition}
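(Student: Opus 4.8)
The plan is to prove the two implications of the equivalence separately, reducing each to results already established. Two elementary facts will be used throughout. First, for $\delta>0$ the matrix $\mathbf{K}_j(\delta\tilde{\mathbf{a}})=\delta\hat{\mathbf{K}}_j^{\langle 1\rangle}+\delta^2\hat{\mathbf{K}}_j^{\langle 2\rangle}+\delta^3\hat{\mathbf{K}}_j^{\langle 3\rangle}$ is a positive combination of the fixed positive semidefinite matrices $\hat{\mathbf{K}}_j^{\langle i\rangle}$, so that $\mathrm{Im}(\mathbf{K}_j(\delta\tilde{\mathbf{a}}))=\sum_i\mathrm{Im}(\hat{\mathbf{K}}_j^{\langle i\rangle})=\mathrm{Im}(\mathbf{K}_j(\tilde{\mathbf{a}}))$ is independent of $\delta>0$, and likewise $\mathrm{Im}(\mathbf{M}_j(\delta\tilde{\mathbf{a}}))=\mathrm{Im}(\mathbf{M}_j(\tilde{\mathbf{a}}))$ and $\mathrm{Ker}(\mathbf{M}_j(\delta\tilde{\mathbf{a}}))=\mathrm{Ker}(\hat{\mathbf{M}}_j^{\langle 0\rangle}+\hat{\mathbf{M}}_j^{\langle 1\rangle})$; second, $\mathbf{K}_j(\delta\tilde{\mathbf{a}})$ is monotone non-decreasing in $\delta$ on $[0,\infty)$ in the positive semidefinite order. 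For the direction ($\Rightarrow$) I would argue as follows: if the feasible set of \eqref{eq:bilevel-lower} has a non-empty interior it contains some $\delta^\star>0$ (the point $\delta=0$ can never be interior because of the constraint $\delta\ge0$), and then $\delta^\star\tilde{\mathbf{a}}\ge\mathbf{0}$ gives $\tilde{\mathbf{a}}\ge\mathbf{0}$, Proposition \ref{lemma:statical_admissibility} applied to the compliance block at $\delta^\star$ gives $\mathbf{f}_j\in\mathrm{Im}(\mathbf{K}_j(\delta^\star\tilde{\mathbf{a}}))=\mathrm{Im}(\mathbf{K}_j(\tilde{\mathbf{a}}))$ for $j\in\mathcal{L}_\mathrm{s}$, Proposition \ref{lemma:dynamic_admissibility} applied to the free-vibration block gives $\mathrm{Im}(\mathbf{M}_j(\tilde{\mathbf{a}}))\subseteq\mathrm{Im}(\mathbf{K}_j(\tilde{\mathbf{a}}))$ for $j\in\mathcal{L}_\mathrm{fv}$, and since $\delta^\star$ itself witnesses the existence of a $\delta$ with $\mathbf{K}_j(\delta\tilde{\mathbf{a}})-\underline{\lambda}_j\mathbf{M}_j(\delta\tilde{\mathbf{a}})\succeq0$, Proposition \ref{prop:scaling} supplies the membrane condition. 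This establishes all four conditions.

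For the direction ($\Leftarrow$) the idea is to show that every sufficiently large $\delta$ is feasible in \eqref{eq:bilevel-lower}, so that its feasible set contains an open ray. For each $j\in\mathcal{L}_\mathrm{fv}$, Proposition \ref{prop:scaling} (applied with the three conditions $\tilde{\mathbf{a}}\ge\mathbf{0}$, mass admissibility, and the membrane condition) yields a finite $\delta_j$ with $\mathbf{K}_j(\delta_j\tilde{\mathbf{a}})-\underline{\lambda}_j\mathbf{M}_j(\delta_j\tilde{\mathbf{a}})\succeq0$; rewriting this matrix inequality as $\lambda_{\min,j}(\delta\tilde{\mathbf{a}})\ge\underline{\lambda}_j$ (using $\mathrm{Ker}(\mathbf{M}_j(\delta\tilde{\mathbf{a}}))=\mathrm{Ker}(\hat{\mathbf{M}}_j^{\langle 0\rangle}+\hat{\mathbf{M}}_j^{\langle 1\rangle})$) and invoking the monotonicity of $\lambda_{\min,j}$ from Proposition \ref{prop:monotonic}, it persists for every $\delta\ge\delta_j$. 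For each $j\in\mathcal{L}_\mathrm{s}$ I would bound the compliance of the scaled structure: since $\delta^i\ge\delta$ for $\delta\ge1$ and the $\hat{\mathbf{K}}_j^{\langle i\rangle}$ are positive semidefinite, $\mathbf{K}_j(\delta\tilde{\mathbf{a}})\succeq\delta\,\mathbf{K}_j(\tilde{\mathbf{a}})$; restricting to the common image $\mathrm{Im}(\mathbf{K}_j(\tilde{\mathbf{a}}))\ni\mathbf{f}_j$ then gives $\mathbf{f}_j^\mathrm{T}\mathbf{K}_j(\delta\tilde{\mathbf{a}})^{\dagger}\mathbf{f}_j\le\frac{1}{\delta}\,\mathbf{f}_j^\mathrm{T}\mathbf{K}_j(\tilde{\mathbf{a}})^{\dagger}\mathbf{f}_j\to0$ as $\delta\to\infty$, where $(\cdot)^{\dagger}$ denotes the Moore--Penrose pseudoinverse. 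Hence for all $\delta$ beyond some $\delta_0$ the generalized Schur complement inequality $\overline{c}_j-\mathbf{f}_j^\mathrm{T}\mathbf{K}_j(\delta\tilde{\mathbf{a}})^{\dagger}\mathbf{f}_j\ge0$ holds, which together with $\mathbf{f}_j\in\mathrm{Im}(\mathbf{K}_j(\delta\tilde{\mathbf{a}}))$ and $\mathbf{K}_j(\delta\tilde{\mathbf{a}})\succeq0$ is exactly the compliance block of \eqref{eq:bilevel-lower}. Taking $\delta>\max\{\delta_0,\max_{j\in\mathcal{L}_\mathrm{fv}}\delta_j\}$ then satisfies every block of \eqref{eq:bilevel-lower} together with $\delta\ge0$, so the feasible set of \eqref{eq:bilevel-lower} contains an open ray $(\,\cdot\,,\infty)$ and therefore has a non-empty interior.

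The auxiliary facts I would need to spell out are routine: the equivalence of the matrix inequality $\mathbf{K}_j(\delta\tilde{\mathbf{a}})-\underline{\lambda}_j\mathbf{M}_j(\delta\tilde{\mathbf{a}})\succeq0$ with $\lambda_{\min,j}(\delta\tilde{\mathbf{a}})\ge\underline{\lambda}_j$ via the Rayleigh quotient, the monotonicity of the pseudoinverse under the positive semidefinite order on a fixed image ($0\preceq A\preceq B$ with $\mathrm{Im}(A)=\mathrm{Im}(B)$ implies $B^{\dagger}\preceq A^{\dagger}$ on that image), and the generalized Schur complement characterization of the compliance block. The hard part will be the compliance piece of the ($\Leftarrow$) direction, namely establishing that the compliance of $\delta\tilde{\mathbf{a}}$ vanishes as $\delta\to\infty$ so that the compliance blocks become feasible for all large $\delta$; by contrast the free-vibration part is a direct repackaging of Propositions \ref{prop:scaling} and \ref{prop:monotonic}, and the ($\Rightarrow$) direction follows immediately from Propositions \ref{lemma:statical_admissibility}, \ref{lemma:dynamic_admissibility}, and \ref{prop:scaling}.
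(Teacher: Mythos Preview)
Your proposal is correct and follows essentially the same approach as the paper: both prove $(\Leftarrow)$ by showing that each constraint type is satisfied above some threshold (via Proposition~\ref{prop:scaling} and monotonicity for free vibrations, via decaying compliance for statics), then take the maximum threshold so that the feasible set contains a half-line; and both prove $(\Rightarrow)$ by applying Propositions~\ref{lemma:statical_admissibility}, \ref{lemma:dynamic_admissibility}, and \ref{prop:scaling} at a feasible $\delta^\star>0$. The only difference is that where the paper invokes an external reference (\cite[Proposition~5]{Tyburec2023}) for the existence of a finite compliance threshold $\delta^*_{\mathrm{c},j}$, you supply a self-contained argument via the generalized Schur complement and the pseudoinverse bound $\mathbf{f}_j^{\mathrm{T}}\mathbf{K}_j(\delta\tilde{\mathbf{a}})^{\dagger}\mathbf{f}_j\le\frac{1}{\delta}\mathbf{f}_j^{\mathrm{T}}\mathbf{K}_j(\tilde{\mathbf{a}})^{\dagger}\mathbf{f}_j$ for $\delta\ge1$.
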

\begin{proof}
	Based on \cite[Proposition 5]{Tyburec2023}, for $\mathbf{f}_j \in \mathrm{Im}\left(\mathbf{K}_j (\tilde{\mathbf{a}})\right)$ there exists a finite $\delta_{\mathrm{c},j}^*>0$ such that the static matrix inequality in \eqref{eq:bilevel-lower} is satisfied. Conversely, for $\mathbf{f}_j \notin \mathrm{Im}\left(\mathbf{K}_j (\tilde{\mathbf{a}})\right)$, the optimization problem \eqref{eq:bilevel-lower} is infeasible.
	
	Using Proposition \ref{prop:scaling}, there exists $\delta_{\mathrm{fv},j}^*>0$ such that the free-vibration constraint is satisfied if and only if $\mathrm{Im}\left(\mathbf{M}_j(\tilde{\mathbf{a}})\right) \subseteq \mathrm{Im}\left(\mathbf{K}_j(\tilde{\mathbf{a}})\right)$ and $\mathbf{W}_j^\mathrm{T}\left[\hat{\mathbf{K}}_{j}^{\langle 1\rangle} - \underline{\lambda}_j \hat{\mathbf{M}}_{j}^{\langle 1\rangle} \right]\mathbf{W}_j \succeq 0$.
    Finally, there exists $\delta^* = \max\{\max_{j{\in \mathcal{L}_\mathrm{s}}} \delta^*_{\mathrm{c},j}, \max_{j{\in \mathcal{L}_\mathrm{fv}}} \delta^*_{\mathrm{fv},j} \}>0$ such that the free-vibration and compliance constraints are satisfied if and only if the {right-hand-side} conditions of the proposition are satisfied. This is (i) because we have upper bounds for the compliance $\overline{c}_j$ and compliance is a nonincreasing function in $\delta$ \cite[Proposition 1]{Tyburec2023}, and (ii) because we have lower bounds for the lowest free-vibration eigenvalues $\underline{\lambda}_j$ and the lowest free-vibration eigenvalue function is a nondecreasing function in $\delta$ (Proposition \ref{prop:monotonic}).
\end{proof}

The {feasibility} conditions of Proposition \ref{th:feasibility_cond} allow for a representation using a linear semidefinite program, which provides an efficient procedure for finding a feasible point to the bilevel problem \eqref{eq:bilevel}.

\begin{lemma}\label{lemma:initial_ub}
	Let $0<\gamma$ be a (small enough) positive number and let
	\begin{subequations}\label{eq:linprob}
	\begin{align}
		\tilde{\mathbf{a}}^* \in \argmin_{\tilde{\mathbf{a}}\in\mathbb{R}^{n_\mathrm{e}}} \; &  \sum_{e=1}^{n_\mathrm{e}} \rho_e \ell_e \tilde{a}_e\\
		\mathrm{s.t.}\; & \mathbf{W}_j^\mathrm{T}\left[\hat{\mathbf{K}}_{j}^{\langle 1\rangle}(\tilde{\mathbf{a}}) - \underline{\lambda}_j \hat{\mathbf{M}}_{j}^{\langle 1\rangle}(\tilde{\mathbf{a}}) \right]\mathbf{W}_j \succeq 0, \quad j \in {\mathcal{L}_\mathrm{fv}}\\
		& \tilde{\mathbf{K}}_j(\tilde{\mathbf{a}}) - \gamma\mathbf{M}_j(\tilde{\mathbf{a}}) \succeq 0, \quad j \in {\mathcal{L}_\mathrm{fv}}\label{eq:constr_mineig}\\
		& \begin{pmatrix}
			\gamma^{-1} & \mathbf{f}_j^\mathrm{T}\\
			\mathbf{f}_j & \tilde{\mathbf{K}}_j(\tilde{\mathbf{a}})
		\end{pmatrix} \succeq 0, \quad j \in {\mathcal{L}_\mathrm{s}}\\
		& \tilde{\mathbf{a}} \ge \mathbf{0},
	\end{align}
	with the linearized stiffness matrix $\tilde{\mathbf{K}}_j (\tilde{\mathbf{a}}) = \sum_{i=1}^3\sum_{e=1}^{n_\mathrm{e}} \tilde{a}_e\mathbf{K}_{j,e}^{\langle i\rangle}$. Then, there exists $\delta$ such that the pair $(\tilde{\mathbf{a}}^*$, $\delta)$ is a feasible point to \eqref{eq:bilevel}.
\end{subequations}
\end{lemma}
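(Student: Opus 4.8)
The plan is to show that the linear SDP \eqref{eq:linprob} is designed precisely so that any optimal (indeed any feasible) $\tilde{\mathbf{a}}^*$ satisfies the four bullet conditions of Proposition~\ref{th:feasibility_cond}; the existence of a scaling $\delta$ then follows directly from that proposition, which in turn yields feasibility of the pair $(\tilde{\mathbf{a}}^*,\delta)$ in the bilevel problem \eqref{eq:bilevel} because constraints \eqref{eq:bilevel-atilde}--\eqref{eq:bilevel-membraine} coincide with those conditions, and $\delta$ feasible in the lower level \eqref{eq:bilevel-lower} completes the pair. So the whole argument reduces to: \emph{feasible point of \eqref{eq:linprob} $\Rightarrow$ the four feasibility conditions hold}.

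I would verify the conditions one by one. The nonnegativity $\tilde{\mathbf{a}}^*\ge\mathbf{0}$ is an explicit constraint of \eqref{eq:linprob}. The membrane condition $\mathbf{W}_j^\mathrm{T}[\hat{\mathbf{K}}_j^{\langle1\rangle}-\underline{\lambda}_j\hat{\mathbf{M}}_j^{\langle1\rangle}]\mathbf{W}_j\succeq0$ is also an explicit constraint (note that $\hat{\mathbf{K}}_j^{\langle1\rangle}(\tilde{\mathbf{a}})=\sum_e\tilde a_e\mathbf{K}_{j,e}^{\langle1\rangle}$ and $\hat{\mathbf{M}}_j^{\langle1\rangle}(\tilde{\mathbf{a}})=\sum_e\tilde a_e\mathbf{M}_{j,e}^{\langle1\rangle}$ are exactly the matrices appearing in the bullet). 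For the static admissibility $\mathbf{f}_j\in\mathrm{Im}(\mathbf{K}_j(\tilde{\mathbf{a}}^*))$: the constraint $\left(\begin{smallmatrix}\gamma^{-1}&\mathbf{f}_j^\mathrm{T}\\\mathbf{f}_j&\tilde{\mathbf{K}}_j(\tilde{\mathbf{a}})\end{smallmatrix}\right)\succeq0$ forces $\mathbf{f}_j\in\mathrm{Im}(\tilde{\mathbf{K}}_j(\tilde{\mathbf{a}}^*))$ by the Schur complement/range characterization of PSD $2\times2$ block matrices (as in Proposition~\ref{lemma:statical_admissibility}). The key observation is that $\mathrm{Im}(\tilde{\mathbf{K}}_j(\tilde{\mathbf{a}}^*))=\mathrm{Im}(\mathbf{K}_j(\tilde{\mathbf{a}}^*))$ whenever $\tilde{\mathbf{a}}^*\ge\mathbf 0$: since all $\mathbf{K}_{j,e}^{\langle i\rangle}\succeq0$, both $\tilde{\mathbf{K}}_j(\tilde{\mathbf{a}}^*)=\sum_{i,e}\tilde a_e^*\mathbf{K}_{j,e}^{\langle i\rangle}$ and $\mathbf{K}_j(\tilde{\mathbf{a}}^*)=\sum_{i,e}(\tilde a_e^*)^i\mathbf{K}_{j,e}^{\langle i\rangle}$ are nonnegative combinations of the \emph{same} family of PSD matrices with strictly positive coefficients exactly on the same index set $\{(i,e):\tilde a_e^*>0\}$, so their ranges (= the span of the images of the active summands) agree. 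Hence $\mathbf{f}_j\in\mathrm{Im}(\mathbf{K}_j(\tilde{\mathbf{a}}^*))$. The same range-equality argument applied to the constraint \eqref{eq:constr_mineig}, $\tilde{\mathbf{K}}_j(\tilde{\mathbf{a}})-\gamma\mathbf{M}_j(\tilde{\mathbf{a}})\succeq0$, gives (via Proposition~\ref{lemma:dynamic_admissibility} with $\underline\lambda_j$ replaced by $\gamma$) $\mathrm{Im}(\mathbf{M}_j(\tilde{\mathbf{a}}^*))\subseteq\mathrm{Im}(\tilde{\mathbf{K}}_j(\tilde{\mathbf{a}}^*))=\mathrm{Im}(\mathbf{K}_j(\tilde{\mathbf{a}}^*))$, which is the third bullet.

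Once all four bullets are in hand, Proposition~\ref{th:feasibility_cond} guarantees that \eqref{eq:bilevel-lower} has a nonempty interior, so in particular there exists $\delta\ge0$ feasible in the lower-level problem; picking such a $\delta$ (e.g.\ the minimizer, which exists by quasi-convexity and the monotonicity/continuity established in Propositions~\ref{prop:monotonic} and \ref{prop:scaling}) makes $(\tilde{\mathbf{a}}^*,\delta)$ feasible in \eqref{eq:bilevel}. I expect the only genuinely delicate point to be the range-equality $\mathrm{Im}(\tilde{\mathbf{K}}_j(\tilde{\mathbf{a}}^*))=\mathrm{Im}(\mathbf{K}_j(\tilde{\mathbf{a}}^*))$, i.e.\ arguing carefully that replacing the exponents $(\tilde a_e^*)^i$ by $\tilde a_e^*$ does not change the image — this hinges on the sign pattern (all summands PSD) and on the fact that $\tilde a_e^*>0 \iff (\tilde a_e^*)^i>0$; I would state it as a short lemma or inline remark. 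The role of "$\gamma$ small enough" is merely to ensure \eqref{eq:linprob} is feasible at all (any $\tilde{\mathbf{a}}$ feasible in the original problem works for sufficiently small $\gamma$ by Proposition~\ref{prop:exists_gamma_fv}), so I would mention it but not dwell on it, since the statement only asserts the conditional "if such $\tilde{\mathbf{a}}^*$ exists, then \ldots".
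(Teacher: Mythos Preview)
Your proposal is correct and follows essentially the same route as the paper's proof: establish the four feasibility conditions of Proposition~\ref{th:feasibility_cond} from the constraints of \eqref{eq:linprob}, with the crux being the range equality $\mathrm{Im}(\tilde{\mathbf{K}}_j(\tilde{\mathbf{a}}^*))=\mathrm{Im}(\mathbf{K}_j(\tilde{\mathbf{a}}^*))$, then invoke Propositions~\ref{lemma:statical_admissibility} and~\ref{lemma:dynamic_admissibility}. The paper's proof is terser---it simply asserts $\mathrm{Span}(\mathbf{K}_j(\tilde{\mathbf{a}}))=\mathrm{Span}(\tilde{\mathbf{K}}_j(\tilde{\mathbf{a}}))$ without the sign-pattern justification you spell out---so your version is, if anything, more complete on the one nontrivial step.
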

\begin{proof}
	Because of $\mathrm{Span}(\mathbf{K}_j(\mathbf{\tilde{a}})) = \mathrm{Span}(\tilde{\mathbf{K}}_j(\mathbf{\tilde{a}}))$, we have $\mathbf{f}_j \in \mathrm{Im}\left(\mathbf{K}_j(\mathbf{\tilde{a}})\right)$ and $\mathrm{Im}\left(\mathbf{M}_j(\tilde{\mathbf{a}})\right) \subseteq \mathrm{Im}\left(\mathbf{K}_j(\mathbf{\tilde{a}})\right)$, based on Propositions \ref{lemma:statical_admissibility} and \ref{lemma:dynamic_admissibility}. Together with the linear constraint \eqref{eq:constr_mineig}, the {feasibility} conditions of Proposition \ref{th:feasibility_cond} are satisfied, so that there exists $\delta$ such that the pair $(\delta$, $\tilde{\mathbf{a}}^*)$ is feasible for \eqref{eq:bilevel}.
\end{proof}

Using $\tilde{\mathbf{a}}$ that satisfies the {feasibility} conditions of Proposition \ref{th:feasibility_cond}, the optimal solution $\delta^*$ of {\eqref{eq:bilevel-lower}} can be found using a bisection-type algorithm, since both types of matrix inequalities in \eqref{eq:bilevel-lower} are monotonic in $\delta$ (Proposition \ref{prop:monotonic} and \cite[Propositon 1]{Tyburec2023}). In fact, \eqref{eq:bilevel-lower} is therefore a quasiconvex optimization problem because $\delta$ is a minimized upper bound on monotonic functions.

\subsection{Moment-SOS}\label{sec:msos}

Let us now return to the single-level nonlinear semidefinite programming formulation \eqref{eq:opt} which is, after three small modifications, suitable for the deployment of the moment-sum-of-squares hierarchy. These modifications involve:
\begin{enumerate}
	\item bounding the design variables to make the feasible space compact to satisfy Assumption \ref{as:archimedean} that is needed for the convergence of the hierarchy, and to tighten the feasible space of the relaxations \cite[Appendix 2]{Tyburec2021};
	\item scaling the design variables to the $[-1,1]$ domain for numerically reliable solution;
        \item adding a weight constraint that tightens the feasible space of the moments in relaxations of degree greater than one.
\end{enumerate}
{Let us start with the first modification.} Since the objective function \eqref{eq:opt_objective}, representing the weight of the structure, is a conic combination of nonnegative variables, any feasible point to the optimization problem \eqref{eq:opt} provides an upper-bound on  the objective function value $\overline{w}$ that also bounds the cross-section areas from above. This upper bound can be obtained by solving the convex problem \eqref{eq:linprob}, providing the cross-sectional ratios $\tilde{\mathbf{a}}$, and{, subsequently,} by finding the minimal scaling factor $\delta^*$ by solving the lower-level problem \eqref{eq:bilevel-lower} globally using bisection. Consequently, the optimal cross-section areas are bounded as
\begin{equation}
	0 \le a_e \le \frac{\overline{w}}{\rho_e\ell_e}, \forall e \in \{1,\dots,n_\mathrm{e}\}.
\end{equation}
Furthermore, using the substitution $a_e = \frac{\overline{w}}{2\rho_e\ell_e} (a_{\mathrm{s},e} + 1)$, these bounds can be written as $a_{\mathrm{s},e}^2 \le 1$. 
%
Similarly, we add a weight constraint $\sum_{e=1}^{n_\mathrm{e}} \rho_e \ell_e a_e \le \overline{w}$ to strengthen the relaxations.
The influence of this constraint on the feasible set of relaxations will be illustrated in Section~\ref{sec:illustrative}. Using the above substitution of $a_e$, we obtain the formulation
\begin{subequations}\label{eq:opt_po}
	\begin{align}
		\min_{\mathbf{a}_{{\mathrm{s}}}\in\mathbb{R}^{n_\mathrm{e}}}\; & {\frac{1}{2}} \overline{w} (n_\mathrm{e} + \mathbf{1}^\mathrm{T} \mathbf{a}_\mathrm{s})\label{eq:opt_objective_po}\\
		\text{s.t.}\; & \mathbf{K}_j(\mathbf{a}_\mathrm{s}) - \underline{\lambda}_j \mathbf{M}_j(\mathbf{a}_\mathrm{s}) \succeq 0, \quad j \in {\mathcal{L}_\mathrm{fv}}\label{eq:opt_fv_po}\\
		& \begin{pmatrix}
			\overline{c}_j & -\mathbf{f}_j^\mathrm{T}\\
			-\mathbf{f}_j & \mathbf{K}_j(\mathbf{a}_\mathrm{s})
		\end{pmatrix} \succeq 0,\quad j \in {\mathcal{L}_\mathrm{s}}\label{eq:opt_static_po}\\
            & 2-n_\mathrm{e} - \sum_{e=1}^{n_\mathrm{e}} a_{\mathrm{s},e} \ge 0,\label{eq:opt_weight_po}\\
		& 1 - a_\mathrm{s,e}^2 \ge 0, \quad e \in \{1,\dots,n_\mathrm{e}\}\label{eq:opt_nonneg_po}
	\end{align}
\end{subequations}
{in the scaled variables $\mathbf{a}_{\mathrm{s}} \in \mathbb{R}^{n_\mathrm{e}}$. Here,} we slightly abuse the notation and express the stiffness and mass matrices in terms of the scaled variables $\mathbf{a}_\mathrm{s}$ as $\mathbf{K}_j(\mathbf{a}_\mathrm{s})$ and $\mathbf{M}_j(\mathbf{a}_\mathrm{s})$, respectively. {Although this compactification means that \eqref{eq:opt} and \eqref{eq:opt_po} do not have the same feasible set (after rescaling), they maintain the same set of optimal solutions. Thus, solution to \eqref{eq:opt_po} also solves \eqref{eq:opt}.}

Because of the quadratic bound constraints \eqref{eq:opt_nonneg_po}, the optimization problem \eqref{eq:opt_po} satisfies Assumption \ref{as:archimedean} for the convergence of the moment-sum-of-squares hierarchy:

\begin{proposition}
The optimization problem \eqref{eq:opt_po} satisfies Assumption \ref{as:archimedean}.
\end{proposition}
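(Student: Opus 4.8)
The plan is to verify Assumption~\ref{as:archimedean} directly by exhibiting explicit SOS certificates that make the relevant superlevel set compact. The key observation is that the constraint \eqref{eq:opt_nonneg_po} already encodes $a_{\mathrm{s},e}^2 \le 1$ for every $e$, so the feasible set of \eqref{eq:opt_po} is contained in the box $[-1,1]^{n_\mathrm{e}}$, which is compact. What remains is to package this boundedness into the precise algebraic form required by Assumption~\ref{as:archimedean}, namely to produce an SOS polynomial $p_0(\mathbf{a}_\mathrm{s})$ and an SOS matrix polynomial $\bm{\Sigma}(\mathbf{a}_\mathrm{s})$ such that $\{\mathbf{a}_\mathrm{s} : p_0(\mathbf{a}_\mathrm{s}) + \langle \bm{\Sigma}(\mathbf{a}_\mathrm{s}), \mathbf{G}(\mathbf{a}_\mathrm{s})\rangle \ge 0\}$ is compact, where $\mathbf{G}(\mathbf{a}_\mathrm{s})$ is the block-diagonal matrix collecting all constraints \eqref{eq:opt_fv_po}--\eqref{eq:opt_nonneg_po} of the problem.

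First I would write $\mathbf{G}(\mathbf{a}_\mathrm{s}) = \mathrm{diag}\bigl(\mathbf{G}_\mathrm{fv}, \mathbf{G}_\mathrm{s}, \mathbf{G}_\mathrm{w}, \mathbf{G}_\mathrm{b}\bigr)$, where $\mathbf{G}_\mathrm{b}$ is the diagonal block with entries $1 - a_{\mathrm{s},e}^2$, $e \in \{1,\dots,n_\mathrm{e}\}$, and the other blocks correspond to \eqref{eq:opt_fv_po}, \eqref{eq:opt_static_po}, and \eqref{eq:opt_weight_po}. Then I would choose $\bm{\Sigma}(\mathbf{a}_\mathrm{s})$ to be the constant (hence trivially SOS) selector matrix that picks out exactly the $\mathbf{G}_\mathrm{b}$ block and sums its diagonal, i.e.\ $\langle \bm{\Sigma}(\mathbf{a}_\mathrm{s}), \mathbf{G}(\mathbf{a}_\mathrm{s})\rangle = \sum_{e=1}^{n_\mathrm{e}} (1 - a_{\mathrm{s},e}^2) = n_\mathrm{e} - \lVert \mathbf{a}_\mathrm{s}\rVert^2$, with zero weight on all other blocks. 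Taking $p_0(\mathbf{a}_\mathrm{s}) \equiv 0$ (which is SOS), the superlevel set becomes $\{\mathbf{a}_\mathrm{s} : n_\mathrm{e} - \lVert\mathbf{a}_\mathrm{s}\rVert^2 \ge 0\}$, the closed ball of radius $\sqrt{n_\mathrm{e}}$, which is compact. This establishes the claim.

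I would also note why the quadratic formulation $1 - a_{\mathrm{s},e}^2 \ge 0$ (as opposed to keeping linear bounds $a_e \ge 0$ and $a_e \le \overline{w}/(\rho_e\ell_e)$) is what makes this work cleanly: it is precisely these degree-two terms that let a \emph{constant} multiplier $\bm{\Sigma}$ produce the negative-definite leading form $-\lVert\mathbf{a}_\mathrm{s}\rVert^2$ needed to bound the superlevel set, which is the standard ball-constraint trick for verifying the Archimedean property. The only mild subtlety is purely bookkeeping: confirming that the Frobenius inner product of a block-diagonal SOS matrix with $\mathbf{G}$ picks out exactly the intended diagonal entries and that constant positive semidefinite matrices qualify as SOS matrix polynomials in the sense of Definition~\ref{def:sos} (they do, with $\mathbf{b}_d$ the degree-zero basis). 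I expect no real obstacle here; the statement is essentially a sanity check that the compactification in the third modification was done in an Archimedean-friendly way.
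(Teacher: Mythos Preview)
Your proposal is correct and follows essentially the same argument as the paper: both select the diagonal block corresponding to the constraints $1-a_{\mathrm{s},e}^2\ge 0$ via a constant positive semidefinite (hence SOS) multiplier, take $p_0\equiv 0$, and observe that the resulting superlevel set $\{\mathbf{a}_\mathrm{s}:n_\mathrm{e}-\lVert\mathbf{a}_\mathrm{s}\rVert^2\ge 0\}$ is a compact ball. The paper writes this multiplier explicitly as $\mathbf{H}=\mathrm{diag}(\mathbf{0},\mathbf{I})$ and notes $\mathbf{H}=\mathbf{H}\mathbf{H}^\mathrm{T}$ to verify it is SOS, which is exactly your ``constant selector matrix''.
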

\begin{proof}
	Let $\mathbf{\mathbf{G}}(\mathbf{a_\mathrm{s}}) \in \mathbb{S}_{\succeq 0}$ be a block-diagonal matrix with the blocks being the left-hand sides of the constraints \eqref{eq:opt_fv_po}--\eqref{eq:opt_nonneg_po}. Further, let $\mathbf{H} \in \mathbb{S}_{\succeq 0}$,
	$$\mathbf{H} = \begin{pmatrix}
		\mathbf{0} & \mathbf{0}\\
		\mathbf{0} & \mathbf{I}
	\end{pmatrix},$$
	be of the same dimensions and containing the identity matrix $\mathbf{I} \in \mathbb{S}^{n_\mathrm{e}}_{\succ 0}$, where the position of $\mathbf{I}$ in $\mathbf{H}$ matches the position of \eqref{eq:opt_nonneg_po} in $\mathbf{G}(\mathbf{a}_\mathrm{s})$.
	
	Because $\mathbf{H}$ is a matrix SOS polynomial according to Definition \ref{def:sos}, due to $\mathbf{H} = \mathbf{H}\mathbf{H}^\mathrm{T}$, we have
	\begin{equation}
		p(\mathbf{a}_\mathrm{s}) := \langle \mathbf{H}\mathbf{H}^\mathrm{T}, \mathbf{G}(\mathbf{a}_\mathrm{s}) \rangle = n_\mathrm{e} - \sum_{e=1}^{n_\mathrm{e}} a^2_{\mathrm{s},e},
	\end{equation}
	which shows that{, because of \eqref{eq:opt_nonneg_po},} the superlevel set $\left\{ \mathbf{a}_\mathrm{s}\in \mathbb{R}^{n_\mathrm{e}} \; \lvert \; p(\mathbf{a}_\mathrm{s}) \ge 0 \right\}$ is compact.
\end{proof}

\subsubsection{Feasible upper bounds from first-order moments}
In what follows, we show that the first-order moments obtained by solving any relaxation of \eqref{eq:opt_po} satisfy the {feasibility conditions} set in Proposition \ref{th:feasibility_cond}, which allows us to construct a feasible upper-bound solution to \eqref{eq:opt}.

To show this, let us introduce the notation
\begin{subequations}
	\begin{align}
		a_{\mathrm{rel},e}^{\langle 1\rangle}(\mathbf{y}) &= 0.5\,{\frac{\overline{w}}{\rho_e \ell_e}} (y_{a_{\mathrm{s},e}^1} + 1),\label{eq:rel_a_1}\\
		a_{\mathrm{rel},e}^{\langle 2\rangle}(\mathbf{y}) &= 0.25\, {\frac{\overline{w}^2}{\rho_e^2 \ell_e^2}} \left( y_{a_{\mathrm{s},e}^2} + 2 y_{a_{\mathrm{s},e}^1} + 1 \right),\label{eq:rel_a_2}\\
		a_{\mathrm{rel},e}^{\langle 3\rangle}(\mathbf{y}) &= 0.125\, {\frac{\overline{w}^3}{\rho_e^3 \ell_e^3}} \left( y_{a_{\mathrm{s},e}^3} + 3 y_{a_{\mathrm{s},e}^2} + 3 y_{a_{\mathrm{s},e}^1} + 1 \right)\label{eq:rel_a_3}
	\end{align}
\end{subequations}
for evaluating {expected values of the} monomials of the (unscaled) cross-section areas based on the (scaled) moments $\mathbf{y}$. In this notation, the superscript $\bullet^{\langle i\rangle}$ indicates that it corresponds to the $i$-th degree monomial of $\bullet$. We wish to emphasize here that, generally, $a_{\mathrm{rel},e}^{\langle 2\rangle}(\mathbf{y}) \neq \left[a_{\mathrm{rel},e}^{\langle 1\rangle}(\mathbf{y})\right]^2$ and $a_{\mathrm{rel},e}^{\langle 3\rangle}(\mathbf{y}) \neq \left[a_{\mathrm{rel},e}^{\langle 1\rangle}(\mathbf{y})\right]^3$ due to the relaxation procedure.

As the first step in the proof, we need to show that replacing higher-order moments with the powers of the first-order moments does not change the rank of the stiffness matrix. Since the stiffness matrix is a conic combination of positive semidefinite matrices with the coefficients being monomials of the cross-section areas, recall \eqref{eq:stiffness}, it suffices to show that strict positivity of $a_{\mathrm{rel},e}^{\langle 3\rangle}(\mathbf{y})$ or $a_{\mathrm{rel},e}^{\langle 2\rangle}(\mathbf{y})$ implies strict positivity of $a_{\mathrm{rel},e}^{\langle 1\rangle}(\mathbf{y})$.

\begin{proposition}\label{prop:inertia}
	Let $\mathbf{y}^{*(r)}$ be the optimal moments associated with the cross-section areas $\mathbf{a}$ obtained by solving the $r$-th degree relaxation. If $a_{\mathrm{rel},e}^{\langle 2\rangle}(\mathbf{y}^{*(r)}) > 0$ or $a_{\mathrm{rel},e}^{\langle 3\rangle}(\mathbf{y}^{*(r)}) > 0$, then $a_{\mathrm{rel},e}^{\langle 1\rangle}(\mathbf{y}^{*(r)}) > 0$.
	\begin{proof}
        By contradiction, let us assume that $a_{\mathrm{rel},e}^{\langle 1\rangle}(\mathbf{y}^{*(r)}) = 0$, i.e., $y_{a_{\mathrm{s},e}^1}^{*(r)} = -1$. Then, we need to show that $a_{\mathrm{rel},e}^{\langle 2\rangle}(\mathbf{y}^{*(r)}) = 0$ and $a_{\mathrm{rel},e}^{\langle 3\rangle}(\mathbf{y}^{*(r)}) = 0$.
		
		Consider first the case of $a_{\mathrm{rel},e}^{\langle 2\rangle}(\mathbf{y}^{*(r)})$. The moment matrix {(recall the definition in \eqref{eq:moment})} then contains the principal submatrix
		\begin{equation}
			\begin{pmatrix}
				1 & y_{a_{\mathrm{s},e}^1}^{*(r)}\\
				y_{a_{\mathrm{s},e}^1}^{*(r)} & y_{a_{\mathrm{s},e}^2}^{*(r)}
			\end{pmatrix} \succeq 0.
		\end{equation}
		The determinant of this matrix must be nonnegative, so that we obtain the inequality
		\begin{equation}
			y_{a_{\mathrm{s},e}^2}^{*(r)} - \left[y_{a_{\mathrm{s},e}^1}^{*(r)}\right]^2 \ge 0.
		\end{equation}
		After inserting $y_{a_{\mathrm{s},e}^1}^{*(r)} = -1$, we obtain $y_{a_{\mathrm{s},e}^2}^{*(r)} \ge 1$. Due to the bound constraints \eqref{eq:opt_nonneg_po}, $y_{a_{\mathrm{s},e}^2}^{*(r)} = 1$. Using $y_{a_{\mathrm{s},e}^1}^{*(r)} = -1$ with $y_{a_{\mathrm{s},e}^2}^{*(r)} = 1$, \eqref{eq:rel_a_2} is then evaluated as $a_{\mathrm{rel},e}^{\langle 2\rangle}(\mathbf{y}^{*(r)})=0$.
		
		For the case of, $a_{\mathrm{rel},e}^{\langle 3\rangle}(\mathbf{y}^{*(r)})$, the moment matrix contains the principal submatrix
		\begin{equation}
			\begin{pmatrix}
				1 & y_{a_{\mathrm{s},e}^1}^{*(r)} & y_{a_{\mathrm{s},e}^2}^{*(r)}\\
				y_{a_{\mathrm{s},e}^1}^{*(r)} & y_{a_{\mathrm{s},e}^2}^{*(r)} & y_{a_{\mathrm{s},e}^3}^{*(r)}\\
				y_{a_{\mathrm{s},e}^2}^{*(r)} & y_{a_{\mathrm{s},e}^3}^{*(r)} & y_{a_{\mathrm{s},e}^4}^{*(r)}
			\end{pmatrix} \succeq 0.
		\end{equation}
		After inserting $y_{a_{\mathrm{s},e}^1}^{*(r)} = -1$ and $y_{a_{\mathrm{s},e}^2}^{*(r)} = 1$, the determinant evaluates as
		\begin{equation}
			- \left(y_{a_{\mathrm{s},e}^3}^{*(r)} + 1\right)^2 \ge 0,
		\end{equation}
		so that $y_{a_{\mathrm{s},e}^3}^{*(r)}=-1$ is the only feasible solution. Inserting $y_{a_{\mathrm{s},e}^1}^{*(r)} = -1$, $y_{a_{\mathrm{s},e}^2}^{*(r)} = 1$, and $y_{a_{\mathrm{s},e}^3}^{*(r)} = -1$ into \eqref{eq:rel_a_3}, we again receive $a_{\mathrm{rel},e}^{\langle 3\rangle}(\mathbf{y}^{*(r)})=0$, which finishes the proof.
	\end{proof}
\end{proposition}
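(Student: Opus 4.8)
The plan is to prove the contrapositive: assuming $a_{\mathrm{rel},e}^{\langle 1\rangle}(\mathbf{y}^{*(r)}) = 0$, I will show that $a_{\mathrm{rel},e}^{\langle 2\rangle}(\mathbf{y}^{*(r)}) = 0$ and $a_{\mathrm{rel},e}^{\langle 3\rangle}(\mathbf{y}^{*(r)}) = 0$ as well. By the definition \eqref{eq:rel_a_1}, the hypothesis $a_{\mathrm{rel},e}^{\langle 1\rangle}(\mathbf{y}^{*(r)}) = 0$ is equivalent to $y_{a_{\mathrm{s},e}^1}^{*(r)} = -1$, so the whole argument reduces to pinning down the values of the low-order moments $y_{a_{\mathrm{s},e}^2}^{*(r)}$ and $y_{a_{\mathrm{s},e}^3}^{*(r)}$ from the semidefinite constraints of the relaxation.

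First I would handle the quadratic moment. Positive semidefiniteness of the moment matrix \eqref{eq:moment} implies that its $2\times 2$ principal submatrix indexed by the monomials $1$ and $a_{\mathrm{s},e}$ has a nonnegative determinant, which gives $y_{a_{\mathrm{s},e}^2}^{*(r)} \ge (y_{a_{\mathrm{s},e}^1}^{*(r)})^2 = 1$. For the reverse inequality I would invoke the localizing constraint \eqref{eq:localizing} generated by the box constraint $1 - a_{\mathrm{s},e}^2 \ge 0$ of \eqref{eq:opt_nonneg_po}: among its entries (which are available since $r \ge d_\mathrm{G}$) is the scalar inequality $1 - y_{a_{\mathrm{s},e}^2}^{*(r)} \ge 0$. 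Combining the two bounds forces $y_{a_{\mathrm{s},e}^2}^{*(r)} = 1$, and substituting $y_{a_{\mathrm{s},e}^1}^{*(r)} = -1$ and $y_{a_{\mathrm{s},e}^2}^{*(r)} = 1$ into \eqref{eq:rel_a_2} yields $a_{\mathrm{rel},e}^{\langle 2\rangle}(\mathbf{y}^{*(r)}) = 0$.

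Next I would lift the same idea by one degree. Because the stiffness matrices \eqref{eq:stiffness} are cubic, the relaxation order always satisfies $r \ge 2$, so the moment matrix contains the $3\times 3$ principal submatrix indexed by $1$, $a_{\mathrm{s},e}$, $a_{\mathrm{s},e}^2$ (its bottom-right entry being the fourth-order moment $y_{a_{\mathrm{s},e}^4}^{*(r)}$). Substituting the values $y_{a_{\mathrm{s},e}^1}^{*(r)} = -1$ and $y_{a_{\mathrm{s},e}^2}^{*(r)} = 1$ already obtained, the determinant of this submatrix collapses to $-(y_{a_{\mathrm{s},e}^3}^{*(r)} + 1)^2$, which must be nonnegative, hence $y_{a_{\mathrm{s},e}^3}^{*(r)} = -1$. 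Plugging these three moment values into \eqref{eq:rel_a_3} gives $a_{\mathrm{rel},e}^{\langle 3\rangle}(\mathbf{y}^{*(r)}) = 0$, which establishes the contrapositive and thus proves the proposition.

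I do not expect a genuine obstacle here: the argument is a short chain of $2\times 2$ and $3\times 3$ determinant inequalities. The only two points that need a little care are (i) recognizing that the required upper bound $y_{a_{\mathrm{s},e}^2}^{*(r)} \le 1$ is supplied by the localizing constraint of the box constraint, not by the moment matrix alone, and (ii) checking that the relaxation order is large enough for the $3\times 3$ submatrix involving a fourth-order moment to be present, which is automatic because the cubic stiffness dependence already forces $r\ge 2$. The pleasant feature that makes the cubic case as clean as the quadratic one is the perfect-square form of that $3\times 3$ determinant after the first two substitutions.
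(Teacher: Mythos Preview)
Your proof is correct and follows essentially the same route as the paper: both argue the contrapositive, use the $2\times 2$ principal minor of the moment matrix together with the localizing constraint from \eqref{eq:opt_nonneg_po} to pin down $y_{a_{\mathrm{s},e}^2}^{*(r)}=1$, and then exploit the perfect-square collapse of the $3\times 3$ determinant to force $y_{a_{\mathrm{s},e}^3}^{*(r)}=-1$. Your added remarks on where exactly the upper bound $y_{a_{\mathrm{s},e}^2}^{*(r)}\le 1$ comes from and on the relaxation order needed for the $3\times 3$ submatrix are useful clarifications that the paper leaves implicit.
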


Using this result, we can now prove that $\tilde{\mathbf{a}} = \mathbf{a}_{\mathrm{rel},e}^{\langle 1\rangle}(\mathbf{y}^{*(r)})$ accommodates the conditions of Proposition \ref{prop:scaling} to satisfy the eigenvalue constraint.

\begin{proposition}\label{th:ub}
	Let $\mathbf{y}^{*(r)}$ be the optimal moments obtained by solving the $r$-th degree relaxation and set $\tilde{\mathbf{a}} := \mathbf{a}_{\mathrm{rel},e}^{\langle 1\rangle}(\mathbf{y}^{*(r)})$. Then, there exists $\delta$ such that
	\begin{equation}\label{eq:scalar_fvineq}
		\mathbf{K}_j(\delta\tilde{\mathbf{a}}) - \underline{\lambda}_j \mathbf{M}_j(\delta\tilde{\mathbf{a}}) \succeq 0\,.
	\end{equation}
\end{proposition}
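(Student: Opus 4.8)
### Proof Plan

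The plan is to verify that $\tilde{\mathbf{a}} := \mathbf{a}_{\mathrm{rel},e}^{\langle 1\rangle}(\mathbf{y}^{*(r)})$ satisfies the two hypotheses of Proposition~\ref{prop:scaling} for each $j \in \mathcal{L}_\mathrm{fv}$: namely (i) $\tilde{\mathbf{a}} \ge \mathbf{0}$ together with $\mathrm{Im}(\mathbf{M}_j(\tilde{\mathbf{a}})) \subseteq \mathrm{Im}(\mathbf{K}_j(\tilde{\mathbf{a}}))$, and (ii) $\mathbf{W}_j^\mathrm{T}[\hat{\mathbf{K}}_j^{\langle 1\rangle} - \underline{\lambda}_j \hat{\mathbf{M}}_j^{\langle 1\rangle}]\mathbf{W}_j \succeq 0$, where $\mathbf{W}_j$ spans $\mathrm{Ker}(\hat{\mathbf{K}}_j^{\langle 2\rangle}+\hat{\mathbf{K}}_j^{\langle 3\rangle}) \cap \mathrm{Im}(\hat{\mathbf{M}}_j^{\langle 0\rangle}+\hat{\mathbf{M}}_j^{\langle 1\rangle})$, all the matrices being evaluated at $\tilde{\mathbf{a}}$. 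Nonnegativity of $\tilde{\mathbf{a}}$ is immediate from \eqref{eq:rel_a_1} and the bound constraint \eqref{eq:opt_nonneg_po} (which forces $y_{a_{\mathrm{s},e}^1}^{*(r)} \in [-1,1]$ via the $2\times 2$ principal minor of the moment matrix).

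For the image-inclusion condition, I would exploit that both $\mathbf{K}_j$ and $\mathbf{M}_j$ are conic combinations of fixed positive semidefinite matrices with coefficients that are the relaxed moments $a_{\mathrm{rel},e}^{\langle i\rangle}(\mathbf{y}^{*(r)})$; hence the image of $\mathbf{K}_j(\tilde{\mathbf{a}})$ (when $\tilde{\mathbf{a}}$ uses first-order data) is the span of those $\mathbf{K}_{j,e}^{\langle i\rangle}$ whose coefficient is positive. Proposition~\ref{prop:inertia} is exactly what is needed: it guarantees that whenever $a_{\mathrm{rel},e}^{\langle 2\rangle}$ or $a_{\mathrm{rel},e}^{\langle 3\rangle}$ is positive, so is $a_{\mathrm{rel},e}^{\langle 1\rangle}$, so the span of $\mathbf{K}_j$ built from the genuine relaxed moments $(a_{\mathrm{rel},e}^{\langle 1\rangle}, a_{\mathrm{rel},e}^{\langle 2\rangle}, a_{\mathrm{rel},e}^{\langle 3\rangle})$ equals the span of $\mathbf{K}_j(\tilde{\mathbf{a}})$ built from powers of $a_{\mathrm{rel},e}^{\langle 1\rangle}$ alone. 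Since the relaxed moments satisfy the localizing constraint from \eqref{eq:opt_fv_po}, the relaxed matrix $\mathbf{K}_j - \underline{\lambda}_j \mathbf{M}_j$ (in relaxed moments) is PSD, and Proposition~\ref{lemma:dynamic_admissibility} gives $\mathrm{Im}(\mathbf{M}_j) \subseteq \mathrm{Im}(\mathbf{K}_j)$ for that matrix; the span identity then transfers this inclusion to $\tilde{\mathbf{a}}$.

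For condition (ii), note that $\hat{\mathbf{K}}_j^{\langle 1\rangle}$ and $\hat{\mathbf{M}}_j^{\langle 1\rangle}$ depend \emph{linearly} on their argument, so $\hat{\mathbf{K}}_j^{\langle 1\rangle}(\tilde{\mathbf{a}}) = \sum_e a_{\mathrm{rel},e}^{\langle 1\rangle}(\mathbf{y}^{*(r)})\mathbf{K}_{j,e}^{\langle 1\rangle}$, and likewise for $\hat{\mathbf{M}}_j^{\langle 1\rangle}$; these coincide with the genuinely relaxed first-order matrices. The relaxation provides the localizing constraint $\mathbf{K}_j - \underline{\lambda}_j \mathbf{M}_j \succeq 0$ in relaxed moments; projecting this PSD matrix by $\mathbf{W}_j$ (which kills the second- and third-order stiffness terms by construction and which — again using Proposition~\ref{prop:inertia} to match the kernels/images of the relaxed and $\tilde{\mathbf{a}}$-evaluated matrices — lies in the right subspace) leaves exactly $\mathbf{W}_j^\mathrm{T}[\hat{\mathbf{K}}_j^{\langle 1\rangle} - \underline{\lambda}_j(\hat{\mathbf{M}}_j^{\langle 0\rangle}+\hat{\mathbf{M}}_j^{\langle 1\rangle})]\mathbf{W}_j \succeq 0$, and since $\mathbf{W}_j$ also annihilates $\hat{\mathbf{M}}_j^{\langle 0\rangle}$ by the image restriction in its definition... actually $\mathbf{W}_j \in \mathrm{Im}(\hat{\mathbf{M}}_j^{\langle 0\rangle}+\hat{\mathbf{M}}_j^{\langle 1\rangle})$, so one must be slightly careful here: the term $\mathbf{W}_j^\mathrm{T}\hat{\mathbf{M}}_j^{\langle 0\rangle}\mathbf{W}_j \succeq 0$ only helps in one direction, so I would mimic the $\Rightarrow$ argument of Proposition~\ref{prop:scaling}, where the $\frac{1}{\delta}$-weighted $\hat{\mathbf{M}}_j^{\langle 0\rangle}$ term is dominated. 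Once both conditions hold, Proposition~\ref{prop:scaling} directly yields the desired $\delta$. The main obstacle I anticipate is the bookkeeping that ensures the kernel/image subspaces (and hence $\mathbf{W}_j$) computed from the relaxed moments agree with those computed from $\tilde{\mathbf{a}}$ — this is precisely where Proposition~\ref{prop:inertia} must be invoked carefully, and one should double-check the case where \emph{all} of $a_{\mathrm{rel},e}^{\langle 1\rangle},a_{\mathrm{rel},e}^{\langle 2\rangle},a_{\mathrm{rel},e}^{\langle 3\rangle}$ vanish (element absent) versus when only the first-order coefficient is nonzero.
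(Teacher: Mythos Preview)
Your proposal is correct and follows essentially the same route as the paper: verify the two hypotheses of Proposition~\ref{prop:scaling} by (a) projecting the relaxed free-vibration inequality \eqref{eq:fvrelaxed} through $\mathbf{W}_j$ to obtain $\mathbf{W}_j^\mathrm{T}[\hat{\mathbf{K}}_j^{\langle 1\rangle}(\tilde{\mathbf{a}}) - \underline{\lambda}_j\hat{\mathbf{M}}_j^{\langle 1\rangle}(\tilde{\mathbf{a}})]\mathbf{W}_j \succeq \underline{\lambda}_j\mathbf{W}_j^\mathrm{T}\hat{\mathbf{M}}_j^{\langle 0\rangle}\mathbf{W}_j \succeq 0$, and (b) using Proposition~\ref{prop:inertia} to establish the span identity \eqref{eq:spans} so that Proposition~\ref{lemma:dynamic_admissibility} transfers the image inclusion from the relaxed moments to $\tilde{\mathbf{a}}$. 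Your hesitation about the $\hat{\mathbf{M}}_j^{\langle 0\rangle}$ term is unnecessary: after projection it simply lands on the right-hand side as a positive semidefinite term (exactly the paper's \eqref{eq:rel_cond1}), so no ``$1/\delta$-domination'' argument is needed here; and your bookkeeping concern about matching $\mathbf{W}_j$ across the relaxed and $\tilde{\mathbf{a}}$-evaluated matrices is legitimate but is indeed settled by Proposition~\ref{prop:inertia}, which forces the supports of $a_{\mathrm{rel},e}^{\langle 1\rangle}$, $a_{\mathrm{rel},e}^{\langle 2\rangle}$, $a_{\mathrm{rel},e}^{\langle 3\rangle}$ to coincide.
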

\begin{proof}
	From the hierarchy construction {\eqref{eq:general_moment}}, the {localizing matrix \eqref{eq:localizing} will always contain $L_\mathbf{y}(1 \otimes \left[\mathbf{K}_j (\mathbf{a}) - \underline{\lambda} \mathbf{M}_j(\mathbf{a})\right])$ as principal submatrix due to} \eqref{eq:opt_fv}{, which} can be written in terms of the lifted variables as
	\begin{multline}\label{eq:fvrelaxed}
		\hat{\mathbf{K}}_j^{\langle 1\rangle}(\tilde{\mathbf{a}}) + \hat{\mathbf{K}}_j^{\langle 2\rangle}\left(\left[\mathbf{a}_{\mathrm{rel}}^{\langle 2\rangle}(\mathbf{y}^{*(r)})\right]^{\circ\frac{1}{2}}\right) + \hat{\mathbf{K}}_j^{\langle 3\rangle}\left(\left[\mathbf{a}_{\mathrm{rel}}^{\langle 3\rangle}(\mathbf{y}^{*(r)})\right]^{\circ\frac{1}{3}}\right) \\
		- \underline{\lambda}_j\left( \hat{\mathbf{M}}_j^{\langle 0\rangle} + \hat{\mathbf{M}}_j^{\langle 1\rangle}(\tilde{\mathbf{a}}) \right)\succeq 0,
	\end{multline}
	with $\bullet^{\circ q}$ raising each element in $\bullet$ to the power of $q$. We note here that the power factors in \eqref{eq:fvrelaxed} eliminate the powers introduced in the notation \eqref{eq:hatnotation}, so that \eqref{eq:fvrelaxed} is indeed linear in $\mathbf{a}_\mathrm{rel}^{(i)}$ for all $i$.
 
    After projecting via an orthonormal basis of $\mathrm{Ker}\left(\hat{\mathbf{K}}_j^{\langle 2\rangle} + \hat{\mathbf{K}}_j^{\langle 3\rangle}\right) \cap \mathrm{Im}\left(\hat{\mathbf{M}}_j^{\langle 0\rangle} + \hat{\mathbf{M}}_j^{\langle 1\rangle}\right)$ denoted with $\mathbf{W}_j$, we receive
	\begin{equation}\label{eq:rel_cond1}
		\mathbf{W}_j^\mathrm{T}\left[\hat{\mathbf{K}}_{j}^{\langle 1\rangle}(\tilde{\mathbf{a}}) - \underline{\lambda}_j \hat{\mathbf{M}}_{j}^{\langle 1\rangle}(\tilde{\mathbf{a}}) \right]\mathbf{W}_j \succeq \underline{\lambda}_j \mathbf{W}_j^\mathrm{T} \hat{\mathbf{M}}_{j}^{\langle 0\rangle}\mathbf{W}_j \succeq 0.
	\end{equation}
	Further, because of Proposition \ref{prop:inertia}, {$a_{\mathrm{rel},e}^{\langle 1 \rangle} \mathbf{K}_{j,e}^{\langle 1\rangle}\neq \mathbf{0}$} whenever {$a_{\mathrm{rel},e}^{\langle2\rangle} \mathbf{K}_{j,e}^{\langle 2\rangle} + a_{\mathrm{rel},e}^{\langle3 \rangle} \mathbf{K}_{j,e}^{\langle 3\rangle} \neq \mathbf{0}$}. Consequently, 
	\begin{multline}\label{eq:spans}
		\mathrm{Span}\left(\hat{\mathbf{K}}_j^{\langle 1\rangle} (\tilde{\mathbf{a}}) + \hat{\mathbf{K}}_j^{\langle 2\rangle}\left(\left[\mathbf{a}_{\mathrm{rel}}^{\langle 2\rangle}\right]^{\circ\frac{1}{2}}\right) + \hat{\mathbf{K}}_j^{\langle 3\rangle}\left(\left[\mathbf{a}_{\mathrm{rel}}^{\langle 3\rangle}\right]^{\circ\frac{1}{3}}\right)\right)\\ = \mathrm{Span}\left(\hat{\mathbf{K}}_j^{\langle 1 \rangle} (\tilde{\mathbf{a}}) + \hat{\mathbf{K}}_j^{\langle 2\rangle}(\tilde{\mathbf{a}}) + \hat{\mathbf{K}}_j^{{\langle 3\rangle}}(\tilde{\mathbf{a}}) \right).
	\end{multline}
	Thus, because \eqref{eq:fvrelaxed} is feasible due to $\mathbf{y}^{*(r)}$ being optimal, it holds based on Proposition \ref{lemma:dynamic_admissibility} and based on \eqref{eq:spans} that 
	\begin{equation}\label{eq:rel_cond2}
		\mathrm{Im}\left(\mathbf{M}_j(\tilde{\mathbf{a}})\right) \subseteq \mathrm{Im}\left(\mathbf{K}_j(\tilde{\mathbf{a}})\right).
	\end{equation}
	Using \eqref{eq:rel_cond1} and \eqref{eq:rel_cond2}, Proposition \ref{prop:scaling} certifies existence of $\delta$ such that the matrix inequality \eqref{eq:scalar_fvineq} holds.
\end{proof}

Having shown that first-order moments can be used to find a feasible solution to the free-vibration inequality, we combine this result with that of \cite{Tyburec2023} to show that upper bounds can still be constructed even if compliance constraints are present.

\begin{proposition}\label{prop:feasibleub}
	Let $\mathbf{y}^{*(r)}$ be the optimal moments obtained by solving the $r$-th degree relaxation and set $\tilde{\mathbf{a}} := \mathbf{a}_{\mathrm{rel},e}^{\langle 1\rangle}(\mathbf{y}^{*(r)})$. Then, there exists a $\delta$ such that $\mathbf{a}(\delta):=\delta\tilde{\mathbf{a}}$ is feasible to the lower-level problem \eqref{eq:bilevel-lower}.
\end{proposition}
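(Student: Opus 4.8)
The plan is to reduce the statement to Proposition~\ref{th:feasibility_cond}: it suffices to verify that $\tilde{\mathbf{a}} = \mathbf{a}_{\mathrm{rel},e}^{\langle 1\rangle}(\mathbf{y}^{*(r)})$ satisfies the four feasibility conditions listed there, since that proposition then guarantees existence of a $\delta$ for which $\delta\tilde{\mathbf{a}}$ is feasible in~\eqref{eq:bilevel-lower}. Two of those conditions, $\mathrm{Im}(\mathbf{M}_j(\tilde{\mathbf{a}})) \subseteq \mathrm{Im}(\mathbf{K}_j(\tilde{\mathbf{a}}))$ and $\mathbf{W}_j^\mathrm{T}[\hat{\mathbf{K}}_j^{\langle 1\rangle} - \underline{\lambda}_j \hat{\mathbf{M}}_j^{\langle 1\rangle}]\mathbf{W}_j \succeq 0$ for $j \in \mathcal{L}_\mathrm{fv}$, are precisely~\eqref{eq:rel_cond2} and~\eqref{eq:rel_cond1}, already established inside the proof of Proposition~\ref{th:ub}. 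Hence only nonnegativity of $\tilde{\mathbf{a}}$ and static admissibility $\mathbf{f}_j \in \mathrm{Im}(\mathbf{K}_j(\tilde{\mathbf{a}}))$, $j \in \mathcal{L}_\mathrm{s}$, remain.

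For nonnegativity, from~\eqref{eq:rel_a_1} the quantity $a_{\mathrm{rel},e}^{\langle 1\rangle}(\mathbf{y}^{*(r)})$ is a positive multiple of $y_{a_{\mathrm{s},e}^1}^{*(r)} + 1$, so I only need $y_{a_{\mathrm{s},e}^1}^{*(r)} \ge -1$. This follows by combining the localizing constraint for~\eqref{eq:opt_nonneg_po}, which gives $y_{a_{\mathrm{s},e}^2}^{*(r)} \le 1$, with positive semidefiniteness of the $2\times 2$ principal submatrix of the moment matrix indexed by $1$ and $a_{\mathrm{s},e}$, which gives $(y_{a_{\mathrm{s},e}^1}^{*(r)})^2 \le y_{a_{\mathrm{s},e}^2}^{*(r)}$; together these yield $|y_{a_{\mathrm{s},e}^1}^{*(r)}| \le 1$, hence $\tilde{a}_e \ge 0$.

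For static admissibility, I would repeat the opening argument of the proof of Proposition~\ref{th:ub}, now applied to the compliance constraint~\eqref{eq:opt_static_po}. Its localizing matrix always contains, as a principal submatrix, the linearization of the $2\times 2$ block in~\eqref{eq:opt_static_po}, which in the lifted variables takes the form analogous to~\eqref{eq:fvrelaxed}: the stiffness block becomes $\hat{\mathbf{K}}_j^{\langle 1\rangle}(\tilde{\mathbf{a}}) + \hat{\mathbf{K}}_j^{\langle 2\rangle}\left(\left[\mathbf{a}_{\mathrm{rel}}^{\langle 2\rangle}\right]^{\circ\frac{1}{2}}\right) + \hat{\mathbf{K}}_j^{\langle 3\rangle}\left(\left[\mathbf{a}_{\mathrm{rel}}^{\langle 3\rangle}\right]^{\circ\frac{1}{3}}\right)$. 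Applying Proposition~\ref{lemma:statical_admissibility} to this block shows $\mathbf{f}_j$ lies in the image of that lifted stiffness matrix, and the span identity~\eqref{eq:spans}---whose validity rests on Proposition~\ref{prop:inertia}---identifies this image with $\mathrm{Im}(\mathbf{K}_j(\tilde{\mathbf{a}}))$. Hence $\mathbf{f}_j \in \mathrm{Im}(\mathbf{K}_j(\tilde{\mathbf{a}}))$, all four conditions of Proposition~\ref{th:feasibility_cond} hold, and that proposition delivers the desired $\delta$.

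I do not expect a genuine obstacle: the proposition is essentially a consolidation of Propositions~\ref{prop:inertia}, \ref{th:ub} and~\ref{th:feasibility_cond} together with the static result transported from~\cite{Tyburec2023}. The one point requiring care is the span-preservation step---feasibility of the compliance localizing block only controls the lifted stiffness matrix, and Proposition~\ref{prop:inertia} is exactly what is needed to transfer that conclusion to $\mathbf{K}_j(\tilde{\mathbf{a}})$ itself; without it, feasibility of the moments would not obviously translate into feasibility of $\tilde{\mathbf{a}}$.
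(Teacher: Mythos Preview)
Your proposal is correct and follows essentially the same route as the paper: reduce to Proposition~\ref{th:feasibility_cond}, invoke Proposition~\ref{th:ub} for the free-vibration conditions, and for static admissibility use the compliance localizing block together with Proposition~\ref{lemma:statical_admissibility} and the span identity~\eqref{eq:spans} from Proposition~\ref{prop:inertia}. Your explicit verification of $\tilde{\mathbf{a}}\ge\mathbf{0}$ via the moment-matrix submatrix and the bound constraint is a welcome addition that the paper's proof leaves implicit.
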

\begin{proof}
	For free-vibration constraints, the proof follows from Proposition \ref{th:ub}. For static constraints, based on Proposition \ref{prop:inertia}, equation~\eqref{eq:spans} holds. Thus, since
	\begin{equation}
		\begin{pmatrix}
			\overline{c}_j & -\mathbf{f}_j^\mathrm{T}\\
			-\mathbf{f}_j & \hat{\mathbf{K}}_j^{\langle 1\rangle}(\tilde{\mathbf{a}}) + \hat{\mathbf{K}}_j^{\langle 2\rangle}\left(\left[\mathbf{a}_{\mathrm{rel}}^{\langle 2\rangle}\right]^{\circ\frac{1}{2}}\right) + \hat{\mathbf{K}}_j^{\langle 3\rangle}\left(\left[\mathbf{a}_{\mathrm{rel}}^{\langle 3\rangle}\right]^{\circ\frac{1}{3}}\right)
		\end{pmatrix} \succeq 0
	\end{equation}
	is satisfied by optimality of $\mathbf{y}^{*(r)}$, the condition $\mathbf{f}_j \in \mathbf{K}_j(\mathbf{\tilde{a}})$ holds true due to Proposition \ref{lemma:statical_admissibility}. Based on Proposition \ref{th:feasibility_cond}, the upper bound solution can thus be constructed.
	
	Consequently, there exists $\delta$ such that both types of constraints are satisfied simultaneously.
\end{proof}

\subsubsection{Convergence of the hierarchy}

When solving {arbitrary-degree} relaxation of \eqref{eq:opt_po}, we obtain a lower bound to the objective function value. Furthermore, using Proposition \ref{prop:feasibleub}, we reach a feasible upper-bound solution to \eqref{eq:bilevel}, and thus also to \eqref{eq:opt}. Note that we generally do not have an upper bound to the problem \eqref{eq:opt_po} due to possibly violated variable upper bounds and the weight constraint which were introduced to make the feasible set compact. Nevertheless, we can state a simple condition of global $\varepsilon$-optimality.

\begin{proposition}
	Let $\delta^* \tilde{\mathbf{a}}$ be a feasible {point} to \eqref{eq:opt} constructed based on Proposition \ref{prop:feasibleub}. Then,
	\begin{equation}
		(\delta^* - 1)\frac{\overline{w}}{2} \left(n_\mathrm{e} + \mathbf{1}^\mathrm{T} \mathbf{y}^{(r)}_{\mathbf{a}_{\mathrm{s}}^1}
        \right) \le \varepsilon
	\end{equation}
	is a sufficient condition of global $\varepsilon$-optimality of the objective function.
\end{proposition}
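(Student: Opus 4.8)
The plan is to recognize the left-hand side as exactly the gap between the weight of the constructed feasible point $\delta^{*}\tilde{\mathbf{a}}$ and the lower bound produced by the $r$-th relaxation, and then invoke the sandwich $f^{(r)}\le f^{*}\le$ (weight of $\delta^{*}\tilde{\mathbf{a}}$). So essentially nothing beyond bookkeeping is needed.

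\textbf{Step 1 (lower bound).} First I would recall that solving the $r$-th relaxation of \eqref{eq:opt_po} delivers
\[
  f^{(r)} \;=\; L_{\mathbf{y}^{*(r)}}\!\Bigl(\tfrac12\overline{w}(n_\mathrm{e}+\mathbf{1}^\mathrm{T}\mathbf{a}_\mathrm{s})\Bigr) \;=\; \tfrac{\overline{w}}{2}\bigl(n_\mathrm{e}+\mathbf{1}^\mathrm{T}\mathbf{y}^{(r)}_{\mathbf{a}_\mathrm{s}^1}\bigr),
\]
and that, by Theorem~\ref{th:convergence} (monotonicity of the hierarchy) together with the remark after \eqref{eq:opt_po} that \eqref{eq:opt_po} and \eqref{eq:opt} share the same optimal value $f^{*}$, we have $f^{(r)}\le f^{*}$.

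\textbf{Step 2 (weight at the relaxed point is $f^{(r)}$).} Next I would evaluate the weight at $\tilde{\mathbf{a}}=\mathbf{a}_{\mathrm{rel}}^{\langle 1\rangle}(\mathbf{y}^{*(r)})$. By the definition \eqref{eq:rel_a_1}, $\rho_e\ell_e\tilde a_e=\tfrac{\overline{w}}{2}\bigl(y^{*(r)}_{a_{\mathrm{s},e}^1}+1\bigr)$, so summing over $e$ gives $\sum_{e}\rho_e\ell_e\tilde a_e=\tfrac{\overline{w}}{2}\bigl(n_\mathrm{e}+\mathbf{1}^\mathrm{T}\mathbf{y}^{(r)}_{\mathbf{a}_\mathrm{s}^1}\bigr)=f^{(r)}$. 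The point is simply that, since the objective is linear, linearizing it over the moments and evaluating it at the first-order moments return the same number. Because the weight is positively homogeneous of degree one in $\mathbf{a}$, the weight of $\delta^{*}\tilde{\mathbf{a}}$ equals $\delta^{*}f^{(r)}$.

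\textbf{Step 3 (upper bound and gap).} By Proposition~\ref{prop:feasibleub} (through Proposition~\ref{th:feasibility_cond}) the design $\delta^{*}\tilde{\mathbf{a}}$ is feasible for \eqref{eq:bilevel-lower}, hence for \eqref{eq:opt}, so its weight is an upper bound, $f^{*}\le\delta^{*}f^{(r)}$. Combining with Step~1,
\[
  \sum_{e=1}^{n_\mathrm{e}}\rho_e\ell_e\,\delta^{*}\tilde a_e-f^{*}\;\le\;\delta^{*}f^{(r)}-f^{(r)}\;=\;(\delta^{*}-1)\,\tfrac{\overline{w}}{2}\bigl(n_\mathrm{e}+\mathbf{1}^\mathrm{T}\mathbf{y}^{(r)}_{\mathbf{a}_\mathrm{s}^1}\bigr),
\]
so whenever the right-hand side is at most $\varepsilon$, the feasible design $\delta^{*}\tilde{\mathbf{a}}$ is globally $\varepsilon$-optimal, which is the assertion.

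I do not expect a genuine obstacle here; the two points that need a word of care are (i) that $f^{(r)}$ is a lower bound for the \emph{original} problem \eqref{eq:opt} and not merely for the compactified \eqref{eq:opt_po} — this is exactly the fact that the two problems share optimizers and optimal value — and (ii) that $n_\mathrm{e}+\mathbf{1}^\mathrm{T}\mathbf{y}^{(r)}_{\mathbf{a}_\mathrm{s}^1}\ge 0$, equivalently $\tilde{\mathbf{a}}\ge\mathbf{0}$, which follows from moment-matrix positivity and the bound constraints \eqref{eq:opt_nonneg_po}; this also shows $\delta^{*}\ge 1$ whenever $f^{(r)}>0$, so the stated condition is a genuine (nonnegative) gap bound.
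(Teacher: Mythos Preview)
Your proof is correct and follows essentially the same approach as the paper, which simply says the result ``follows from expanding $(\delta^*-1)\sum_{e=1}^{n_\mathrm{e}}\rho_e\ell_e a_{\mathrm{rel},e}^{\langle 1\rangle}(\mathbf{y}^{(r)}_{\mathbf{a}_{\mathrm{s}}}) \le \varepsilon$.'' You have merely unpacked this one-line proof into its constituent steps---identifying $f^{(r)}$ with the weight at $\tilde{\mathbf{a}}$, invoking feasibility of $\delta^{*}\tilde{\mathbf{a}}$ for the upper bound, and sandwiching $f^{*}$---and your additional remarks about why $f^{(r)}$ bounds the original (not just the compactified) problem and about the nonnegativity of the gap are useful clarifications the paper leaves implicit.
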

\begin{proof}
	The proof follows from expanding $(\delta^*-1) \sum_{e=1}^{n_\mathrm{e}}\rho_e\ell_e a_{\mathrm{rel},e}^{\langle 1\rangle}
(\mathbf{y}^{(r)}_{\mathbf{a}_{\mathrm{s}}})
 \le 
 \varepsilon$.
\end{proof}

In general, this condition may not be tight for $r\rightarrow \infty$. For example, if there is more than one but a finite number of global minimizers, $\tilde{\mathbf{a}}$ will be a point in the convex hull of the global minimizers, which may not be feasible to the original problem, see \cite[Section 4.2]{Tyburec2021} for a graphical illustration. A similar situation occurs if there are infinitely many global minimizers in a nonconvex set.

On the other hand, if the set of global minimizers is convex, then we can prove convergence $\delta^* \rightarrow 1$ as $r \rightarrow \infty$.

\begin{theorem}\label{th:convergence_up}
	Let $\delta^* \tilde{\mathbf{a}}$ be a feasible {point} to \eqref{eq:opt} constructed based on Proposition \ref{prop:feasibleub}. If the set of global minimizers is convex, then
	\begin{equation}
		(\delta^* - 1)\frac{\overline{w}}{2} \left(n_\mathrm{e} + \mathbf{1}^\mathrm{T} \mathbf{y}^{(r)}_{\mathbf{a}_{\mathrm{s}}^1}\right) = 0
	\end{equation}
	as $r\rightarrow\infty$.
\end{theorem}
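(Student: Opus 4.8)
The plan is to read the quantity in the statement as the gap between the weight of the feasible upper-bound design constructed in Proposition~\ref{prop:feasibleub} and the relaxation lower bound, and then to close this gap by proving that the scaling factor $\delta^*$ tends to $1$. I would begin with two bookkeeping observations. By \eqref{eq:rel_a_1} and the form of the objective, the weight of $\tilde{\mathbf{a}}=\mathbf{a}^{\langle 1\rangle}_{\mathrm{rel}}(\mathbf{y}^{*(r)})$ equals exactly $\tfrac{\overline w}{2}\bigl(n_\mathrm{e}+\mathbf{1}^\mathrm{T}\mathbf{y}^{(r)}_{\mathbf{a}_\mathrm{s}^1}\bigr)$, which is the optimal value $f^{(r)}$ of the $r$-th relaxation of \eqref{eq:opt_po}. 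Since $\delta^*\tilde{\mathbf{a}}$ is feasible to \eqref{eq:opt} (Proposition~\ref{prop:feasibleub}), its weight $\delta^* f^{(r)}$ is at least the optimum $f^*$, while $f^{(r)}\le f^*$ because \eqref{eq:opt_po} satisfies Assumption~\ref{as:archimedean} and hence $f^{(r)}\nearrow f^*$ by Theorem~\ref{th:convergence}; consequently $\delta^*\ge 1$. Thus the quantity in the theorem is precisely $(\delta^*-1)f^{(r)}=\delta^* f^{(r)}-f^{(r)}\ge 0$, and (assuming $f^*>0$, the case $f^*=0$ being trivial) it converges to zero if and only if $\limsup_r \delta^* f^{(r)}\le f^*$, i.e. if and only if $\delta^*\to 1$.

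Next I would identify the accumulation points of $\tilde{\mathbf{a}}$ using compactness and the standard convergence machinery of the moment hierarchy. The scaled variables lie in the compact box $\{\mathbf{a}_\mathrm{s}:a_{\mathrm{s},e}^2\le 1\}$, so along any subsequence the truncated optimal moment vectors admit, by a diagonal extraction, a limit that is an Archimedean-feasible moment sequence and therefore has a representing probability measure $\mu$; since $L_{\mathbf{y}^{*(r)}}(f)=f^{(r)}\to f^*$, the measure $\mu$ is supported on the set $S^*$ of optimal cross-sections (equivalently, on the optimizers of \eqref{eq:opt_po}, which map bijectively onto those of \eqref{eq:opt}). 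Along that subsequence the first-order moments converge, so by \eqref{eq:rel_a_1} the corresponding $\tilde{\mathbf{a}}$ converges to the barycenter (in the cross-section variables) of $\mu$, which lies in $\overline{\mathrm{conv}}(S^*)$. This is where the hypothesis is used: if $S^*$ is convex it is also compact, hence $\overline{\mathrm{conv}}(S^*)=S^*$, so every accumulation point $\bar{\mathbf{a}}$ of $\tilde{\mathbf{a}}$ is itself a global minimizer, and therefore $\mathrm{dist}(\tilde{\mathbf{a}},S^*)\to 0$ as $r\to\infty$.

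It then remains to show $\delta^*\to 1$. For any $\hat{\mathbf{a}}\in S^*$ the design is feasible to \eqref{eq:opt}, so $\delta=1$ satisfies all constraints of the lower-level problem \eqref{eq:bilevel-lower} and hence the optimal scaling of $\hat{\mathbf{a}}$ is $\le 1$; moreover a scaling $<1$ would give a feasible point $\delta\hat{\mathbf{a}}$ with weight $\delta f^*<f^*$, contradicting optimality, so the optimal scaling of $\hat{\mathbf{a}}$ is exactly $1$. Combining this with $\mathrm{dist}(\tilde{\mathbf{a}},S^*)\to 0$, with the monotonicity of the compliance and of $\lambda_{\min,j}(\delta\tilde{\mathbf{a}})$ in $\delta$ (Proposition~\ref{prop:monotonic} and \cite[Proposition~1]{Tyburec2023}), and with the structural properties carried by the relaxation iterates, namely $\mathbf{f}_j\in\mathrm{Im}\,\mathbf{K}_j(\tilde{\mathbf{a}})$ and $\mathrm{Im}\,\mathbf{M}_j(\tilde{\mathbf{a}})\subseteq\mathrm{Im}\,\mathbf{K}_j(\tilde{\mathbf{a}})$ (Propositions~\ref{prop:inertia}, \ref{th:ub}, \ref{prop:feasibleub}), one concludes that for every $\eta>0$ the scaled design $(1+\eta)\tilde{\mathbf{a}}$ is feasible for all large $r$, hence $\limsup_r\delta^*\le 1+\eta$; letting $\eta\downarrow 0$ gives $\delta^*\to 1$, and therefore $(\delta^*-1)f^{(r)}\to 0$.

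The main obstacle is precisely this last step. The feasible set of \eqref{eq:opt} is generally disconnected, and the scaling map $\tilde{\mathbf{a}}\mapsto\delta^*(\tilde{\mathbf{a}})$ need not be continuous where the rank of $\mathbf{K}_j$ or $\mathbf{M}_j$ drops, so "$\tilde{\mathbf{a}}$ close to a global minimizer" does not by itself force "$\delta^*$ close to $1$": a small perturbation of the areas can flatten $\lambda_{\min,j}$ below $\underline{\lambda}_j$ or push $\mathbf{f}_j$ out of $\mathrm{Im}\,\mathbf{K}_j$, which would require a large rescaling. The argument must therefore exploit that the iterates $\tilde{\mathbf{a}}$ generated by the relaxations keep the image inclusions above (Propositions~\ref{prop:inertia}, \ref{th:ub}, \ref{prop:feasibleub}), which fixes the relevant minimal faces of the positive semidefinite cones along the sequence and excludes the rank-drop pathologies, so that a relative Slater-type argument applies at the limiting minimizer and the perturbed feasibility (for the scaled design) is inner semicontinuous there; making this uniformity precise is the part requiring care.
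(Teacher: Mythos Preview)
Your high-level strategy coincides with the paper's: use the convexity hypothesis to argue that the limiting first-order moment vector lies in the set $S^*$ of global minimizers itself (not merely in its convex hull), so that $\tilde{\mathbf{a}}$ is feasible and $\delta^*=1$. The technical packaging differs. You pass through representing measures: extract a subsequence, obtain a probability measure supported on $S^*$, take its barycenter, and invoke convexity to place the barycenter back in $S^*$. The paper's proof is a short paragraph that instead cites \cite[Proposition~7]{Tyburec2021} for the fact that, in the limit, the hierarchy optimizes the linear objective over $\mathrm{Conv}(\mathcal{K})$, and then observes that the optimal face of $\mathrm{Conv}(\mathcal{K})$ is $\mathrm{Conv}(S^*)$, which equals $S^*$ by assumption. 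Your route is more self-contained; the paper's is terser and leans entirely on the cited reference.

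The obstacle you isolate in your final paragraph --- that $\tilde{\mathbf{a}}\mapsto\delta^*(\tilde{\mathbf{a}})$ need not be continuous across rank drops of $\mathbf{K}_j$ or $\mathbf{M}_j$, so ``$\tilde{\mathbf{a}}$ close to $S^*$'' does not by itself force ``$\delta^*$ close to $1$'' --- is a real subtlety, and the paper's proof does not address it either. The paper's argument effectively reasons at $r=\infty$, where the first-order moments \emph{are} in $S^*$ and hence $\delta^*=1$ exactly, rather than proving convergence of $\delta^*$ along the finite-$r$ sequence. So on this specific point you are more careful than the paper; your suggested remedy (exploiting that the relaxation iterates preserve the image inclusions via Propositions~\ref{prop:inertia}, \ref{th:ub}, \ref{prop:feasibleub}, thereby fixing the relevant faces) is the right direction, but neither your outline nor the paper's proof makes this step fully rigorous.
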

\begin{proof}
	Because of Theorem \ref{th:convergence} and satisfied Assumption \ref{as:archimedean}, optimization over the feasible set $\mathcal{K}$ is equivalent to optimization over $\mathrm{Conv}\left(\mathcal{K}\right)$ \cite[Proposition 7]{Tyburec2021}. Thus, since $\mathrm{Conv}\left(\mathcal{K}\right)$ is compact, it can be expressed as a convex hull of its limit points $\mathbf{d}_1, \mathbf{d}_2,\dots$ as
	\begin{equation}
		\mathrm{Conv}\left(\mathcal{K}\right) = \mathrm{Conv}\left(\cup_{i=1}^{\infty}\mathbf{d}_i\right).
	\end{equation}
	Because we have assumed convexity of the set of global minimizers, there exists a convex set $\mathrm{Conv}\left(\cup_{i=1}^{\infty} \mathbf{d}_i^*\right) \subseteq \mathrm{Conv}\left(\mathcal{K}\right)$ with the points $\mathbf{d}_i^*$ belonging to the minimum.
\end{proof}

Comparing Theorem \ref{th:convergence_up} with the rank flatness condition of Curto and Fialkow \cite{Curto1996} (recall \eqref{eq:flatextension}), we observe that the condition in Theorem \ref{th:convergence_up} is numerically simpler to check, provides information about the quality of relaxations, and also allows to strengthen subsequent relaxations by compactifications based on the feasible upper bounds. However, the condition in Theorem \ref{th:convergence_up} remains restricted to the problems we have investigated in this manuscript. 

In terms of applicability to optimizing the weight of frame structures, both conditions complement each other. In particular, they certify finite convergence for optimization problems with a unique global minimizer{, which typically happens unless the problem exhibits structural symmetries}, but the rank-flatness condition also holds for finitely many minimizers. The latter setting does not apply to the condition in Theorem \ref{th:convergence_up}, which requires the set of global minimizers to be convex. However, the condition in Theorem \ref{th:convergence_up} also recognizes convergence for infinitely many minimizers in a convex set{, for which the rank does not stabilize}. Nevertheless, both conditions fail to recognize optimality if there are infinitely many minimizers in a nonconvex domain.

\subsubsection{Implementation remarks} 
In the previous sections, we have developed theoretical foundations for optimizing frame structures under free-vibration eigenvalue and compliance constraints. {For reader's convenience, we summarize the algorithm developed in this work in Fig.~\ref{fig:optflow}: We start by finding a feasible point $\delta^*\tilde{\mathbf{a}}$ to \eqref{eq:opt} by first getting $\tilde{\mathbf{a}}$ that solves \eqref{eq:linprob}, and then optimizing $\delta^*$ in \eqref{eq:bilevel-lower} using bisection. Then, we take the lowest relaxation order $r = \left\lceil \mathrm{deg}(\mathbf{K}(\mathbf{a}))/2 \right\rceil$ and solve the $r$-th degree relaxation of \eqref{eq:opt_po}, which is a compactified version on \eqref{eq:opt}. By construction of the compactification, \eqref{eq:opt} and \eqref{eq:opt_po} have the same set of global solutions, so we obtain a lower bound $f^{(r)}$ for \eqref{eq:opt}. Using the relaxation solution $\mathbf{y}^{*(r)}$, we set $\tilde{\mathbf{a}} = \mathbf{a}_{\mathrm{rel}}^{\langle 1\rangle}(\mathbf{y}^{*(r)})$ and compute an upper bound $\overline{w}$ to the objective function value of \eqref{eq:opt} by solving \eqref{eq:bilevel-lower} using bisection. If the difference between the upper and lower bound is not small enough, we increase the relaxation order $r$ and repeat the process. Finally, we return the globally $(\overline{w}-f^{(r)})$-optimal solution to \eqref{eq:opt}.}
\begin{figure}[!htbp]
    \centering
    \begin{tikzpicture}[node distance=0.5cm and 1.25cm]
      \tikzstyle{block} = [draw, rectangle, rounded corners=5pt, text width=4.5cm, align=center, minimum height=1cm]
    \tikzstyle{decision} = [draw, shape=diamond, text width=1.95cm, align=center, inner sep=0pt, minimum height=1cm]
      \tikzstyle{line} = [draw, -latex, thick]
  
    \node[block] (start) {Find feasible point to \eqref{eq:opt}: solve \eqref{eq:linprob} for $\tilde{\mathbf{a}}$, then optimize $\delta^*$ in \eqref{eq:bilevel-lower} via bisection. Set objective upper bound $\overline{w} = \delta^* \sum_{e=1}^{n_\mathrm{e}} \rho_e \ell_e \tilde{a}_e$.};
      \node[block, below=of start] (rinit) {Set $r = \left\lceil \mathrm{deg}(\mathbf{K}(\mathbf{a}))/2 \right\rceil$.};
      \node[block, below=of rinit] (relaxation) {Solve $r$-th degree relaxation of \eqref{eq:opt_po} (built according to \eqref{eq:general_moment}) to get a lower bound $f^{(r)}$ for \eqref{eq:opt} and the relaxation solution $\mathbf{y}^{*(r)}$.};
      \node[block, below=of relaxation] (ub) {Let $\tilde{\mathbf{a}} := \mathbf{a}_{\mathrm{rel}}^{\langle 1\rangle}(\mathbf{y}^{*(r)})$. Compute upper bound $\overline{w}$ to \eqref{eq:opt} by optimizing $\delta^*$ in \eqref{eq:bilevel-lower} using bisection.};
      \node[decision, below=of ub] (check) {$\overline{w}-f^{(r)}\approx 0$};
      \node[block, below=of check] (end) {Return globally $(\overline{w}-f^{(r)})$-optimal solution to \eqref{eq:opt}.};
      \node[block, right=of check] (increase) {Increase relaxation order $r$};
  
      \path[line] (start) -- (rinit);
      \path[line] (rinit) -- (relaxation);
      \path[line] (relaxation) -- (ub);
      \path[line] (ub) -- (check);
      \path[line] (check.south) -- ++(0,-0.25) node[midway,right]{yes} -- (end.north);
      \path[line] (check.east) -- ++(1cm,0) node[midway,above]{no} -- (increase.west);
      \path[line] (increase.north) |- (relaxation.east);
      \end{tikzpicture}
    \caption{{Flowchart of weight optimization under free-vibration eigenvalue and static compliance constraints. The process iteratively refines bounds on the global minimum of the objective function until convergence.}}
    \label{fig:optflow}
\end{figure}

Although these results are theoretically sufficient, the solution to the relaxations and construction of feasible upper bounds requires resolving a few details related to the numerical treatment. These are described in this subsection.

\paragraph{Accurate finite element model} In contrast to static problems, where it is sufficient to model each design element using a single finite element, this discretization is insufficient to accurately evaluate the free-vibration response. In particular, the value of the lowest free-vibration eigenvalue converges with the mesh refinement from above, which implies that optimized structures will always have lower fundamental eigenvalues than the prescribed bound $\underline{\lambda}$. Fortunately, a sufficiently accurate response already follows from the use of two finite elements per design element. For this reason, we adopt this setting in the next section.

\paragraph{Detecting zero cross-section areas} The {feasibility} conditions of Proposition \ref{th:feasibility_cond} needed for the construction of feasible upper bounds are based on the range space of $\mathbf{K}_j(\tilde{\mathbf{a}})$ and $\mathbf{M}_j(\tilde{\mathbf{a}})$. The range space is influenced by the values of $\tilde{\mathbf{a}}$, with possible rank drops in the case where $\tilde{a}_e = 0$ for some $e \in \{1,\dots,n_\mathrm{e}\}$. Therefore, it is crucial to decide which entries in $\tilde{\mathbf{a}}$ are positive and which are \textit{exactly} zero. However, in practical computations, we also have entries $\tilde{a}_e$ that are small but positive numbers instead of exact zeros. Clearly, this prevents a conclusive decision. To avoid this ambiguity, we have implemented an iterative scheme in which we first sort the values in $\tilde{\mathbf{a}}$ and successively investigate the influence of increasing (small) threshold values on the free vibration eigenvalue, satisfaction of the {feasibility} conditions in Proposition \ref{th:feasibility_cond}, and on the scaling factor $\delta$. In the case of constructing feasible upper bounds by solving \eqref{eq:bilevel-lower}, this means that we actually investigate a few different values of $\tilde{\mathbf{a}}$, which differ in the number of zeros, and return the lowest value of the scaling factor $\delta$ together with the associated vector~$\tilde{\mathbf{a}}$.

\paragraph{Strengthening the relaxations} The third point related to the numerical treatment deals with making the feasible space of the relaxations as tight as possible. In this direction, it helps to include constraints that are redundant {when directly solving} \eqref{eq:opt} {(using local optimization techniques for example)}, but at the same time tighten the set of moments in the relaxations. For example, $-1\le y_{a_{\mathrm{s},e}^1} \le 1$ is a weaker constraint than $y_{a_{\mathrm{s},e}^2} \le 1$ \cite[Appendix 2]{Tyburec2021}. Here, we exploit two techniques. First, if after solving a relaxation we reach a better feasible upper bound (up to a certain tolerance), we recompute the same relaxation with updated bounds for the variables, leading to better lower bounds. In some cases, this also decreases the number of relaxation degrees for convergence. Second, we impose a weight constraint based on the best-known upper-bound objective function value. Again, this strengthens the feasible set of relaxations and improves the lower bounds.

\paragraph{Possibility of using reduced polynomial basis} While the theoretical convergence results in this section rely on convergence of the hierarchy {using the canonical basis} as described in Section \ref{sec:background}, we observed that numerical convergence also occurs for a modified version of the hierarchy {after} removing all mixed terms from the basis, providing the so-called nonmixed term (NMT) basis \cite{handa2024tssos}
\begin{equation}
    \mathbf{b}_{\mathrm{NMT},r}(\mathbf{x}) = \begin{pmatrix}
        1 & x_1 & \dots & x_n & x_1^2 & \dots & x_n^2 & \dots x_n^r
    \end{pmatrix}.
\end{equation}
{Although we do not have guaranteed theoretical convergence, we have not encountered a problem for which the canonical basis converges and the NMT basis does not. Thus, we demonstrate numerical} advantages of the NMT basis {by} examples in Sections \ref{sec:ex20}--\ref{sec:ex52} below.

\section{Numerical examples}

In this section, we illustrate our theoretical results using {four} numerical examples. The first example is a small academic problem that provides insight into the challenges associated with the optimization of frame structures under the fundamental free-vibration eigenvalue constraints on one hand and, on the other hand, allows us to visualize the developed optimization technique. For this problem, we also reveal the effect of {feasible set tightening using} the weight constraint. The second example illustrates the applicability of the method to a global solution to slightly larger problems and compares the standard moment-sum-of-squares hierarchy with a hierarchy depending on the nonmixed term basis. {The third problem illustrates that the method handles multiple loading scenarios with varying kinematic boundary conditions.} In the final example, we solve a larger problem using the nonmixed basis alone.

The optimization problems were solved using a desktop computer, fitted with an Intel Xeon CPU E5-2630 v3 and $128$~GB of RAM. The associated code is implemented in MATLAB and available at \url{https://gitlab.com/tyburec/pof-dyna}. The optimization problems {(linear semidefinite programs)} were modeled using Yalmip \cite{Lofberg2004} and solved by the MOSEK optimizer \cite{mosek}. {The stopping criterion for the convergence of the hierarchy was the relative difference of the lower and upper bounds $10^{-5}$, which is consistent with the expected accuracy of the opimizer.}

\subsection{Illustrative problem}\label{sec:illustrative}

We start with an illustrative problem introduced in \cite{Ni2014} to highlight the challenges in the design of frame structures under the fundamental free-vibration eigenvalue constraints. The problem was also investigated in \cite{Yamada2015}.

Let us consider a frame with three elements and the finite element discretization depicted in Figure~\ref{fig:illustration2}, with the upper nodes of the structure clamped and a nonstructural mass of weight $0.5$~kg placed at the bottom. All finite elements share a circular cross-section shape, with the values reported in the cm$^2$ units. In particular, elements \squared{2} and \squared{5} have cross-sectional area $a_1$, while the remaining elements \squared{1}, $\squared{3}$, $\squared{4}$ and $\squared{6}$ share cross-sectional area $a_2$. The structure is made of a linear elastic material of Young modulus $E=210$~GPa and density $\rho=7,800$~kg/m$^3$.

\begin{figure}[!htbp]
\centering
\begin{subfigure}{0.45\linewidth}
    \begin{tikzpicture}
        \scaling{2}
        \point{a}{-1.000000}{1.000000};
        \point{b}{0.000000}{1.000000};
        \point{c}{1.000000}{1.000000};
        \point{d}{-0.500000}{0.500000};
        \point{e}{0.000000}{0.500000};
        \point{f}{0.500000}{0.500000};
        \point{g}{0.000000}{0.000000};
        \beam{2}{a}{d}
        \notation{4}{a}{d}[$1$]
        \beam{2}{b}{e}
        \notation{4}{b}{e}[$2$]
        \beam{2}{c}{f}
        \notation{4}{c}{f}[$3$]
        \beam{2}{d}{g}
        \notation{4}{d}{g}[$4$]
        \beam{2}{e}{g}
        \notation{4}{e}{g}[$5$]
        \beam{2}{f}{g}
        \notation{4}{f}{g}[$6$][0.25]
        \node[circle, fill,minimum size=10pt,inner sep=0pt, outer sep=0pt] at (g) {};
        \support{3}{a}[180];
        \support{3}{b}[180];
        \support{3}{c}[180];

        \notation{3}{a}{g}[][0.5][];
        \notation{3}{b}{g}[][0.5][];
        \notation{3}{c}{g}[][0.5][];

        \dimensioning{1}{a}{d}{-0.90}[$0.5$~m];
        \dimensioning{1}{d}{g}{-0.90}[$0.5$~m];
        \dimensioning{1}{g}{f}{-0.90}[$0.5$~m];
        \dimensioning{1}{f}{c}{-0.90}[$0.5$~m];
        \dimensioning{2}{a}{d}{2.5}[$0.5$~m];
        \dimensioning{2}{d}{g}{2.5}[$0.5$~m];

        \notation{1}{g}{$M=0.5$ kg}[below right];
    \end{tikzpicture}
    \vspace{3mm}
    \caption{}
    \label{fig:illustration2}
\end{subfigure}\\
\begin{subfigure}{1\linewidth}
    \includegraphics[width=\linewidth]{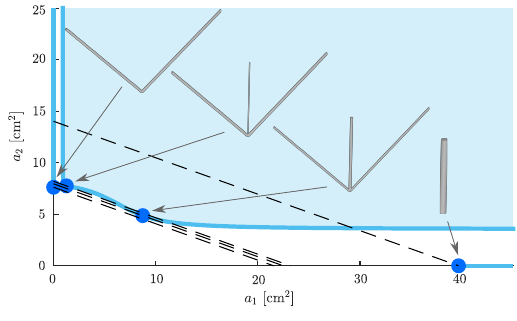}
    \caption{}
    \label{fig:fset}
\end{subfigure}
\caption{Illustrative problem: (a) discretization and boundary conditions, and (b) disconnected feasible set with four locally minimal solutions indicated with blue circles. The dashed lines denote the contour lines of the weight objective function {$16.80$, $17.54$, $18.08$ and $30.92$~kg}.}
\end{figure}

Following \cite{Yamada2015}, we consider a single load case problem with $n_{\mathrm{dof}} = 12$ degrees of freedom, which is the size of the polynomial matrix in the free-vibrations inequality \eqref{eq:opt_fv_po}, and set $\underline{\lambda}=10000\pi^2$~rad$^2$/s$^2$, which corresponds to the lowest resonance frequency being greater than or equal to $50$~Hz. For this setting, the feasible set is disconnected---see Figure~\ref{fig:fset}---which is due to the local vibration modes of the individual beams. This singularity phenomenon is inherent to the topology optimization problems of frame structures with free-vibration eigenvalue constraints and makes the solution particularly challenging. On the other hand, when solving a sizing problem with $\mathbf{a}>\mathbf{0}$, the singularity issue disappears, but the problem remains nonconvex nonetheless. This is also nicely visible in the considered illustrative problem. 
Its admissible domain consists of three disconnected sets and there are at least four locally minimal points: $\mathbf{a}_1=(39.64, 0.0)$ of the weight $30.92$~kg, $\mathbf{a}_2=(8.73, 4.87)$ of the weight $17.54$~kg, $\mathbf{a}_3=(1.26, 7.75)$ of the weight $18.08$~kg, and $\mathbf{a}_4=(0.0, 7.62)$ of the weight $16.80$~kg that is the global solution.

Using Lemma \ref{lemma:initial_ub} with $\underline{\lambda}$ defined above, we obtain the point $(2.2, 2.0)\times 10^{-3}$ for which there must exist a $\delta$ such that $\tilde{\mathbf{a}}=\delta (2.2, 2.0)\times 10^{-3}$ is a feasible point for the original problem \eqref{eq:opt_orig}. Using bisection, we find that the smallest such $\delta^*$ is $2920$, which provides a feasible solution $\mathbf{a}_{\rm f} = (6.41, 5.94)$ of the weight $\overline{w}=18.11$~kg.

\subsubsection{Solution without weight constraint}

Subsequently, using the upper bound $\overline{w}$ computed above, we make the feasible set compact and scale the design variables following the developments in Section \ref{sec:msos}. Then we solve the Lasserre hierarchy of convex relaxations for the formulation \eqref{eq:opt_po} but without the {tightening the} weight constraint \eqref{eq:opt_weight_po}; see the numerical results in Table \ref{tab:illustrative}.\ %
Notice that the value of the bound already cuts off one of the singular local solutions.

\begin{figure}[!htbp]
    \centering
    \begin{subfigure}{0.5\linewidth}
        \raggedright
        \includegraphics[width=0.99\linewidth]{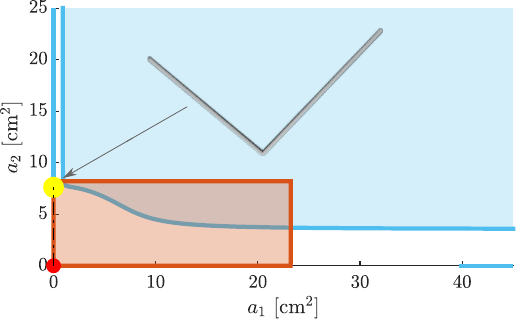}
        \caption{}
        \label{fig:rel1}
    \end{subfigure}%
    \begin{subfigure}{0.5\linewidth}
        \raggedleft
        \includegraphics[width=0.99\linewidth]{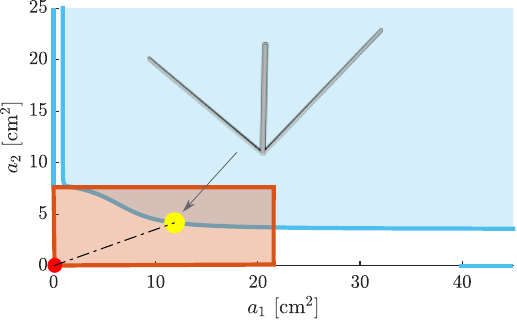}
        \caption{}
        \label{fig:rel2}
    \end{subfigure}\\
    \begin{subfigure}{0.5\linewidth}
        \raggedright
        \includegraphics[width=0.99\linewidth]{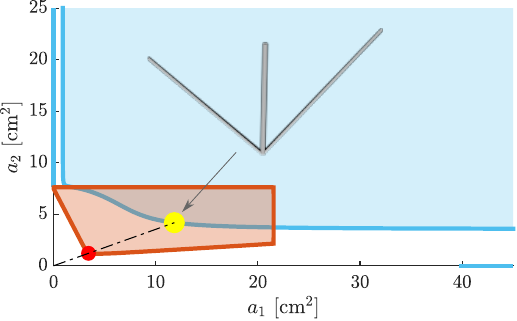}
        \caption{}
        \label{fig:rel3}
    \end{subfigure}%
    \begin{subfigure}{0.5\linewidth}
        \raggedleft
        \includegraphics[width=0.99\linewidth]{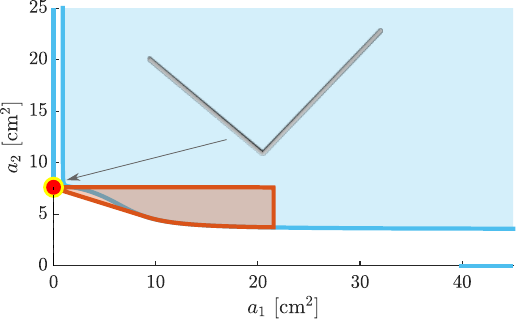}
        \caption{}
        \label{fig:rel4}
    \end{subfigure}
    \caption{Illustrative problem without the weight constraint: feasible sets of the (a) first, (b) second, (c) third, and (d) fourth relaxations shown in red, and the feasible set of the original optimization problem drawn in blue. Yellow circles denote upper bounds computed by projecting relaxation solutions (red circles) in the direction of the dashed dotted lines based on Proposition \ref{prop:feasibleub}.}
\end{figure}

In the lowest, first, relaxation, the feasible set of first-order moments almost matches the prescribed variable bounds; Fig.~\ref{fig:rel1}, which we obtained by finding an interior point within the feasible set of the first-order moments and optimizing the most distant feasible points in sampled directions from the interior point. However, the optimal solution to the first relaxation of \eqref{eq:opt_po}, provides a strictly positive lower bound objective function value $7.6\times 10^{-3}$~kg. The strict positivity of this lower bound follows from Proposition~\ref{th:ub}, as there is a non-structural mass $M$ present in this problem\footnote{{For $M=0$, the trivial optimum of $\mathbf{a}$ would be found and certified as globally optimal for $r=1$.}}. The associated upper bound $(0, 7.62)$ of the weight $16.80$~kg constructed based on Proposition \ref{prop:feasibleub} is, in fact, the global solution (currently not certified). However, because the upper bound has a weight lower than that of the original feasible point, we further adopt this new bound in the compactification and scaling of subsequent relaxations.

The second relaxation exhibits a similar feasible set of first-order moments, Fig.~\ref{fig:rel2}, yet provides a slightly improved lower bound $0.18$~kg, and the associated feasible upper bound design $(11.85, 4.18)$ has the weight of $18.46$~kg. In the third relaxation, we achieve a considerably improved lower bound of $5.33$~kg and a tighter feasible set of the first-order moments, Fig.~\ref{fig:rel3}, but an upper bound design very similar to the one obtained in the second relaxation: $(11.82, 4.18)$ of the weight $18.45$~kg. 

In the final, fourth relaxation, we are able to certify the global optimality of the design. The lower bound is evaluated as $16.80$~kg and the associated cross-section areas $(0, 7.62)$ are feasible for the original optimization problem (i.e., $\delta^*\approx1$ in \eqref{eq:bilevel-lower}). In fact, we have the absolute optimality gap $\varepsilon=-10^{-6}${. This negative value corresponds to the tolerance of the MOSEK optimizer and occurs only due to inexact numerical solution of the relaxation}. The feasible set of the final relaxation illustrated in Fig.~\ref{fig:rel4} shows that it corresponds to the convex hull of the original feasible set within the introduced variable bounds. This suggests that the quality of the relaxation depends on the value of the supplied bounds, which is consistent with what we observed in the computations.

\begin{table}[!htbp]
    \centering
    \begin{tabular}{rrrrrrr}
         $r$ & l.b.\ [kg] & u.b.\ [kg] & $\varepsilon$ [kg] & $\lvert \mathbf{y}\rvert$ & $n_\mathrm{c} \times m$  & $t$ [s]\\ \hline
         1 & $0.01$ & $16.80$ & $16.80$ & $5$ & $2\times1$, $1\times3$, $1\times12$ & $0.25$\\
         2 & $0.18$ & $18.46$ & $16.62$ & $14$ & $2\times3$, $1\times6$, $1\times36$ & $0.31$\\
         3 & $5.33$ & $18.45$ & $11.47$ & $27$ & $2\times6$, $1\times10$, $1\times72$ & $1.58$\\
         4 & $16.80$ & $16.80$ & $-10^{-6}$ & $44$ & $2\times10$, $1\times15$, $1\times120$ & $0.69$
    \end{tabular}
    \caption{Numerical results of the illustrative problem without the weight constraint \eqref{eq:opt_weight_po}: $r$ denotes the relaxation degree {(see Section \ref{sec:background})}, l.b.\ and u.b.\ abbreviate the lower and upper bounds, and $\varepsilon$ denotes the absolute gap between the best u.b.\ and current l.b. Furthermore, $\lvert\mathbf{y}\rvert$ represents the number of moment variables, $n_\mathrm{c}\times m$ indicates that there are $n_\mathrm{c}$ constraints of size $m\times m$, and $t$ provides the computational time.}
    \label{tab:illustrative}
\end{table}

\subsubsection{Solution with weight constraint} 

Next, we solve the same problem again, but now with an additional weight constraint \eqref{eq:opt_weight_po} with the bound value provided based on the best current upper bound. In the lowest relaxation of the moment-sum-of-squares hierarchy \eqref{eq:opt_po}, we reach the same lower-bound weight of $7.6 \times 10^{-3}$~kg as in the case without the weight constraint. Consequently, we also have the same feasible upper bound design $(0, 7.62)$. The feasible set of first-order moments is again similar to the previous case, with the difference having no effect on the relaxation solution; see Fig.~\ref{fig:rel1w}.

However, when solving the second relaxation, we already have an improved lower bound of $0.42$~kg, which is an improvement when compared to $0.18$~kg obtained without the weight constraint. The associated upper bound $(15.21, 3.91)$ has the weight of $20.48$~kg. The feasible set in Fig.~\ref{fig:rel2w} is now clearly tighter. Finally, we solve the third relaxation and reach the lower bound of $16.80$~kg, which is a significant increase when compared to $5.33$~kg reached without the weight constraint. The lower bound is now almost equal to the weight $16.80$~kg of the globally-optimal upper-bound design $(0, 7.62)$. The lower bounds do not increase with higher relaxations, suggesting that the third relaxation is optimal up to the accuracy of the solver. The feasible set in Fig.~\ref{fig:rel3w} contains a single point.

\begin{figure}[!htbp]
    \centering
    \begin{subfigure}{0.5\linewidth}
        \raggedright
        \includegraphics[width=0.99\linewidth]{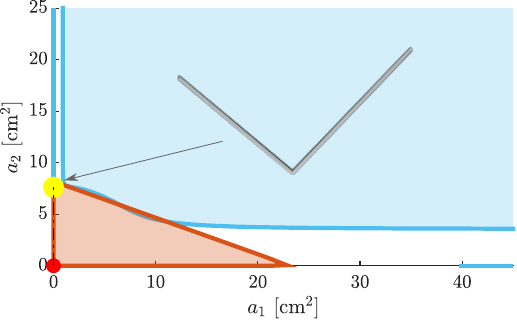}
        \caption{}
        \label{fig:rel1w}
    \end{subfigure}%
    \begin{subfigure}{0.5\linewidth}
        \raggedleft
        \includegraphics[width=0.99\linewidth]{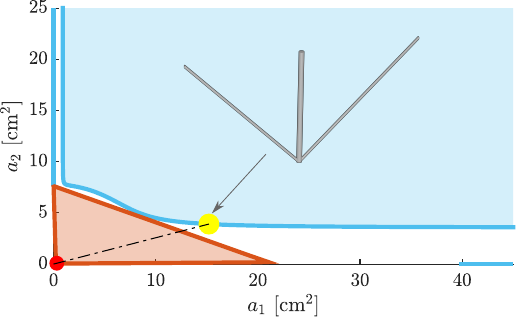}
        \caption{}
        \label{fig:rel2w}
    \end{subfigure}\\
    \centering\begin{subfigure}{0.5\linewidth}
        \raggedright
        \includegraphics[width=0.99\linewidth]{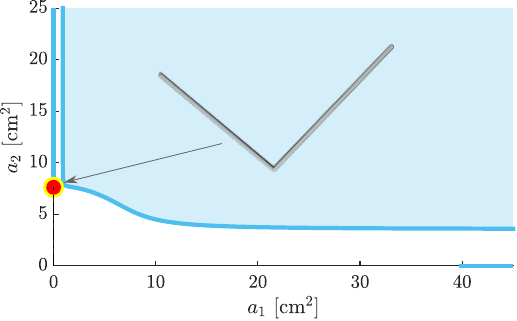}
        \caption{}
        \label{fig:rel3w}
    \end{subfigure}
    \caption{Illustrative problem with the weight constraint: feasible sets of the (a) first, (b) second, and (c) third relaxations shown in red, and the feasible set of the original optimization problem drawn in blue. Yellow circles denote upper bounds computed by projecting relaxation solutions (red circles) in the direction of the dashed dotted lines based on Proposition \ref{prop:feasibleub}.}
\end{figure}

\begin{table}[!htbp]
    \centering
    \begin{tabular}{rrrrrrr}
         $r$ & l.b.\ [kg] & u.b.\ [kg] & $\varepsilon$ [kg] & $\lvert \mathbf{y}\rvert$ & $n_\mathrm{c} \times m$  & $t$ [s]\\ \hline
         1 & $0.01$ & $16.80$ & $16.80$ & $5$ & $3\times1$, $1\times3$, $1\times12$ & $0.15$\\
         2 & $0.42$ & $20.48$ & $16.38$ & $14$ & $3\times3$, $1\times6$, $1\times36$ & $0.18$\\
         3 & $16.80$ & $16.80$ & $0.01$ & $27$ & $3\times6$, $1\times10$, $1\times72$ & $0.42$
    \end{tabular}
    \caption{Numerical results of the illustrative problem with the weight constraint \eqref{eq:opt_weight_po}: $r$ denotes the relaxation degree, l.b.\ and u.b.\ abbreviate the lower and upper bounds, and $\varepsilon$ denotes the absolute gap between the best u.b.\ and current l.b. Furthermore, $\lvert\mathbf{y}\rvert$ represents the number of moment variables, $n_\mathrm{c}\times m$ indicates that there are $n_\mathrm{c}$ constraints of size $m\times m$, and $t$ provides the computational time.}
    \label{tab:illustrativew}
\end{table}

\subsection{20-element cantilever beam}\label{sec:ex20}

As the second problem, we consider the structure of $10$ segments, each discretized with $2$ elements, as shown in Fig.~\ref{fig:20}. For this discretization, we have $n_\mathrm{dof}=42$ degrees of freedom, so $\mathbf{K}(\mathbf{a}), \mathbf{M}(\mathbf{a}) \in \mathbb{S}^{42}_{\succeq 0}$. The structure is made of a linear-elastic material with the Young modulus $E=68.9$~GPa and the density $\rho=2,770$~kg/m$^3$. Similarly to the previous example, we adopt circular cross sections and restrict elements in each segment to share the values of cross-sectional areas: $a_1=a_2$, $a_3=a_4$, $a_5=a_6$, $a_7=a_8$, $a_9=a_{10}$, $a_{11}=a_{12}$, $a_{13}=a_{14}$, $a_{15}=a_{16}$, $a_{17}=a_{18}$ and $a_{19}=a_{20}$. Consequently, we have ten optimized cross sections.

The left nodes of the structure are clamped and an upward load of size $10$~kN is acting at the bottom right corner of the structure. In addition, we incorporate a nonstructural mass $M=100$~kg in the center of the bottom edge. Under a single loading scenario, we enforce both the compliance and the fundamental free-vibrations eigenvalue constraints. In particular, we set $\overline{c} = 1$, which restricts the vertical displacement at the point and direction of the load to be at most $0.1$~mm, and $\underline{\lambda}=40000\pi^2$ rad$^2$/s$^2$, which requires the lowest resonance frequency to be at least $100$~Hz. 

\begin{figure}[!htbp]
\begin{subfigure}{0.5\linewidth}
\begin{tikzpicture}
    \scaling{2.5}
    \point{a}{0.000000}{0.000000}
    \point{b}{1.000000}{0.000000}
    \point{c}{2.000000}{0.000000}
    \point{d}{0.000000}{1.000000}
    \point{e}{1.000000}{1.000000}
    \point{f}{2.000000}{1.000000}
    \point{g}{0.500000}{0.000000}
    \point{h}{1.500000}{0.000000}
    \point{i1}{0.45}{0.45}
    \point{i2}{0.55}{0.55};
    \point{j}{0.500000}{0.500000}
    \point{k}{1.000000}{0.500000}
    \point{l1}{1.45}{0.45}
    \point{l2}{1.55}{0.55}
    \point{m}{1.500000}{0.500000}
    \point{n}{2.000000}{0.500000}
    \point{o}{0.500000}{1.000000}
    \point{p}{1.500000}{1.000000}
    \beam{2}{d}{o}
    \notation{4}{d}{o}[$1$]
    \beam{2}{e}{o}
    \notation{4}{e}{o}[$2$]
    \beam{2}{d}{j}
    \notation{4}{d}{j}[$3$]
    \beam{2}{b}{j}
    \notation{4}{b}{j}[$4$][0.65]
    \beam{2}{a}{i1}
    \notation{4}{a}{i1}[$5$]
    \beam{2}{e}{i2}
    \notation{4}{e}{i2}[$6$]
    \beam{2}{a}{g}
    \notation{4}{a}{g}[$7$]
    \beam{2}{b}{g}
    \notation{4}{b}{g}[$8$]
    \beam{2}{b}{k}
    \notation{4}{b}{k}[$9$]
    \beam{2}{e}{k}
    \notation{4}{k}{e}[$10$]
    \beam{2}{e}{p}
    \notation{4}{e}{p}[$11$]
    \beam{2}{f}{p}
    \notation{4}{f}{p}[$12$]
    \beam{2}{e}{m}
    \notation{4}{e}{m}[$13$]
    \beam{2}{c}{m}
    \notation{4}{c}{m}[$14$][0.65]
    \beam{2}{b}{l1}
    \notation{4}{b}{l1}[$15$]
    \beam{2}{f}{l2}
    \notation{4}{f}{l2}[$16$]
    \beam{2}{b}{h}
    \notation{4}{b}{h}[$17$]
    \beam{2}{c}{h}
    \notation{4}{c}{h}[$18$]
    \beam{2}{c}{n}
    \notation{4}{c}{n}[$19$]
    \beam{2}{f}{n}
    \notation{4}{n}{f}[$20$]
    \draw[ultra thick] (i2) arc[start angle=45, end angle=225, radius=0.175];
    \draw[ultra thick] (l2) arc[start angle=45, end angle=225, radius=0.175];
    \node[circle, fill,minimum size=10pt,inner sep=0pt, outer sep=0pt] at (b) {};
    \notation{3}{a}{b}[][0.5][];
    \notation{3}{b}{c}[][0.5][];
    \notation{3}{b}{e}[][0.5][];
    \notation{3}{c}{f}[][0.5][];
    \notation{3}{d}{e}[][0.5][];
    \notation{3}{e}{f}[][0.5][];
    \support{3}{a}[270];
    \support{3}{d}[270];
    \load{1}{c}[-90][0.6][0.05];
    \notation{1}{c}{$10$~kN}[below right];
    \notation{1}{b}{$100$~kg}[below=1mm];
    \dimensioning{1}{a}{g}{-0.95}[$0.5$~m];
    \dimensioning{1}{g}{b}{-0.95}[$0.5$~m];
    \dimensioning{1}{b}{h}{-0.95}[$0.5$~m];
    \dimensioning{1}{h}{c}{-0.95}[$0.5$~m];
    \dimensioning{2}{a}{j}{5.75}[$0.5$~m];
    \dimensioning{2}{j}{d}{5.75}[$0.5$~m];
    \end{tikzpicture}
    \caption{}
    \label{fig:20bc}
\end{subfigure}%
\hfill\raisebox{2.6cm}{\begin{minipage}{0.5\linewidth}
\begin{subfigure}{0.5\linewidth}
\includegraphics[height=1.75cm]{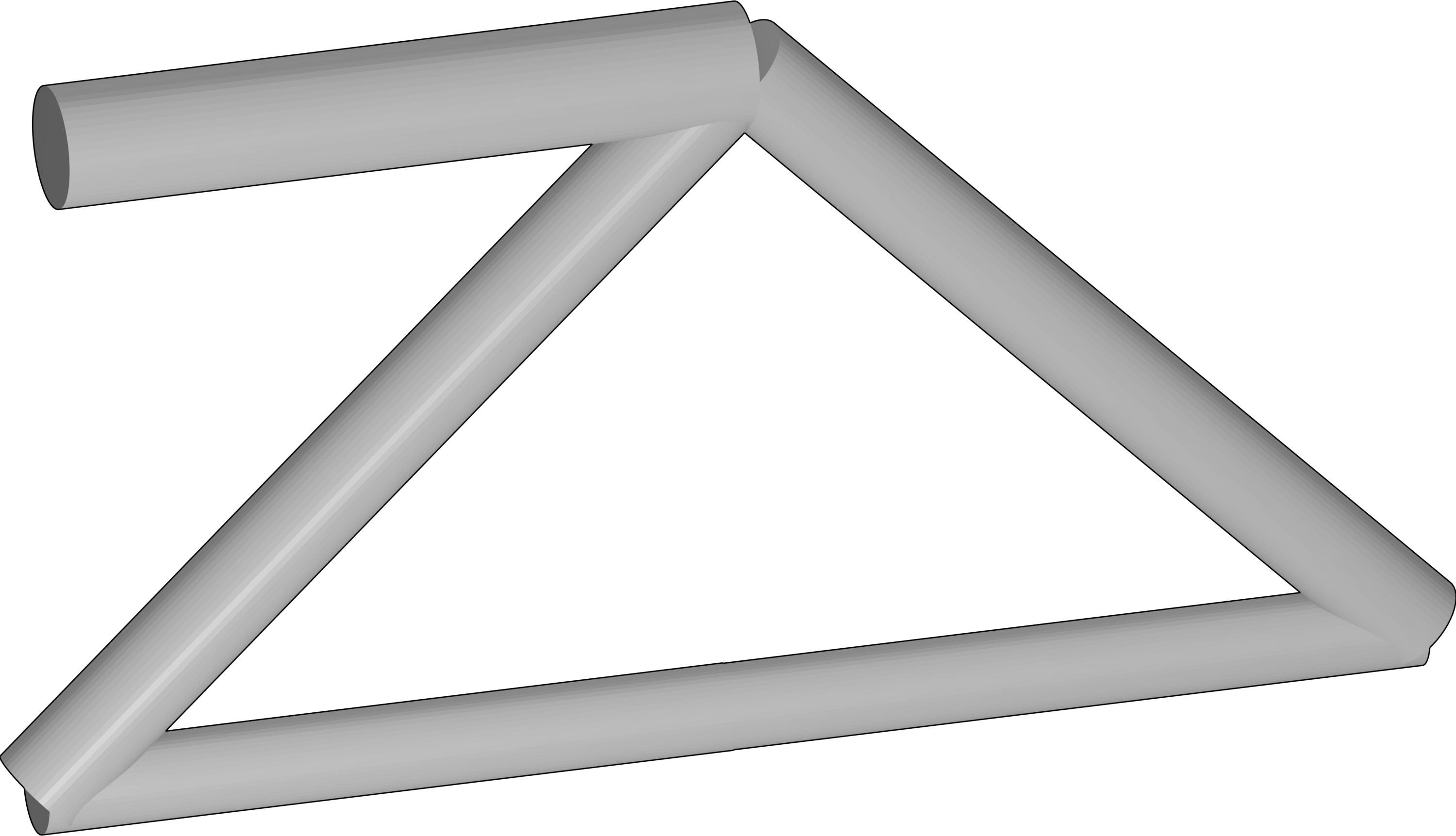}
\caption{}
\label{fig:20stat}
\end{subfigure}%
\hfill\begin{subfigure}{0.5\linewidth}
\includegraphics[height=1.75cm]{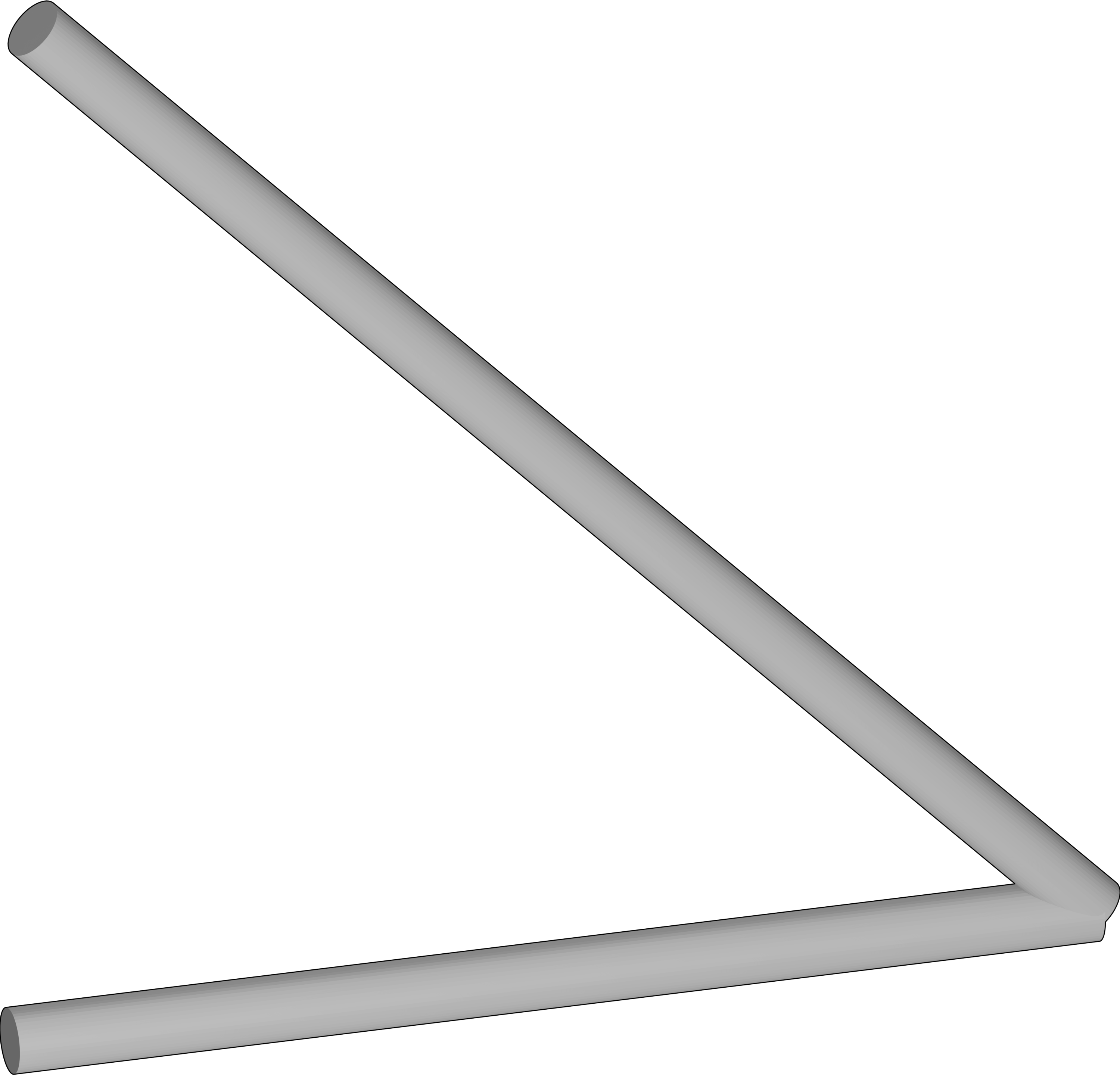}
\caption{}
\label{fig:20vib}
\end{subfigure}\\
\begin{subfigure}{0.5\linewidth}   
\includegraphics[height=1.75cm]{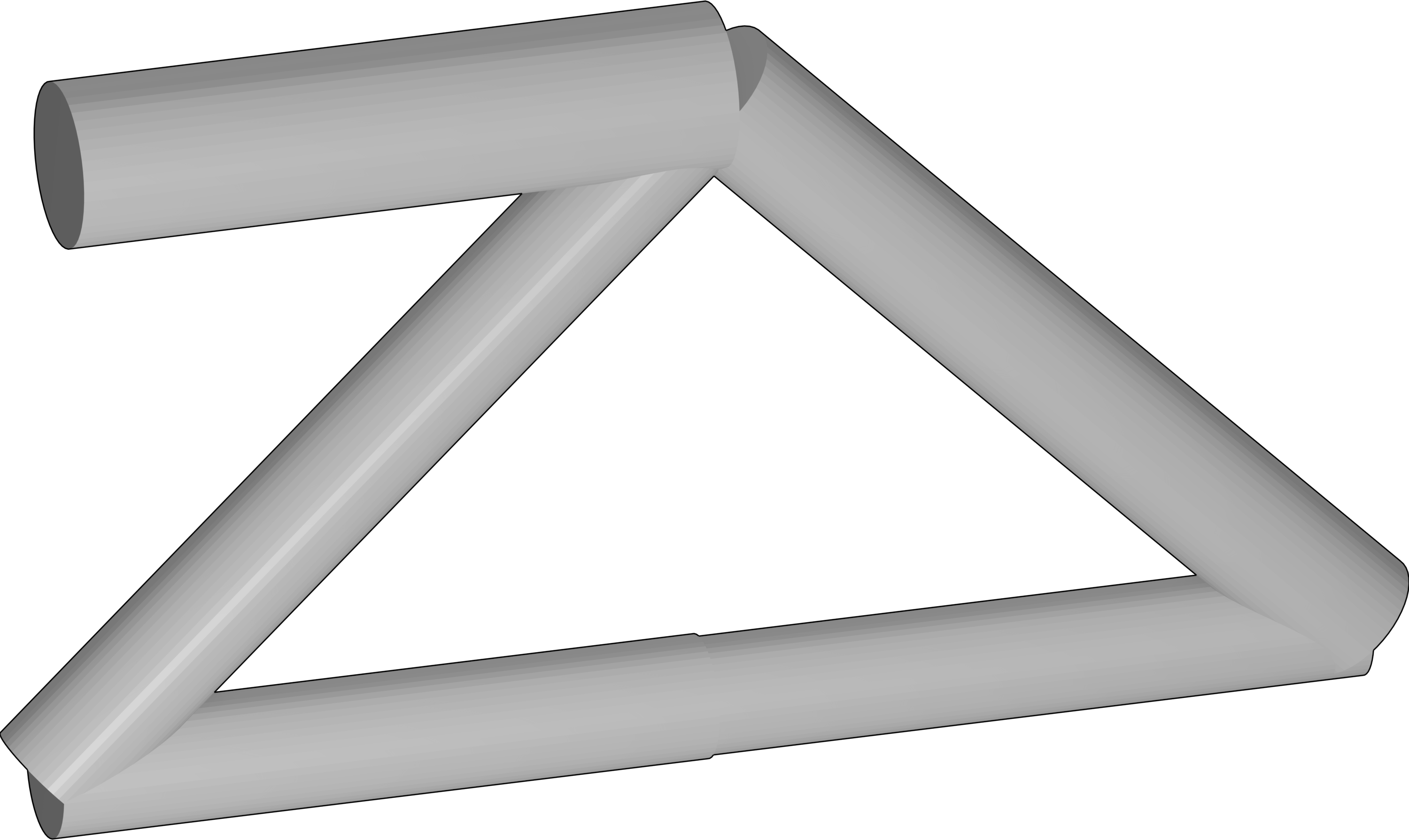}
\caption{}
\label{fig:20r1}
\end{subfigure}%
\hfill\begin{subfigure}{0.5\linewidth}
\includegraphics[height=1.75cm]{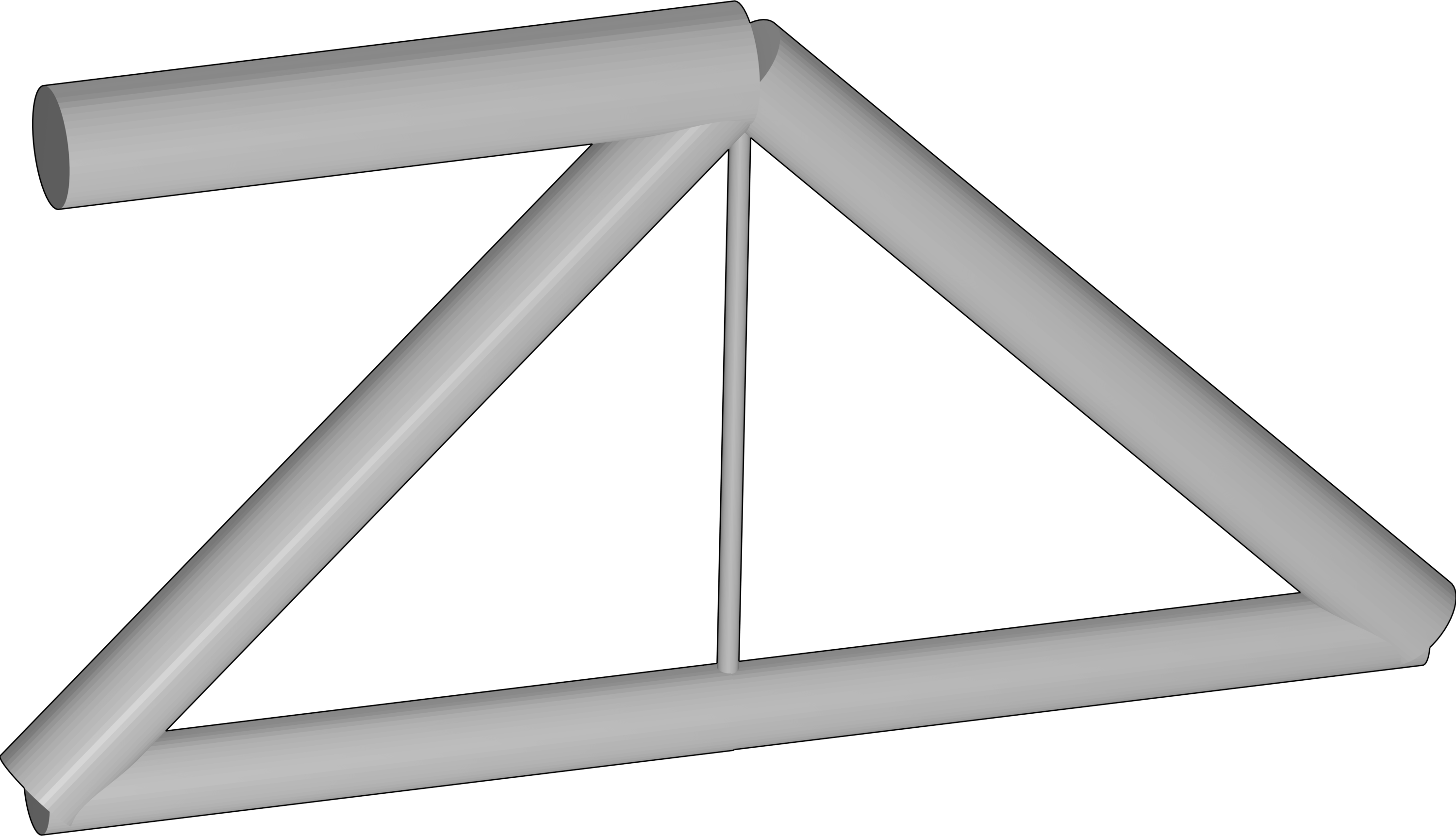}
\caption{}
\label{fig:20r2}
\end{subfigure}
\end{minipage}}
    \caption{20-element cantilever problem: (a) discretization and boundary conditions, optimal designs for the (b) compliance and (c) free-vibration constraints alone, and feasible upper-bound designs based on the (d) first and (e) second relaxation under compliance and free-vibration constraints. The design in (e) is globally optimal.}
    \label{fig:20}
\end{figure}

\subsubsection{MSOS hierarchy using canonical basis}
For illustrative purposes, we first investigate the solution to the problem without applying the free-vibration eigenvalue constraint. For this setting, the first relaxation provides a lower bound of $236.27$~kg and a feasible upper bound with the weight of $253.71$~kg. In the second relaxation, we reach an equality of bounds with the weight of $252.53$~kg, proving the global optimality of the design in Fig.~\ref{fig:20stat}.

In contrast, if we neglect the compliance constraint and optimize the structure for the free-vibration eigenvalue, the first relaxation provides a lower bound of $16.52$~kg and a feasible upper bound design weighting $17.88$~kg. The hierarchy converges again in the second relaxation, with the equality of bounds, the optimal weight of $17.84$~kg and the design shown in Fig.~\ref{fig:20vib}.

Finally, we solve the problem with both constraints applied simultaneously. Then, the initial feasible upper bound design based on Proposition \ref{th:feasibility_cond} weights $716.94$~kg. In the first relaxation, we reach a lower bound weight of $231.88$~kg and a feasible upper-bound design of the weight $504.56$~kg shown in Fig.~\ref{fig:20r1}. In the second relaxation, we achieve approximate equality of bounds at $254.47$~kg, proving the global $\varepsilon$-optimality of the design {weight} with a guarantee smaller than $5$~g, see Table~\ref{tab:20}. We note here that for the optimal design in Fig.~\ref{fig:20r2}, both the compliance and the free-vibration eigenvalue constraints are active.

\begin{table}[!htbp]
    \centering
    \begin{tabular}{rrrrrrr}
         $r$ & l.b.\ [kg] & u.b.\ [kg] & $\varepsilon$ [kg] & $\lvert \mathbf{y}\rvert$ & $n_\mathrm{c} \times m$  & $t$ [s]\\ \hline
         1 & $231.88$ & $504.56$ & $79.4$ & $65$ & $11\times1$, $1\times11$, $1\times42$, $1 \times 43$ & $0.2$\\
         2 & $254.47$ & $254.47$ & $5 \times 10^{-3}$ & $1000$ & $11\times11$, $1\times66$, $1\times462$, $1\times473$ & $38.6$
    \end{tabular}
    \caption{Numerical results of the $20$-element cantilever problem using the canonical basis: $r$ denotes the relaxation degree, l.b.\ and u.b.\ abbreviate the lower and upper bounds, and $\varepsilon$ denotes the absolute gap between the best u.b.\ and current l.b. Furthermore, $\lvert\mathbf{y}\rvert$ represents the number of moment variables, $n_\mathrm{c}\times m$ indicates that there are $n_\mathrm{c}$ constraints of size $m\times m$, and $t$ provides the computational time.}
    \label{tab:20}
\end{table}

\subsubsection{MSOS hiearchy using nonmixed term basis} Up to now, we have been solving the standard moment-sum-of-squares hierarchy as defined in Section~\ref{sec:background}, which is based on the moment matrices constructed using the canonical polynomial basis $\mathbf{b}_r(\mathbf{x})$; recall the program \eqref{eq:general_moment}. In our test experiments, we observed that the hierarchy also numerically converges if we remove all mixed terms from the canonical basis, resulting in the $\mathbf{b}_{\mathrm{NMT},r}(\mathbf{x})$ basis. Numerical convergence may be related to the fact that there are no mixed terms in the problem we are dealing with. Although we use this more scalable hierarchy to solve our problems globally, we do not have an asymptotic convergence proof for this case. 

Using this setting, the initial feasible upper bound and the first relaxation remain the same as in the standard hierarchy by construction. In the second relaxation, we obtain a smaller problem which is due to a smaller number of terms in the basis; see Table~\ref{tab:20nmt}. In this case, although the size of the problem was smaller, the time of solution was comparable. However, we were able to solve the second relaxation more accurately. In particular, we reached an approximate bound equality at the weight of $254.47$~kg with the guarantee of $1$~g only.

\begin{table}[!htbp]
    \centering
    \begin{tabular}{rrrrrrr}
         $r$ & l.b.\ [kg] & u.b.\ [kg] & $\varepsilon$ [kg] & $\lvert \mathbf{y}\rvert$ & $n_\mathrm{c} \times m$  & $t$ [s]\\ \hline
         1 & $231.88$ & $504.56$ & $79.4$ & $65$ & $11\times1$, $1\times11$, $1\times42$, $1 \times 43$ & $0.2$\\
         2 & $254.47$ & $254.47$ & $1 \times 10^{-3}$ & $790$ & $11\times11$, $1\times21$, $1\times462$, $1\times473$ & $36.7$
    \end{tabular}
    \caption{Numerical results of the $20$-element cantilever problem using the nonmixed-term basis: $r$ denotes the relaxation degree, l.b.\ and u.b.\ abbreviate the lower and upper bounds, and $\varepsilon$ denotes the absolute gap between the best u.b.\ and current l.b. Furthermore, $\lvert\mathbf{y}\rvert$ represents the number of moment variables, $n_\mathrm{c}\times m$ indicates that there are $n_\mathrm{c}$ constraints of size $m\times m$, and $t$ provides the computational time.}
    \label{tab:20nmt}
\end{table}

{
\subsection{Multiple-loadcase problem}

Next, we consider the problem of minimizing the weight of a structure shown in Fig.~\ref{fig:multilc}a of the total length $0.8$~m. The structure consists of $4$ segments, each of them having a constant rectangular cross-section area of prescribed width $0.1$~m and optimized height. Discretizing each segment with two Euler-Bernoulli elements, we obtain $8$ elements and $4$ cross-sectional variables, i.e., $a_1 = a_2$, $a_3=a_4$, $a_5=a_6$, and $a_7=a_8$. All elements are made of linear elastic material with the Young modulus $E=210.0$~GPa and density $\rho=7,800$~kg/m$^3$. Further, we place a nonstructural mass $m=1$~kg at the middle of the structure.

We consider two load cases, differing in the kinematic boundary conditions and natural frequency constraints. In the first case, shown at the top part of Fig.~\ref{fig:multilc}a, we apply simply-supported boundary conditions and require the eigenfrequency to be at least $300$~Hz (i.e., $\underline{\lambda}_1 = 600^2\pi^2$). For the second case, shown at the bottom part of Fig.~\ref{fig:multilc}a, we clamp the left end and require the lowest nonzero eigenfrequency to be at least $200$~Hz (i.e., $\underline{\lambda}_2 = 400^2\pi^2$).

\begin{figure}[!htbp]
\begin{subfigure}{0.5\linewidth}
\begin{tikzpicture}
    \scaling{6.5}
    \point{a}{0.000000}{0}
    \point{b}{0.100000}{0}
    \point{c}{0.200000}{0}
    \point{d}{0.300000}{0}
    \point{e}{0.400000}{0}
    \point{f}{0.500000}{0}
    \point{g}{0.600000}{0}
    \point{h}{0.700000}{0}
    \point{i}{0.800000}{0}
    \beam{2}{a}{b}
    \notation{4}{a}{b}[$1$]
    \beam{2}{b}{c}
    \notation{4}{b}{c}[$2$]
    \beam{2}{c}{d}
    \notation{4}{c}{d}[$3$]
    \beam{2}{d}{e}
    \notation{4}{d}{e}[$4$]
    \beam{2}{e}{f}
    \notation{4}{e}{f}[$5$]
    \beam{2}{f}{g}
    \notation{4}{f}{g}[$6$]
    \beam{2}{g}{h}
    \notation{4}{g}{h}[$7$]
    \beam{2}{h}{i}
    \notation{4}{h}{i}[$8$]
    \node[circle, fill,minimum size=10pt,inner sep=0pt, outer sep=0pt] at (e) {};
    \support{3}{a}[270];
    \dimensioning{1}{a}{b}{-0.95}[$0.1$];
    \dimensioning{1}{b}{c}{-0.95}[$0.1$];
    \dimensioning{1}{c}{d}{-0.95}[$0.1$];
    \dimensioning{1}{d}{e}{-0.95}[$0.1$];
    \dimensioning{1}{e}{f}{-0.95}[$0.1$];
    \dimensioning{1}{f}{g}{-0.95}[$0.1$];
    \dimensioning{1}{g}{h}{-0.95}[$0.1$];
    \dimensioning{1}{h}{i}{-0.95}[$0.1$];

    \notation{1}{e}{$1$~kg}[above=7mm];

    \point{an}{0.000000}{0.25}
    \point{bn}{0.100000}{0.25}
    \point{cn}{0.200000}{0.25}
    \point{dn}{0.300000}{0.25}
    \point{en}{0.400000}{0.25}
    \point{fn}{0.500000}{0.25}
    \point{gn}{0.600000}{0.25}
    \point{hn}{0.700000}{0.25}
    \point{in}{0.800000}{0.25}
    \beam{2}{an}{bn}
    \notation{4}{an}{bn}[$1$]
    \beam{2}{bn}{cn}
    \notation{4}{bn}{cn}[$2$]
    \beam{2}{cn}{dn}
    \notation{4}{cn}{dn}[$3$]
    \beam{2}{dn}{en}
    \notation{4}{dn}{en}[$4$]
    \beam{2}{en}{fn}
    \notation{4}{en}{fn}[$5$]
    \beam{2}{fn}{gn}
    \notation{4}{fn}{gn}[$6$]
    \beam{2}{gn}{hn}
    \notation{4}{gn}{hn}[$7$]
    \beam{2}{hn}{in}
    \notation{4}{hn}{in}[$8$]
    \node[circle, fill,minimum size=10pt,inner sep=0pt, outer sep=0pt] at (en) {};
    \support{1}{an}[0];
    \support{2}{in}[0];
\end{tikzpicture}%
\caption{}
\end{subfigure}
\hfill\begin{subfigure}{0.45\linewidth}
\includegraphics[width=\linewidth]{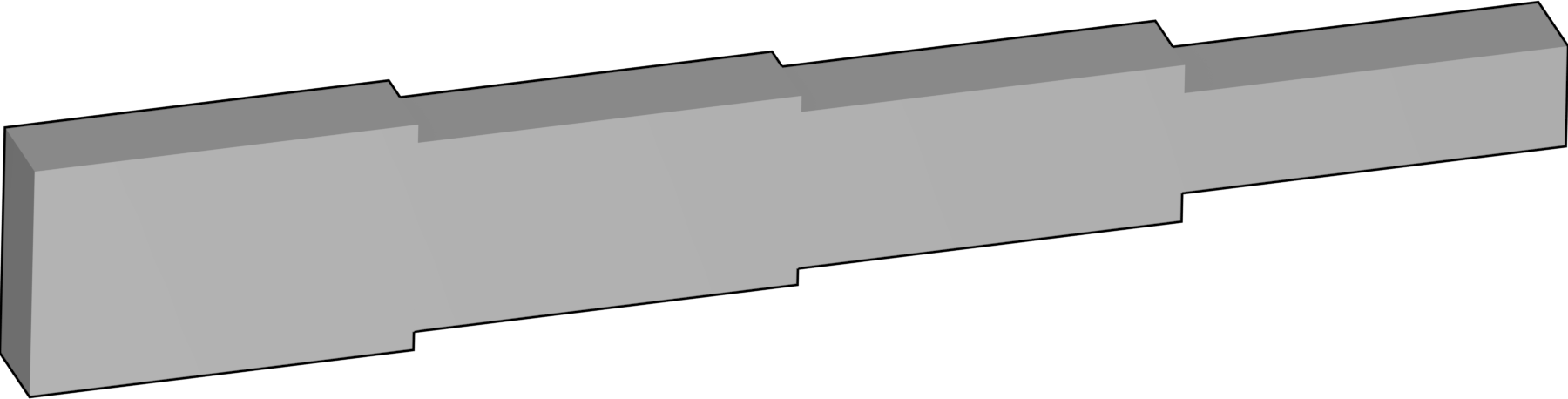}
\\
\vspace{8mm}
\caption{}
\end{subfigure}
\caption{Multiple-loadcase problem: (a) discretization and boundary conditions, and (b) globally-optimal design.}
\label{fig:multilc}
\end{figure}

Again, we employed the moment-sum-of-squares hieararchy with the nonmixed-term basis to solve the problem. In this case, the stiffness matrix is a degree-three polynomial function of the cross-section areas, see Appendix~\ref{app:polydep}, so we need to start with $r=2$. This relaxation provides us with the lower bound of $0.05$~kg and the feasible upper-bound design of the weight $37.50$~kg. In the third relaxation, we reach a lower bound of $2.75$~kg and the corresponding upper bound of $37.00$~kg. The fourth relaxation provides us with a lower bound of $29.22$~kg and an upper bound of $29.23$~kg, which is almost equal to the weight of the globally-optimal design shown in Fig.~\ref{fig:multilc}b. Finally, in the fifth relaxation, we reach a lower bound of $29.23$~kg and an upper bound of $29.23$~kg, proving the global $\varepsilon$-optimality of the design with a guarantee smaller than $3$~g; see Table~\ref{tab:multilc}. In this case, both the eigenvalue constraints are active at the optimal design.

\begin{table}[!htbp]
    \centering
    \setlength{\tabcolsep}{5pt}
    \begin{tabular}{rrrrrrr}
         $r$ & l.b.\ [kg] & u.b.\ [kg] & $\varepsilon$ [kg] & $\lvert \mathbf{y}\rvert$ & $n_\mathrm{c} \times m$  & $t$ [s]\\ \hline
         2 & $0.05$ & $37.50$ & $37.44$ & $68$ & $5\times5$, $1\times9$, $2\times24$ & $0.09$\\
         3 & $2.75$ & $37.00$ & $34.25$ & $146$ & $5\times9$, $1\times13$, $2\times120$ & $1.49$\\
         4 & $29.22$ & $29.23$ & $8.9\times10^{-3}$ & $292$ & $5\times13$, $1\times17$, $2\times216$ & $4.40$\\
         5 & $29.23$ & $29.23$ & $2.9\times10^{-3}$ & $514$ & $5\times17$, $1\times21$, $2\times312$ & $13.33$
    \end{tabular}
    \caption{Numerical results of the multiple-load case problem using the nonmixed-term basis: $r$ denotes the relaxation degree, l.b.\ and u.b.\ abbreviate the lower and upper bounds, and $\varepsilon$ denotes the absolute gap between the best u.b.\ and current l.b. Furthermore, $\lvert\mathbf{y}\rvert$ represents the number of moment variables, $n_\mathrm{c}\times m$ indicates that there are $n_\mathrm{c}$ constraints of size $m\times m$, and $t$ provides the computational time.}
    \label{tab:multilc}
\end{table}
}

\subsection{52-element problem}\label{sec:ex52}

As the last example, we investigate the minimization of the weight of the structure shown in Fig.~\ref{fig:52bc}. {Similarly to the previous example}, we adopt here the nonmixed-term basis{, as it} would be {im}possible {to obtain the global solution} using the {available} hardware otherwise. The problem consists of $26$ frame segments, each of them having a constant cross-section area and discretized with two Euler-Bernoulli frame elements. Consequently, we have $52$ elements and $26$ cross-sectional variables. All elements have square cross sections made of linear elastic material with Young modulus $E=68.9$~GPa and density $\rho=2,770$~kg/m$^3$.

We again have a single loading scenario: the left nodes of the structure are clamped, an upward vertical force of $10$~kN acts at the bottom right node, and nonstructural masses of $100$~kg are placed in the middle of the bottom and top boundary of the domain. For this setting, we have $96$ degrees of freedom, so that $\mathbf{K}(\mathbf{a}), \mathbf{M}(\mathbf{a}) \in \mathbb{S}^{96}_{\succeq 0}$.

For optimization constraints, we impose the compliance upper bound $\overline{c} = 1$, which limits the displacement at the point and direction of the load to $0.1$~mm, and a lower bound of the fundamental free-vibration eigenvalue $\underline{\lambda} = 40000\pi^2$~rad/s$^2$, which requires the lowest nonzero eigenfrequency to be at least $100$~Hz.

When using the compliance constraint alone, the first relaxation provides us with the lower bound of $35.17$~kg and the corresponding feasible upper-bound design of the weight $36.02$~kg. In the second relaxation, we reach a bound equality at the weight of $35.967$~kg, see Fig.~\ref{fig:52w} for the optimal design. On the other hand, imposing the free-vibration constraint alone leads to the lower bound of $7.79$~kg in the first relaxation, and the corresponding upper bound has the weight of $7.97$~kg. In the second relaxation, we again reach a bound equality at the weight of $7.97$~kg and the design shown in Fig.~\ref{fig:52fv}.

\begin{figure}[!htbp]
    \begin{subfigure}{0.45\linewidth}
    \begin{tikzpicture}
        \scaling{3.5}
        \point{n1}{0.000000}{0.000000}
        \point{n2}{0.000000}{0.500000}
        \point{n3}{0.000000}{1.000000}
        \point{n4}{0.500000}{0.000000}
        \point{n5}{0.500000}{0.500000}
        \point{n6}{0.500000}{1.000000}
        \point{n7}{1.000000}{0.000000}
        \point{n8}{1.000000}{0.500000}
        \point{n9}{1.000000}{1.000000}
        \point{n10}{0.250000}{0.000000}
        \point{n11}{0.250000}{0.250000}
        \point{n12}{0.250000}{0.500000}
        \point{n13}{0.500000}{0.250000}
        \point{n14}{0.250000}{0.250000}
        \point{n15}{0.250000}{0.500000}
        \point{n16}{0.250000}{0.750000}
        \point{n17}{0.500000}{0.250000}
        \point{n18}{0.500000}{0.750000}
        \point{n19}{0.250000}{0.500000}
        \point{n20}{0.250000}{0.750000}
        \point{n21}{0.250000}{1.000000}
        \point{n22}{0.500000}{0.750000}
        \point{n23}{0.500000}{0.250000}
        \point{n24}{0.750000}{0.000000}
        \point{n25}{0.750000}{0.250000}
        \point{n26}{0.750000}{0.500000}
        \point{n27}{0.500000}{0.750000}
        \point{n28}{0.750000}{0.250000}
        \point{n29}{0.750000}{0.500000}
        \point{n30}{0.750000}{0.750000}
        \point{n31}{0.750000}{0.500000}
        \point{n32}{0.750000}{0.750000}
        \point{n33}{0.750000}{1.000000}
        \point{n34}{1.000000}{0.250000}
        \point{n35}{1.000000}{0.750000}
        \beam{2}{n1}{n10}
        \beam{2}{n4}{n10}
        \beam{2}{n1}{n11}
        \beam{2}{n5}{n11}
        \beam{2}{n1}{n12}
        \beam{2}{n6}{n12}
        \beam{2}{n1}{n13}
        \beam{2}{n8}{n13}
        \beam{2}{n2}{n14}
        \beam{2}{n4}{n14}
        \beam{2}{n2}{n15}
        \beam{2}{n5}{n15}
        \beam{2}{n2}{n16}
        \beam{2}{n6}{n16}
        \beam{2}{n2}{n17}
        \beam{2}{n7}{n17}
        \beam{2}{n2}{n18}
        \beam{2}{n9}{n18}
        \beam{2}{n3}{n19}
        \beam{2}{n4}{n19}
        \beam{2}{n3}{n20}
        \beam{2}{n5}{n20}
        \beam{2}{n3}{n21}
        \beam{2}{n6}{n21}
        \beam{2}{n3}{n22}
        \beam{2}{n8}{n22}
        \beam{2}{n4}{n23}
        \beam{2}{n5}{n23}
        \beam{2}{n4}{n24}
        \beam{2}{n7}{n24}
        \beam{2}{n4}{n25}
        \beam{2}{n8}{n25}
        \beam{2}{n4}{n26}
        \beam{2}{n9}{n26}
        \beam{2}{n5}{n27}
        \beam{2}{n6}{n27}
        \beam{2}{n5}{n28}
        \beam{2}{n7}{n28}
        \beam{2}{n5}{n29}
        \beam{2}{n8}{n29}
        \beam{2}{n5}{n30}
        \beam{2}{n9}{n30}
        \beam{2}{n6}{n31}
        \beam{2}{n7}{n31}
        \beam{2}{n6}{n32}
        \beam{2}{n8}{n32}
        \beam{2}{n6}{n33}
        \beam{2}{n9}{n33}
        \beam{2}{n7}{n34}
        \beam{2}{n8}{n34}
        \beam{2}{n8}{n35}
        \beam{2}{n9}{n35}
        \node[circle, fill,minimum size=10pt,inner sep=0pt, outer sep=0pt] at (n4) {};
        \node[circle, fill,minimum size=10pt,inner sep=0pt, outer sep=0pt] at (n6) {};
        \support{3}{n1}[270];
        \support{3}{n2}[270];
        \support{3}{n3}[270];
        \load{1}{n7}[90][-0.6][-0.1];
        \notation{1}{n7}{$10$~kN}[below right];
        \notation{1}{n4}{$100$~kg}[below=1mm];
        \notation{1}{n6}{$100$~kg}[above=1mm];
        \dimensioning{1}{n1}{n4}{-0.95}[$0.5$~m];
        \dimensioning{1}{n4}{n7}{-0.95}[$0.5$~m];
        \dimensioning{2}{n1}{n2}{4.25}[$0.5$~m];
        \dimensioning{2}{n2}{n3}{4.25}[$0.5$~m];
    \end{tikzpicture}
    \caption{}
    \label{fig:52bc}
    \end{subfigure}%
    \raisebox{2.825cm}{\begin{minipage}{0.55\linewidth}
        \centering%
    \begin{subfigure}{0.3\linewidth}
        \includegraphics[height=0.9\linewidth]{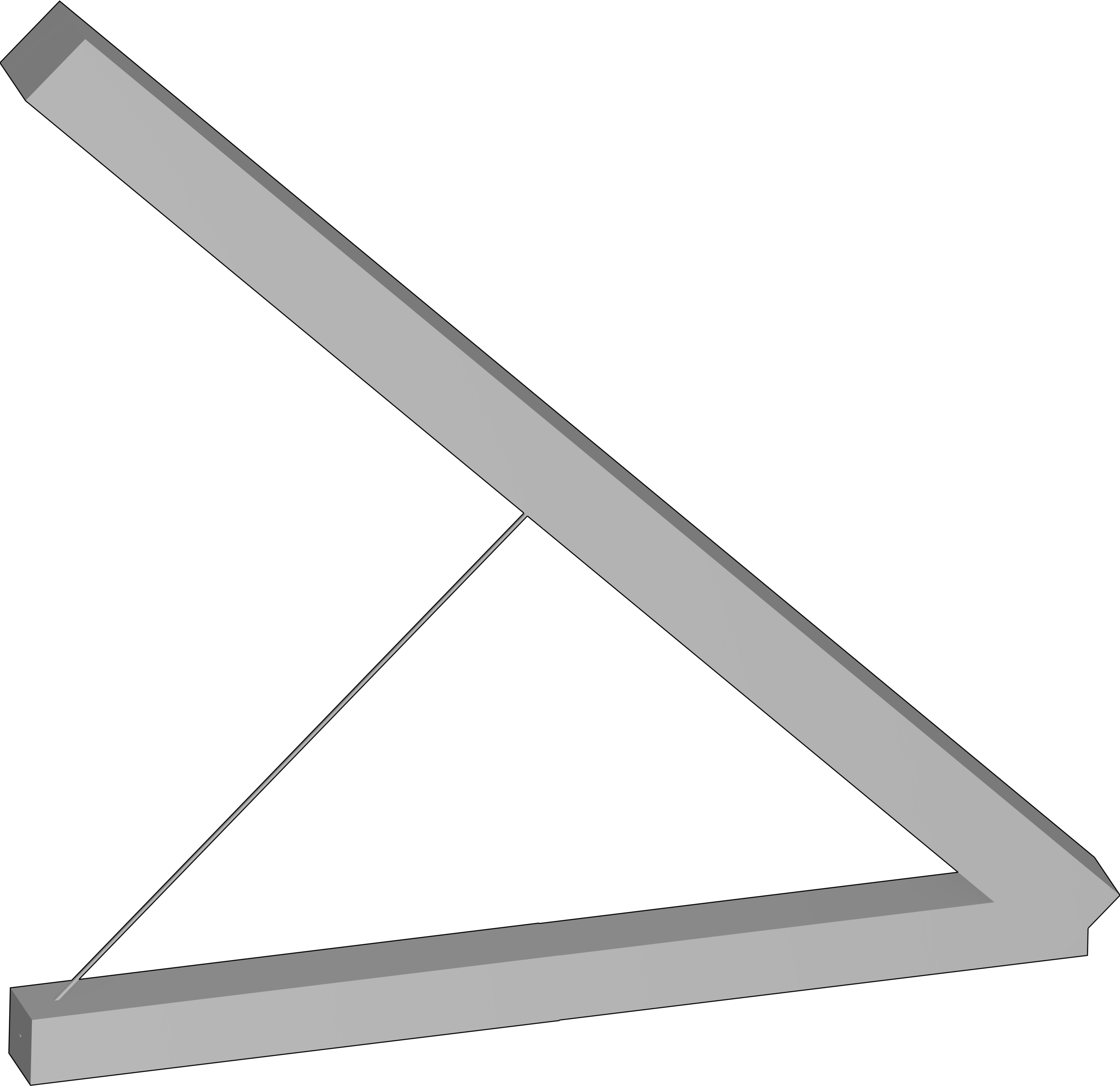}
        \caption{}
        \label{fig:52w}
    \end{subfigure}%
    \hspace{0.03\linewidth}\begin{subfigure}{0.3\linewidth}
        \includegraphics[height=0.9\linewidth]{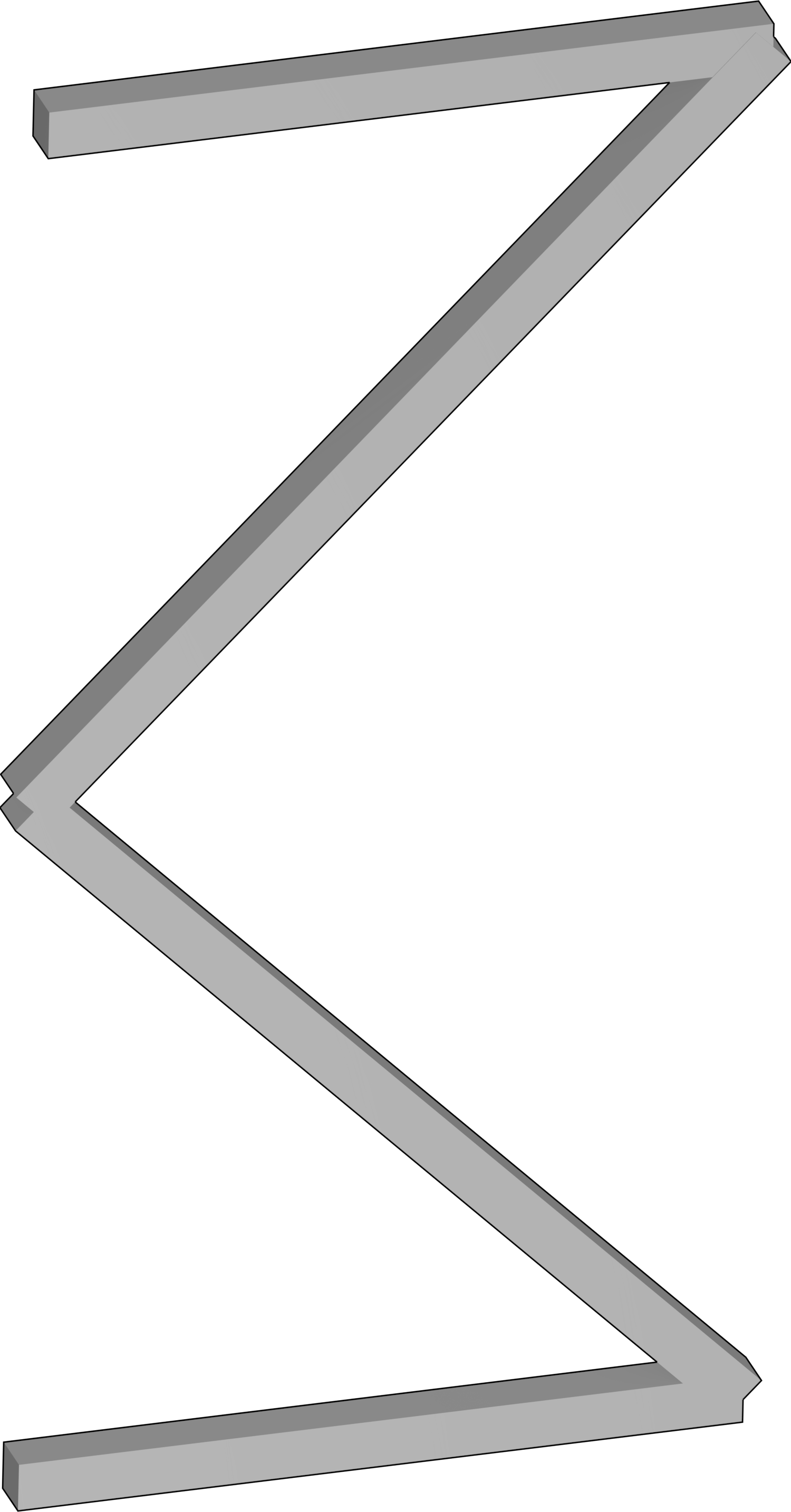}
        \caption{}
        \label{fig:52fv}
    \end{subfigure}\\
    \begin{subfigure}{0.3\linewidth}
        \includegraphics[height=0.9\linewidth]{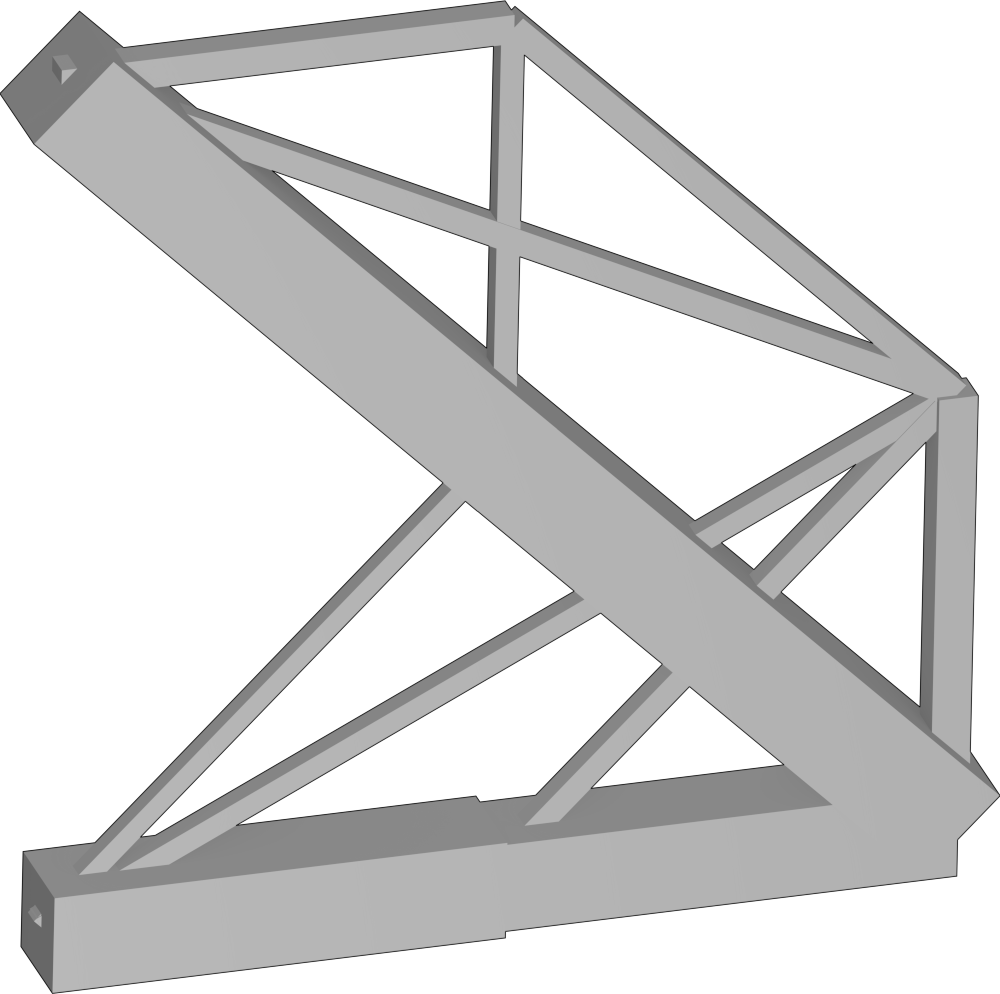}
        \caption{}
        \label{fig:52r1}
    \end{subfigure}%
    \hfill\begin{subfigure}{0.3\linewidth}
        \includegraphics[height=0.9\linewidth]{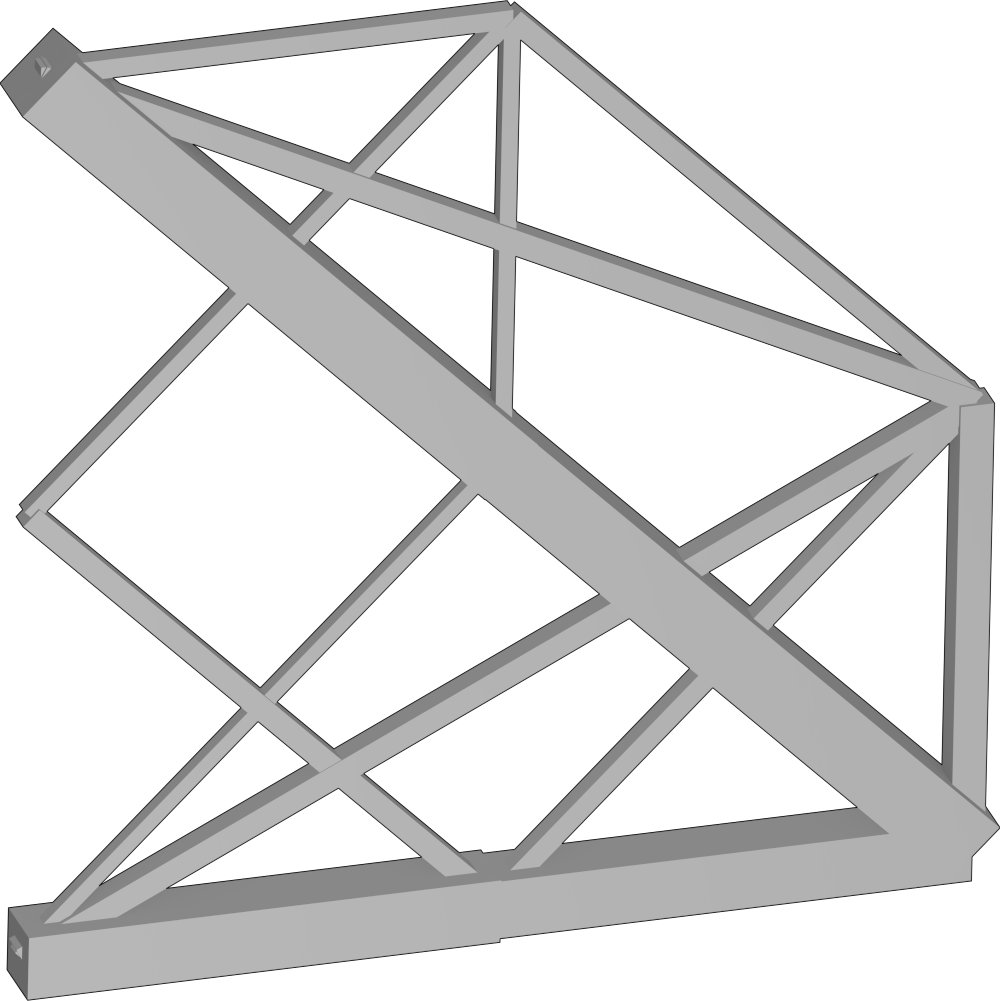}
        \caption{}
        \label{fig:52r2}
    \end{subfigure}%
    \hfill\begin{subfigure}{0.3\linewidth}
        \includegraphics[height=0.9\linewidth]{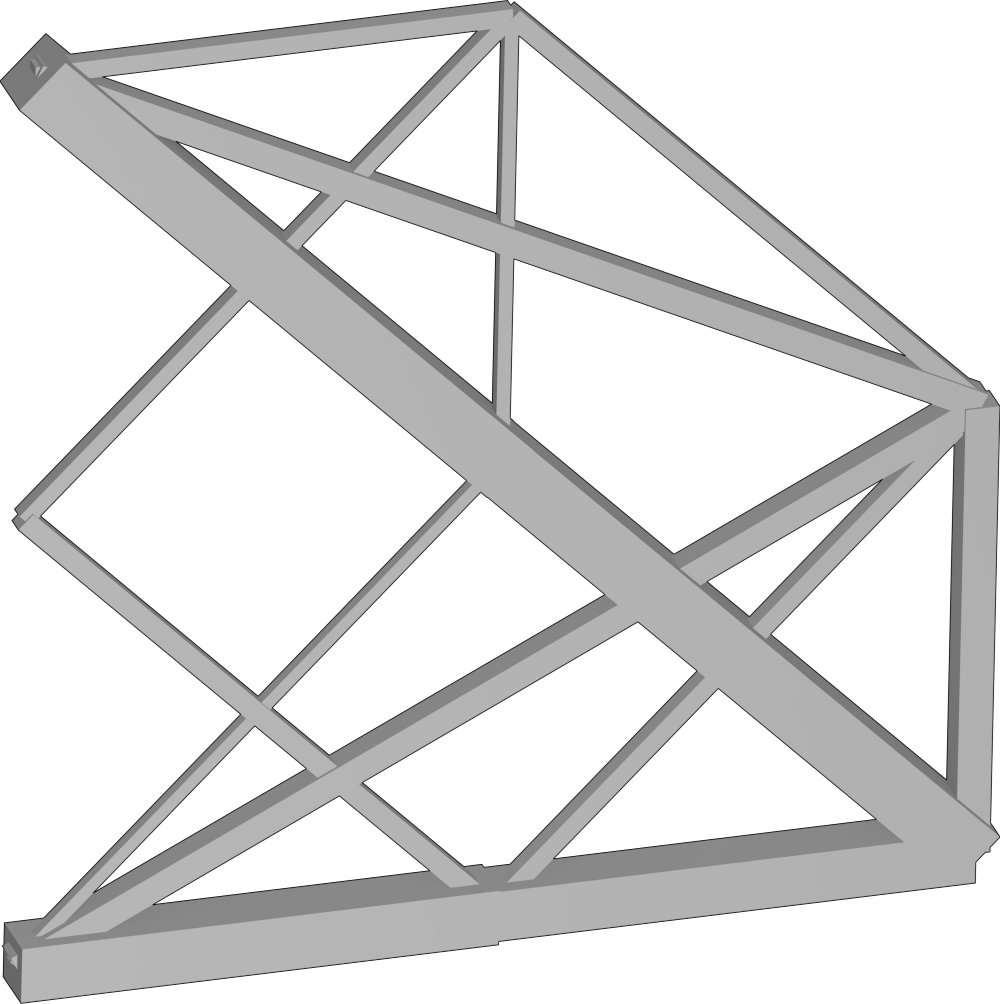}
        \caption{}
        \label{fig:52r3}
    \end{subfigure}
    \end{minipage}}
    \caption{52-element problem: (a) discretization and boundary conditions, optimal
designs for the (b) compliance and (c) free-vibration constraints alone, and feasible upper-bound designs based on the (d) first, (e) second, and (f) third relaxation under compliance and free-vibration constraints. The design in (f) is globally optimal.}
    \label{fig:52}
\end{figure}

Finally, we solve the optimization problem while accounting for both constraints. First, we find an initial feasible upper bound design based on Proposition \ref{th:feasibility_cond}, which has the weight of $238.51$~kg. After using this bound to make the feasible space of the variables compact, we adopt the formulation \eqref{eq:opt_po} and solve the moment-sum-of-squares hierarchy of the relaxations. In the first, we reach a lower bound of $34.23$~kg. The associated feasible upper-bound design has the weight of $78.16$~kg, see Fig.~\ref{fig:52r1}, which makes the optimality gap $43.93$~kg. The second relaxation provides us with the lower bound of $37.65$~kg, which is almost equal to the optimal weight, but the corresponding upper bound design in Fig.~\ref{fig:52r2} weights $44.04$~kg, so $\varepsilon=6.39$~kg. Consequently, we have to continue with the hierarchy. At the third relaxation, finite convergence occurs: the lower and upper bounds have the same weight of $37.67$~kg, so that the design in Fig.~\ref{fig:52r3} is globally optimal.

Although the global solution has been reached, the computational time needed for the solution to the third relaxation was significant; see Table \ref{tab:52nmt}. Moreover, we would not be able to solve the third relaxation while relying on the canonical basis.

\begin{table}[!htbp]
    \centering
    \setlength{\tabcolsep}{5pt}
    \begin{tabular}{rrrrrrr}
         $r$ & l.b.\ [kg] & u.b.\ [kg] & $\varepsilon$ [kg] & $\lvert \mathbf{y}\rvert$ & $n_\mathrm{c} \times m$  & $t$ [s]\\ \hline
         1 & $34.23$ & $78.16$ & $43.93$ & $377$ & $27\times1$, $1\times27$, $1\times96$, $1\times97$ & $0.4$\\
         2 & $37.65$ & $44.04$ & $6.39$ & $12454$ & $27\times27$, $1\times53$, $1\times2592$, $1\times2619$ & $2839$\\
         3 & $37.67$ & $37.67$ & $0$ & $25181$ & $27\times53$, $1\times79$, $1\times5088$, $1\times5141$ & $23765$
    \end{tabular}
    \caption{Numerical results of the $52$-element cantilever problem using the nonmixed-term basis: $r$ denotes the relaxation degree, l.b.\ and u.b.\ abbreviate the lower and upper bounds, and $\varepsilon$ denotes the absolute gap between the best u.b.\ and current l.b. Furthermore, $\lvert\mathbf{y}\rvert$ represents the number of moment variables, $n_\mathrm{c}\times m$ indicates that there are $n_\mathrm{c}$ constraints of size $m\times m$, and $t$ provides the computational time.}
    \label{tab:52nmt}
\end{table}

\section{Conclusion}

In this contribution, we have introduced a methodology for globally optimizing frame structures subject to fundamental free-vibration eigenvalue and compliance constraints using the moment-sum-of-squares hierarchy. This work represents a significant extension of the truss setting of Achtziger and Ko\v{c}vara~\cite{Achtziger2008} to the nonconvex setting of the frame elements, while simultaneously extending the work of Tyburec~et al.~\cite{Tyburec2023} to free vibrations. To the best of our knowledge, this is the first time such problems have been solved globally with convergence guarantees. 

Starting with a nonlinear semidefinite programming formulation of the optimization problem, we introduced its bilevel variant: searching for the minimal scaling of fixed cross-sectional area ratios in the lower-level problem and optimizing over the ratios in the upper level. We showed that the lower-level problem is quasiconvex in the scaling parameter and {has a non-empty interior} if and only if i) the cross-section area ratios are statically admissible, i.e., the force vector is in the range space of the stiffness matrix for given cross-section ratios,  ii) the cross-section area ratios are mass admissible, i.e., the mass matrix is in the range space of the stiffness matrix for the cross-section ratios, and iii) {certain} membrane-only eigenvalues are greater than or equal to the free-vibration eigenvalue lower bounds. For any such cross-section ratios, it is possible to find a feasible point (upper bound {design}) to the original nonlinear semidefinite program by finding a globally minimal scaling of the cross-section ratios using bisection.

Because these conditions on the cross-section ratios are semidefinite representable, a feasible point to the original problem can be constructed based on a solution to a linear semidefinite program. We adopted this bound to make the feasible set of the original nonlinear semidefinite program compact and proved that the Archimedean assumption needed for the convergence of the moment-sum-of-squares hierarchy is then satisfied. Consequently, it is possible to use the hierarchy of convex relaxations for a global solution of the free-vibration problems.

When solving the hierarchy of convex relaxations, we naturally obtain a monotonic sequence of lower bounds. In addition, we showed that the relaxed cross-sectional areas, constructed on the basis of the first-order moments, also satisfy the conditions for feasibility of the lower-level problem in the bilevel reformulation. Consequently, we constructed feasible upper bounds in each relaxation, tightening the feasible set, and also assess{ing} relaxation quality based on the gap between the lower and upper bounds. With this, we obtained a simple sufficient condition of global $\varepsilon$-optimality of the upper bounds, and further proved that this condition is tight in the limit under the assumption of the set of the global minimizers being convex.

We illustrated the theoretical results using {four} numerical examples. In the first, academic problem, we visualized the strong singularity phenomenon of frame structures under free-vibration eigenvalue constraints, and also assessed the effect of the feasible set tightening by an additional weight constraint. The second example is a middle-sized problem that can still be solved using the standard mSOS hierarchy globally. In this problem, we showed that it is possible to solve the problem globally even when relying on a nonmixed-term polynomial basis. {The third example illustrated applicability of the method to problems with multi-loadcase problems with varying kinematic boundary conditions.} The final example revealed the scalability of the approach when relying on the nonmixed-term basis.

We conclude this article with an outlook for related future works. These include extension of the procedure to the problem of harmonic oscillations \cite{ma2024truss}, investigation of the convergence of the hierarchy based on the nonmixed term basis, {exploiting problem symmetries, studying the influence of mesh refinement on the global solutions}, and solving small continuum topology optimization problems.


\appendix
{
\section{Stiffness and mass matrices of Euler-Bernoulli beam elements}\label{app:polydep}

This appendix explains how the polynomial representation of the stiffness matrix and the linear representation of the mass matrix arise for prismatic Euler-Bernoulli frame elements in the simplest case of the cross-section scaling.

\subsection{Stiffness Matrix}

For an Euler-Bernoulli frame element $e$ with the cross-section area $a_e$, the stiffness matrix consists from membrane (m) and bending (b) contributions
$$\mathbf{K}_{e}(a_e) = \mathbf{L}_e^\mathrm{T} \mathbf{T}_e^\mathrm{T} \left[\mathbf{K}_{e}^{\text{m}}(a_e) + \mathbf{K}_{e}^{\text{b}}(a_e)\right] \mathbf{T}_e \mathbf{L}_e,$$
in which $\mathbf{T}_e$ is the transformation and $\mathbf{L}_e$ the localizing matrix \cite{Fish2007}.

The axial contribution depends linearly on the cross-section area:
$$\mathbf{K}_{e}^{\text{m}}(a_e) = E_e a_e \int_{0}^{\ell_e} \left(\mathbf{B}_e^\mathrm{m}(x)\right)^\mathrm{T} \mathbf{B}_e^\mathrm{m}(x) \, \mathrm{d}x,$$
where $E_e$ is Young's modulus, $\ell_e$ is the element length, and $\mathbf{B}^\mathrm{m}_e(x)$ is the membrane strain-displacement matrix. This gives rise to the first term $a_e \mathbf{K}_{j,e}^{\langle 1\rangle}$ in \eqref{eq:stiffness}.

The bending contribution depends on the elementwise constant second moment of area $I_e = \int_{A_e} z^2 \, \mathrm{d}A$ of the cross-section, where $A_e$ denotes the cross-section domain and $z$ is the distance from the neutral axis in the direction of the bending load. The bending stiffness matrix then becomes:
$$\mathbf{K}_{e}^{\text{b}}(a_e) = E_e I_e \int_{0}^{\ell_e} \left(\mathbf{B}_e^\mathrm{b}(x)\right)^\mathrm{T} \mathbf{B}_e^\mathrm{b}(x) \, \mathrm{d}x$$
with $\mathbf{B}_e^\mathrm{b}(x)$ being the bending strain-displacement matrix. 

Assume now that the cross-section shape is fixed. Let us denote the reference cross-section area as $a_0$ and the corresponding moment of inertia as $I_0$. The scaled cross-section area will be denoted as $a_e$. In what follows, we will consider the cases of scaling the cross-section in the $y$-direction, $z$-direction, or both directions concurrently.

Consider first $\theta$-scaling the cross-section in the direction $y$, orthogonal to the bending load. Then, we have the coordinate transformation $(y,z)\rightarrow (\theta y, z)$, so that the scaled area is $a_e = \theta a_0$. Thus, the second moment of area reads as
$$
I_e = \int_{A_{e}} z^2 \,\mathrm{d}A = \int_{A_{0,e}} z^2 \theta \,\mathrm{d}A = \theta \int_{A_{0,e}} z^2 \,\mathrm{d}A = \theta I_0 = \frac{a_e}{a_0} I_0,
$$
which shows that, in this case, $I_e (a_e)$ is a linear function in $a_e$.

Second, consider $\theta$-scaling of the cross-section in the direction of the bending load. Then, we have $(y,z)\rightarrow (y, \theta z)$ and $a_e = \theta a_0$ again, and
$$
I_e = \int_{A_e} z^2  \,\mathrm{d}A = \int_{A_{0,e}} \left(\theta z\right)^2  \theta \,\mathrm{d}A = \theta^3 \int_{A_{0,e}} z^2 \,\mathrm{d}A = \theta^3 I_0 = \left( \frac{a_e}{a_0}\right)^3 I_0,
$$
so that $I_e (a_e)$ is a cubic polynomial in $a_e$.

Third, let us scale the cross-section by $\theta$ in both directions. Then, we have $(y,z)\rightarrow (\theta y, \theta z)$ and $a_e = \theta^2 a_0$, i.e.,
$$
I_e = \int_{A_e} z^2  \,\mathrm{d}A= \int_{A_{0,e}} \left(\theta z\right)^2  \theta^2 \,\mathrm{d}A = \theta^4 \int_{A_{0,e}} z^2 \,\mathrm{d}A = \theta^4 I_0 = \left(\frac{a_e}{a_0}\right)^2 I_0,
$$
which shows that, in this case, $I_e(a_e)$ is a degree-two polynomial.

\subsection{Mass Matrix}

The stiffness-consistent mass matrix distributes the mass according to the element's shape functions $\mathbf{N}_e(x)$ as
$$\mathbf{M}_e (a_e) = a_e \rho_e \mathbf{L}_e^\mathrm{T} \mathbf{T}_e^\mathrm{T} \int_{0}^{\ell_e} \mathbf{N}_e(x)^\mathrm{T}\mathbf{N}_e(x) \, \mathrm{d}x \mathbf{T}_e \mathbf{L}_e,$$
where $\rho_e \in \mathbb{R}_{>0}$ is the constant element density. This gives us the term $a_e\mathbf{M}_{j,e}^{\langle 1\rangle}$ in \eqref{eq:mass}.
}

\newpage
\bibliography{liter.bib}
\bibliographystyle{siamplain}

\end{document}